 \newcounter{cdef}[section]%
 \newcounter{cthm}[section]%
 \newcounter{cbsp}[section]%
\newcommand{\Dchaintwo}[4]{
\rule[-3\unitlength]{0pt}{8\unitlength}
\begin{picture}(14,5)(0,3)
\put(1,2){\ifthenelse{\equal{#1}{l}}{\circle*{2}}{\circle{2}}}
\put(2,2){\line(1,0){10}}
\put(13,2){\ifthenelse{\equal{#1}{r}}{\circle*{2}}{\circle{2}}}
\put(1,5){\makebox[0pt]{\scriptsize #2}}
\put(7,4){\makebox[0pt]{\scriptsize #3}}
\put(13,5){\makebox[0pt]{\scriptsize #4}}
\end{picture}}
\newcommand{\Dchainthree}[6]{
% p2  p3  p4  p5  p6
% x-------x-------x
\rule[-3\unitlength]{0pt}{8\unitlength}
\begin{picture}(26,5)(0,3)
\put(1,2){\ifthenelse{\equal{#1}{l}}{\circle*{2}}{\circle{2}}}
\put(2,2){\line(1,0){10}}
\put(13,2){\ifthenelse{\equal{#1}{m}}{\circle*{2}}{\circle{2}}}
\put(14,2){\line(1,0){10}}
\put(25,2){\ifthenelse{\equal{#1}{r}}{\circle*{2}}{\circle{2}}}
\put(1,5){\makebox[0pt]{\scriptsize #2}}
\put(7,4){\makebox[0pt]{\scriptsize #3}}
\put(13,5){\makebox[0pt]{\scriptsize #4}}
\put(19,4){\makebox[0pt]{\scriptsize #5}}
\put(25,5){\makebox[0pt]{\scriptsize #6}}
\end{picture}}
\newcommand{\Dtriangle}[7]{
%         p3
%         x
%     p5 / \ p6
%       /   \
%   p2 x-----x p4
%         p7
\rule[-3\unitlength]{0pt}{12\unitlength}
\begin{picture}(18,7)(0,3)
\put(4,4){\ifthenelse{\equal{#1}{l}}{\circle*{2}}{\circle{2}}}
\put(5,4){\line(1,0){8}}
\put(14,4){\ifthenelse{\equal{#1}{r}}{\circle*{2}}{\circle{2}}}
\put(4.4472,4.8944){\line(1,2){4.1056}}
\put(9,14){\ifthenelse{\equal{#1}{t}}{\circle*{2}}{\circle{2}}}
\put(13.5528,4.8944){\line(-1,2){4.1056}}
\put(2,3){\makebox[0pt][r]{\scriptsize #2}}
\put(9,17){\makebox[0pt]{\scriptsize #3}}
\put(16,3){\makebox[0pt][l]{\scriptsize #4}}
\put(6,9){\makebox[0pt][r]{\scriptsize #5}}
\put(12.5,9){\makebox[0pt][l]{\scriptsize #6}}
\put(9,1){\makebox[0pt]{\scriptsize #7}}
\end{picture}}
\newcommand{\Dchainfour}[8]{
% p2  p3  p4  p5  p6 p7 p8
% x-------x-------x-----x
\rule[-3\unitlength]{0pt}{5\unitlength}
\begin{picture}(38,5)(0,3)
\put(1,2){\ifthenelse{\equal{#1}{1}}{\circle*{2}}{\circle{2}}}
\put(2,2){\line(1,0){10}}
\put(13,2){\ifthenelse{\equal{#1}{2}}{\circle*{2}}{\circle{2}}}
\put(14,2){\line(1,0){10}}
\put(25,2){\ifthenelse{\equal{#1}{3}}{\circle*{2}}{\circle{2}}}
\put(26,2){\line(1,0){10}}
\put(37,2){\ifthenelse{\equal{#1}{4}}{\circle*{2}}{\circle{2}}}
\put(1,5){\makebox[0pt]{\scriptsize #2}}
\put(7,4){\makebox[0pt]{\scriptsize #3}}
\put(13,5){\makebox[0pt]{\scriptsize #4}}
\put(19,4){\makebox[0pt]{\scriptsize #5}}
\put(25,5){\makebox[0pt]{\scriptsize #6}}
\put(31,4){\makebox[0pt]{\scriptsize #7}}
\put(37,5){\makebox[0pt]{\scriptsize #8}}
\end{picture}}
\newcommand{\bigDchainfour}[8]{
% bigp2  bigp3  p4  p5  p6  p7  p8
% x-------------x-------x-------x
\rule[-3\unitlength]{0pt}{5\unitlength}
\begin{picture}(42,5)(0,3)
\put(1,2){\ifthenelse{\equal{#1}{1}}{\circle*{2}}{\circle{2}}}
\put(2,2){\line(1,0){14}}
\put(17,2){\ifthenelse{\equal{#1}{2}}{\circle*{2}}{\circle{2}}}
\put(18,2){\line(1,0){10}}
\put(29,2){\ifthenelse{\equal{#1}{3}}{\circle*{2}}{\circle{2}}}
\put(30,2){\line(1,0){10}}
\put(41,2){\ifthenelse{\equal{#1}{4}}{\circle*{2}}{\circle{2}}}
\put(1,5){\makebox[0pt]{\scriptsize #2}}
\put(9,4){\makebox[0pt]{\scriptsize #3}}
\put(17,5){\makebox[0pt]{\scriptsize #4}}
\put(23,4){\makebox[0pt]{\scriptsize #5}}
\put(29,5){\makebox[0pt]{\scriptsize #6}}
\put(35,4){\makebox[0pt]{\scriptsize #7}}
\put(41,5){\makebox[0pt]{\scriptsize #8}}
\end{picture}}
\newcommand{\Dthreefork}[8]{
%             x p7
%          p5/
%   p2 p3 p4/
%   x------x
%           \
%         p6 \
%             x p8
\rule[-9\unitlength]{0pt}{12\unitlength}
\begin{picture}(28,12)(0,9)
\put(2,10){\ifthenelse{\equal{#1}{l}}{\circle*{2}}{\circle{2}}}
\put(3,10){\line(1,0){10}}
\put(14,10){\ifthenelse{\equal{#1}{m}}{\circle*{2}}{\circle{2}}}
\put(15,10){\line(1,1){7}}
\put(15,10){\line(1,-1){7}}
\put(22,18){\ifthenelse{\equal{#1}{t}}{\circle*{2}}{\circle{2}}}
\put(22,2){\ifthenelse{\equal{#1}{b}}{\circle*{2}}{\circle{2}}}
\put(2,12){\makebox[0pt]{\scriptsize #2}}
\put(8,11){\makebox[0pt]{\scriptsize #3}}
\put(14,12){\makebox[0pt]{\scriptsize #4}}
\put(19,16){\makebox[0pt][r]{\scriptsize #5}}
\put(19,4){\makebox[0pt][r]{\scriptsize #6}}
\put(24,17){\makebox[0pt][l]{\scriptsize #7}}
\put(24,2){\makebox[0pt][l]{\scriptsize #8}}
\end{picture}}
\newcommand{\Drightofway}[9]{
%             x p7
%          p5/|
%   p2 p3 p4/ |
%   x------x  | p8
%           \ |
%         p6 \|
%             x p9
\rule[-9\unitlength]{0pt}{12\unitlength}
\begin{picture}(28,12)(0,9)
\put(2,10){\ifthenelse{\equal{#1}{l}}{\circle*{2}}{\circle{2}}}
\put(3,10){\line(1,0){10}}
\put(14,10){\ifthenelse{\equal{#1}{m}}{\circle*{2}}{\circle{2}}}
\put(15,10){\line(1,1){7}}
\put(15,10){\line(1,-1){7}}
\put(22,18){\ifthenelse{\equal{#1}{t}}{\circle*{2}}{\circle{2}}}
\put(22,2){\ifthenelse{\equal{#1}{b}}{\circle*{2}}{\circle{2}}}
\put(22,3){\line(0,1){14}}
\put(2,12){\makebox[0pt]{\scriptsize #2}}
\put(8,11){\makebox[0pt]{\scriptsize #3}}
\put(14,12){\makebox[0pt]{\scriptsize #4}}
\put(19,16){\makebox[0pt][r]{\scriptsize #5}}
\put(19,4){\makebox[0pt][r]{\scriptsize #6}}
\put(24,18){\makebox[0pt][l]{\scriptsize #7}}
\put(23,10){\makebox[0pt][l]{\scriptsize #8}}
\put(24,1){\makebox[0pt][l]{\scriptsize #9}}
\end{picture}
}
\newcounter{cthm}%
\newlength{\mpb}
\newcommand{\Aut }{\mathrm{Aut}}
\newcommand{\Hom }{\mathrm{Hom}}
\newcommand{\End }{\mathrm{End}}
\newcommand{\cB }{\mathcal{B}}
\newcommand{\cC }{\mathcal{C}}
\newcommand{\cR }{\mathcal{R}}
\newcommand{\cD }{\mathcal{D}}
\newcommand{\cI }{\mathcal{I}}
\newcommand{\cX }{\mathcal{X}}
\newcommand{\cY }{\mathcal{Y}}
\newcommand{\cW }{\mathcal{W}}
\newcommand{\gr }{\mathrm{gr}}
\newcommand{\lact }{.}
\newcommand{\ndN }{\mathbb{N}}
\newcommand{\ndZ }{\mathbb{Z}}
\newcommand{\ot }{\otimes }
\newcommand{\PBW }{Poincar\'e--Birkhoff--Witt }
\newcommand{\roots }{\boldsymbol{\Delta }}
\newcommand{\YD }{Yetter--Drinfel'd }
\newcommand{\ydH }{ {}^H_H\mathcal{YD}}
\newcommand{\ydD }{ {}^{G}_{G}\mathcal{YD}}
\newcommand{\id}{\mathrm{id}}
\newcommand{\al }{\alpha }
\newcommand{\btxandshort}[1]{and}%
\newcommand{\btxpagesshort}[1]{pp.}%
\newcommand{\Btxinshort}[1]{In}%
\newcommand{\btxphdthesis}[1]{phd-thesis}%
\newcommand{\btxeditorshort}[1]{Ed.}%
\newcommand{\btxeditorsshort}[1]{Eds.}%
\newcommand{\btxvolumeshort}[1]{vol.}%
\newcommand{\btxofseriesshort}[1]{ser.}%
\newcommand{\ffg }{\mathcal{F}_\theta^{G} }   % irreducible objects
\newcommand{\fiso }{\mathcal{X}_\theta }      % family of isoclasses
\newcommand{\rersys }[1]{\boldsymbol{\Delta }{}^{#1\,\mathrm{re}}}
\newcommand{\rsys }{\boldsymbol{\Delta }}
\newcommand{\ad }{\mathrm{ad}}
\newcommand{\hSL}[1]{\widehat{s\ell}(#1)}
\newtheorem{theorem}{\bf Theorem}[section]
\newtheorem{lemma}[theorem]{\bf Lemma}
\newtheorem{prop}[theorem]{\bf Proposition}
\newtheorem{cor}[theorem]{\bf Corollary}
\newtheorem{question}[theorem]{\bf Question}
\newtheorem{defn}[theorem]{\bf Definition}
\newtheorem{rem}[theorem]{\bf Remark}
\begin{document}

\title{Rank 4 finite-dimensional Nichols algebras of diagonal type in positive characteristic}

\author{
Jing Wang\thanks{supported by the fundamental Research Funds for the Central Universities (No.BLX201721) and the National Natural Science Foundation of China (No.11901034) }
\\wang\_jing619@163.com
\\Department of Mathematics
\\School of Science
\\Beijing Forestry University
\\100083 Beijing, China}
\date{}
\maketitle
%%%%%%%%%%%%%%%%%%%%%%%%%%%%%%%%%%%%%%%%%%%%%%%%%%%%%%%%%%%%%%%%%%%%%%%%%%%%%z
%%%%%%%%%%%%%%%%%%%%%%%%%%%%%%%%%%%%%%%%%%%%%%%%%%%%%%%%%%%%%%%%%%%%%%%%%%%%%z

\begin{abstract}
Nichols algebras are fundamental objects in the construction of quantized enveloping algebras and in the classification of pointed Hopf algebras by lifting method of Andruskiewitsch and Schneider. Arithmetic root systems are invariants of Nichols algebras of diagonal type with a certain finiteness property.
In the present paper, all rank 4 Nichols algebras of diagonal type with a finite arithmetic root system over fields of arbitrary characteristic are classified. Our proof uses the classification of the finite arithmetic root systems of rank 4.

Key Words: Hopf algebra, Nichols algebra, Cartan graph, arithmetic root system, Weyl groupoid
\end{abstract}
%%%%%%%%%%%%%%%%%%%%%%%%%%%%%%%%%%%%%%%%%%%%%%%%%%%%%%%%%%%%%%%%%%%%%%%%%%%%%z
%%%%%%%%%%%%%%%%%%%%%%%%%%%%%%%%%%%%%%%%%%%%%%%%%%%%%%%%%%%%%%%%%%%%%%%%%%%%%z

\section*{Introduction}
The theory of Nichols algebras is relatively young, but it has interesting applications to other research fields such as Kac-Moody Lie superalgebras~\cite{inp-Andr14} and conformal field theory~\cite{Semi-2011,Semi-2012,Semi-2013}. Besides, it plays an important role in quantum groups~\cite{inp-Andr02,a-AndrGr99,inp-AndrSchn02, a-Schauen96}.

The theory of Nichols algebras is motivated by the Hopf algebra theory. In any area of mathematics the classification of all objects is very important. In Hopf algebra theory, the classification of all finite dimensional Hopf algebras is a tough question\cite{inp-Andr02}. The structure of Nichols algebras appears naturally in the classification of pointed Hopf algebras in the following way.
Given a Hopf algebra $H$, consider its coradical filtration $$H_0\subset H_1\subset \ldots $$ such that $H_0$ is a Hopf subalgebra of $H$ and the associated graded coalgebra $$\gr H=\bigoplus _iH_i/H_{i-1}.$$ Then $\gr H$  is a graded Hopf algebra, since the coradical $H_0$ of $H$ is a Hopf
subalgebra. In addition, consider a projection $\pi: \gr H\to H_{0}$; let $R$ be the algebra of coinvariants of $\pi$. Then, by a result of Radford and Majid, $R$ is a braided Hopf algebra and $\gr H$ is the bosonization (or
biproduct) of $R$ and $H_{0}$: $\gr H \simeq  R\# H_{0}$.
The principle of the "Lifting method" introduced in~\cite{a-AndrSchn98,inp-AndrSchn02} is first to study $R$, then to transfer the information to $\gr H$ via bosonization, and finally to lift to $H$.
The braided Hopf algebra $R$ is generated by the vector space $V$ of $H_0$-coinvariants of $H_1/H_0$, namely \textit{Nichols algebra} $\cB(V)$ generated by $V$ \cite{a-AndrSchn98} in commemoration of W.~Nichols who started to study these objects as bialgebras of type one in~\cite{n-78}.
%%%%%%%%%%%%%%%%%%%%%by N.~Andruskiewitsch and H.-J.~Schneider
Nichols algebras can be described in many different but alternative ways, see for example~\cite{l-2010,maj05,Rosso98,w1987,w1989}.

The crucial step to classify pointed Hopf algebras is to determine all Nichols algebras $\cB(V)$ is finite dimensional.
 N.~Andruskiewitsch stated the following question.
\begin{question} \textbf{(N.~Andruskiewitsch~\cite{inp-Andr02})}\label{quse:classification} \it ~
 Given a braiding matrix $(q_{ij})_{1\leq i,j\leq \theta} $ whose entries are roots of $1$, when $\cB(V)$ is finite-dimensional, where $V$ is a vector space with basis $x_1,\dots ,x_{\theta}$ and braiding $c(x_i\otimes x_j)=q_{ij}(x_j\otimes x_i)$? If so,
compute $\dim_\Bbbk \cB(V)$, and give a ''nice'' presentation by generators and relations.
\end{question}
\noindent
Several authors obtained the classification result for infinite and finite dimensional Nichols algebra of Cartan type, see~\cite{a-AndrSchn00,a-Heck06a,Rosso98}.
I.~Heckenberger determined all finite dimensional Nichols algebra of diagonal type~\cite{a-Heck04d,a-Heck04bb,a-Heck09}, that is when $V$ is the direct sum of 1-dimensional \YD modules over $H_{0}$.
The generators and relations of such Nichols algebras were also given~\cite{Ang1,Ang2}.

The methods developed in the study of the generalizations of Lie algebras are useful to analyze Nichols algebras~\cite{b-Bahturin92}. V.~Kharchenko proved that any Hopf algebra generated by skew-primitive and group-like elements has a restricted \PBW basis~\cite[Theosrem~2]{a-Khar99}. Note that V.~Kharchenko results can apply to Nichols algebras of diagonal type. Motivated by the close relation to Lie theory, I.~Heckenberger~\cite{a-Heck06a} defined the arithmetic root system and Weyl groupoid for Nichols algebras $\cB(V)$ of diagonal type.
Late, M.~Cuntz, I.~Heckenberger and H.~Yamane developed the combinatorial theory of
these two structures~\cite{c-Heck09b,Y-Heck08a}.
Then the theory of root systems and Weyl groupoids was carried out in more general Nichols algebras~\cite{a-AHS08,HS10,a-HeckSchn12a}.
Further, all finite Weyl groupoids were classified in~\cite{c-Heck12a,c-Heck14a}.
Those constructions are very important theoretical tools for the classification of Nichols algebra $\cB(V)$.

With the classification result, N.~Andruskiewitsch and H.-J.~Schneider~\cite{a-AndrSchn05} obtained a classification theorem about finite-dimensional pointed Hopf algebras under some technical assumptions. Based on such successful applications,
the analyze to Nichols algebras over arbitrary fields is crucial and has also potential applications.
Towards this direction, new examples of Nichols algebras in positive characteristic and a combinatorial formula to study the relations in Nichols algebras were found~\cite{clw}.
Over fields of positive characteristic,
rank 2 and rank 3 finite dimensional Nichols algebras of diagonal type were listed in~\cite{WH-14,W-17}.
In this paper, we give the complete classification result of rank 4 case.
Besides, the notations and conventions in~\cite{WH-14,W-17} are followed and several results from these papers will be used.

The paper is organized as follows.
Section~\ref{se:Pre} is devoted to preliminaries.
In Section~\ref{se:Rank4Cartan}, we explicitly characterize finite connected indecomposable Cartan graphs of rank 4.
In order to do that,
we introduce good $A_4$ neighborhood and good $B_4$ neighborhood
see Definitions~\ref{defA4} and ~\ref{defB4}.
In Theorem~\ref{Theo:goodnei},
we prove that every finite connected indecomposable Cartan graph of rank 4 contains a point which has at least one of the good neighborhoods. Theorem~\ref{Theo:goodnei} allows us to avoid complicated computations in the final proof of Theorem~\ref{theo:clasi}.
Finally, in Section~\ref{se:clasi} we formulate the classification Theorem~\ref{theo:clasi} and present all the possible generalized Dynkin diagrams of rank 4 braided vector spaces of diagonal type with a finite root system over arbitrary fields in Table~(\ref{tab.1}).
As a corollary of Theorem~\ref{theo:clasi},
all rank 4 finite-dimensional Nichols algebras of diagonal type in positive characteristic are given,
see Corollary~\ref{coro-cla}.

\section{Nichols algebras of diagonal type}\label{se:Pre}
In this section, we recall \YD modules, braided vector spaces, and their relations.
The main object of this paper is also presented.
For further details on these topics we refer to \cite{inp-Andr14,inp-Andr02,a-AndrGr99}

\subsection{\YD modules}
Let $\Bbbk$ be a field of characteristic $p> 0$.
Let $\Bbbk^*=\Bbbk\setminus \{0\}$, $\ndN_0=\ndN\bigcup \{0\}$, $\theta\in \ndN_0$, and $I=\{1,\dots,\theta\}$,.
We start by recalling the main objects.
\begin{defn}
Let $V$ a $\theta$-dimensional vector space over $\Bbbk$.
The pair $(V, c)$ is called a \textit{braided vector space}, if
$c\in \Aut(V\otimes V)$ is a solution of the braid equation,
that is
\begin{equation*}
 (c \otimes \id)(\id \otimes c)(c \otimes \id) = (\id \otimes c)(c \otimes \id)(\id\otimes c).
\end{equation*}
A braided vector space $(V, c)$ is termed \textit{of diagonal type}
if $V$ admits a basis $\{x_i | i\in I\}$ such that for all $i, j \in I$ one has
\begin{equation*}
  c(x_i \otimes x_j) = q_{ij}x_j \otimes x_i \quad \textit{for some} \quad q_{ij} \in \Bbbk^*.
\end{equation*}
\end{defn}
The matrix $(q_{ij})_{i,j\in I}$ is termed of the \textit{braiding matrix} of $V$. We say that the braiding matrix $(q_{ij})_{i,j\in I}$ is \textit{indecomposable} if for any $i\not=j$ there exists a sequence $i_1 = i, i_2, \dots, i_t = j$ of elements of $I$ such that $q_{i_si_{s+1}}q_{i_{s+1}i_s}\not= 1$, where $1\leq s\leq t-1$. In this paper, we mainly concern the braided vector spaces of diagonal type with an indecomposable braiding matrix.

\begin{defn}
Let $H$ be a Hopf algebra.
A \textit{\YD module} $V$ over $H$ is a left $H$-module with left action $.$ : $H\otimes  V \longrightarrow V$ and a left $H$-comodule
with left coaction $\delta_L : V \longrightarrow H \otimes V$ satisfying the compatibility condition
\begin{equation*}
\delta_L(h.v) = h_{(1)}v_{(-1)} \kappa (h_{(3)}) \otimes h_{(2)}.v_{(0)}, h \in H, v\in V.
\end{equation*}
We say that $V$ is \textit{of diagonal type} if $H=\Bbbk G$ and $V$ is a direct sum of one-dimensional \YD modules over the group algebra $\Bbbk G$, where $G$ is abelian.
\end{defn}
\indent
We denote by $\ydH$ the category of \YD modules over $H$, where morphisms preserve both the action and the coaction of $H$.
The category $\ydH$ is braided with braiding
\begin{equation}\label{def.brading}
  c_{V,W}(v\otimes w)=v_{(-1)}.w \otimes v_{(0)}
\end{equation}
for all $V, W\in \ydH$, $v\in V$, and $w\in W$.
Actually, the category $\ydH$ ia a braided monoidal category, where the monoidal structure is given by the tensor product over $\Bbbk$. Then any \YD module $V\in \ydH$ over $H$ admits a braiding $c_{V,V}$ and hence $(V, c_{V,V})$ is a braided vector space. Conversely, any braided vector space can be realized as a \YD module over some Hopf algebras if and only if the braiding is rigid \cite[Section 2]{T00}. Notice that \YD module structures on $V$ with different Hopf algebras can give the same braiding and not all braidings of $V$ are induced by the above Equation~(\ref{def.brading}).

If $H=\Bbbk G$ then we write $\ydD$ for the category of \YD modules over $\Bbbk G$ and say that $V\in \ydD$ is a \YD module over $G$. Notice that if $V\in \ydD$ is of diagonal type then $(V, c_{V,V})$ is a braided vector space of diagonal type. Any braided vector space of diagonal type is also a \YD module of diagonal type. Indeed, assume that $(V, c)$ is a braided vector space of diagonal type with an indecomposable braiding matrix $(q_{ij})_{i, j\in I}$ of a basis $\{x_i|i\in I\}$. Let $G_0$ be an abelian group generated by elements $\{g_i|i\in I\}$. Define the left coaction and left action by
\[
\delta_L(x_i)=g_i\ot x_i \in G_0\otimes V, \quad g_i\lact x_j=q_{ij}x_j\in V.
\]
 Then $V=\oplus_{i\in I}\Bbbk x_i$ and each $\Bbbk x_i$ is one-dimensional \YD modules over $G_0$. Hence $V$ is a \YD module of diagonal type over $G_0$.

\subsection{Nichols algebras of diagonal type}
Let $(V, c)$ be a $\theta$-dimensional braided vector space of diagonal type. In this section, we recall a definition of the Nichols algebra $\cB(V)$ generated by $(V, c)$.
In order to do that, we introduce one more notion in the category $\ydH$.

\begin{defn}
  Let $H$ be a Hopf algebra.
  A \textit{braided Hopf algebra} in $\ydH$ is a $6$-tuple $\cB=(\cB, \mu, 1, \delta, \epsilon, \kappa_B)$,
  where $(B, \mu, 1)$ is an algebra in $\ydH$ and a coalgebra $(B, \delta, \epsilon)$ in $\ydH$, and $\kappa_B: B\rightarrow B$ is a morphism in $\ydH$ such that $\delta$, $\epsilon$ and $\kappa_B$ satisfy $ \kappa_B(b^{(1)})b^{(2)}=b^{(1)}\kappa_B(b^{(2)})=\epsilon(b) 1$, where we define $\delta(b)=b^{(1)}\otimes b^{(2)}$ as the coproduct of $\cB$ to avoid the confusion.
\end{defn}

\begin{defn}
  The \textit{Nichols algebra} generated by $V\in \ydH$ is defined as the quotient
  \[
  \cB(V)=T(V)/\cI(V)=(\oplus_{n=0}^{\infty} V^{\otimes n})/\cI(V)
  \]
  where $\cI(V)$ is the unique maximal coideal among all coideals of $T(V)$ which are contained in $\oplus_{n\geq 2}V^{\otimes n}$.
  Nichols algebra $\cB(V)$ is said to be~\textit{of diagonal type} if $V$ is a \YD module of diagonal type.
  The dimension of $V$ is the \textit{rank} of Nichols algebra $\cB(V)$.
\end{defn}
Note that if $V\in \ydH$ and $V$ is an algebra in $\ydH$ then $B\otimes B$ is an algebra in $\ydH$ with the product given by
\begin{equation}\label{eq-alg}
  (a\otimes b)(c\otimes d)=a(b_{(-1)}\lact c)\otimes b_{(0)}d,
\end{equation}
for all $a, b, c, d\in V$, where $.$ denotes the left action of $H$ on $V$.

The \textit{tensor algebra} $T(V)$ admits a natural structure of a \YD module and an algebra structure in $\ydH$. It is then a braided Hopf algebra in $\ydH$ with coproduct $\delta(v)=1\otimes v+ v\otimes 1\in T(V)\otimes T(V)$ and counit $\epsilon(v)=0$ for all $v\in V$ such that $\delta$ and $\epsilon$ are the algebra morphisms. The antipode of $T(V)$ exists, see \cite[Section 2.1]{inp-AndrSchn02}. Notice that the product defined by Equation (\ref{eq-alg}) on $T(V)$ is the unique algebra structure such that $\delta(v)=1\otimes v+ v\otimes 1\in T(V)\otimes T(V)$ for all $v\in V$. The coproduct can be extended from $V$ to $T(V)$. For example, for all $v, w\in V$ one gets (we write the elements of $T(V)$ without the tensor product sign for brevity)
 \begin{align*}
 \begin{split}
 \delta(vw)=&\delta(v)\delta(w)\\
           =&(1\otimes v+ v\otimes 1)(1\otimes w+ w\otimes 1)\\
           =& 1\otimes vw+ v_{(-1)}.w\otimes v_{(0)}+v\otimes w+vw\otimes 1.
\end{split}
\end{align*}
Let $(I_i)_{i\in I}$ be the family of all ideals of $T(V)$ contained in $\oplus_{n\geq 2}V^{\otimes n}$, i.e.
\[\delta(I_i)\subset I_i\otimes T(V)+ T(V)\otimes I_i.\]
 Then the ideal $\cI(V):=\sum_{i\in I}I_i$ is the largest element in $(I_i)_{i\in I}$.
%(the coradical of the $T(V)$ is $\Bbbk$ and use a result of Takeuchi 5.2.10 --in the book of S.Montgomery Hopf algebras and their actions on rings, CBMS Lecture Notes 82,Amer.Math.Soc.,1993.)
Hence $\cB(V)$ is a braided Hopf algebra in $\ydH$. As proved in \cite[Propssition 3.2.12]{a-AndrGr99}, Nichols algebra $\cB(V)$ is the unique $\ndN_0$-graded braided Hopf algebra generated by $V$ in $\ydH$ with homogenous components $\cB(V)(0)=\Bbbk$, $\cB(V)(1)=V$, and $P(\cB(V))=V$, where $P(\cB(V))$ is the space of primitive elements of $\cB(V)$.

\subsection{Weyl groupoids}
In this section, we recall the notations of semi-Cartan graphs, root systems and Weyl groupoids. We mainly follow the terminology from \cite{c-Heck09a},\cite{Y-Heck08a}. See also~\cite{WH-14} and~\cite{W-17}.
%\bf{\cite[\S 1.1]{b-Kac90}}
\begin{defn} \it
A \textit{generalized Cartan matrix} is a matrix $A =(a_{ij})_{i,j\in I}$ with integer entries such that
\begin{itemize}
\itemsep=0pt
\item $a_{ii}=2$ and $a_{jk}\le 0$ for any $i,j,k\in I$ with
    $j\not=k$,
\item  if $a _{ij}=0$ for some $i,j\in I$, then $a_{ji}=0$.
\end{itemize}
\end{defn}
\noindent A generalized Cartan matrix $A\in \ndZ ^{I \times I}$ is \textit{decomposable} if there exists a nonempty proper subset $I_1\subset I$ such that $a_{ij}=0$ for any $i \in I_1$ and $j \in I\setminus I_1$. We say that $A$ is  \textit{indecomposable} if $A$ is not decomposable.

\begin{defn}
  Let $\cX$ be a non-empty set and $A^X =(a_{ij}^X)_{i,j \in I}$ a generalized Cartan matrix for all $X\in \cX$.
  For any $i\in I$ let $r_i : \cX \to \cX$, $X \mapsto r(i,X)$, where $r: I\times \cX \to \cX$ is a map.
  The quadruple
  \[\cC = \cC (I, \cX, r, (A^X)_{X \in \cX})
  \]
  is called a \textit{semi-Cartan graph} if
  $r_i^2 = \id_{\cX}$ for all $i \in I$,  and $a^X_{ij} = a^{r_i(X)}_{ij}$ for all $X\in \cX$ and $i,j\in I$.
  We say that a semi-Cartan graph $\cC$ is~\textit{indecomposable}
  if $A^X$ is indecomposable for all $X\in \cX$.
\end{defn}
\noindent For the sake of simplicity,
the elements of the set $\{r_i(X), i\in I\}$ are termed the \textit{neighbors} of $X$ for all $X\in \cX$.
The cardinality of $I$ is termed the \textit{rank} of $\cC$ and the elements of $\cX$ are the \textit{points} of $\cC$.

\begin{defn}
The \textit{exchange graph} of $\cC$ is a labeled non-oriented graph with vertices set $\cX$
and edges set $I$, where two vertices $X, Y$ are connected by an edge $i$ if and only if $X\not=Y$ and $r_i(X)=Y$ (and $r_i(Y)=X$).
We display one edge with several labels instead of several edges for simplification. A semi-Cartan graph $\cC$ is said to be \textit{connected} if its exchange graph is connected.
\end{defn}
\noindent

For the remaining part of this section, we assume that $\cC = \cC (I, \cX, r, (A^X)_{X \in \cX})$ is a connected semi-Cartan graph. %By \cite[lemma 4.5]{c-Heck09b} we obtain that $\cC$ is indecomposable if there exists $x\in \cX$ such that $A^X$ is indecomposable.
Let $(\alpha_i)_{i\in I}$ be the standard basis of $\ndZ^I$. For all $X\in \cX$ let
\begin{equation*}%\label{eq-si}
s_i^X\in \Aut(\ndZ^I), ~~
s_i^X \alpha _j=\alpha_j-a_{ij}^X \alpha_i.
\end{equation*}
for all $j\in I$.
Let $\cD(\cX, I)$ be the category such that Ob$\cD(\cX, I)=\cX$ and morphisms $\Hom(X,Y)=\{(Y,f,X)|f\in \End(\ndZ ^I)\}$ for $X, Y\in \cX$ with the composition $(Z,g,Y)\circ (Y,f,X)=(Z, gf, X)$ for all $ X, Y, Z\in \cX$, $f,g\in \End(\ndZ ^I)$. Let \textit{$\cW(\cC)$} be the smallest subcategory of $\cD(\cX, I)$, where the morphisms are generated by $(r_i(X), s_i^X, X)$, with $i\in I$, $X\in \cX$. From now on, we write $s_i^X$ instead of $(r_i(X), s_i^X, X)$, if no confusion is possible. Notice that all generators $s_i^X$ are reflections and hence are invertible. Then $\cW(\cC)$ is a groupoid.

For any category $\cD$ and any object $X$ in $\cD$ let $\Hom(\cD, X)=\cup_{Y\in \cD}\Hom(Y,X)$.
\begin{defn}
For all $X\in \cX$, the set
\begin{equation}\label{eq-realroots}
  \rersys{X}=\{\omega\alpha_i \in \ndZ^I|\omega \in \Hom(\cW(\cC),X)\}
\end{equation}
 is called the set of real roots of $\cC$ at $X$. The elements of $\rersys{X}_{\boldsymbol{+}}=\rersys{X}\cap \ndN_0^I$ are called positive roots and those of $\rersys{X}\cap -\ndN_0^I$ negative roots, denoted by $\rersys{X}_{\boldsymbol{-}}$.
If the set $\rersys{X}$ is finite for all $X\in \cX$ then we say that $\cC$ is finite.
\end{defn}

\begin{defn}%\label{def.rootsystem}
We say that $\cR=\cR(\cC,(\rsys ^{X})_{X\in \cX})$ is a \textit{root system of type $\cC$} if for all $X \in \cX $, the sets $\rsys ^X $ are the subsets of $\ndZ ^I$ such that
\begin{itemize}
\itemsep=0pt
\item $\rsys ^X=(\rsys^ X\cap \ndN _0^I)\cup -(\rsys ^X\cap \ndN _0^I)$.
\item $\rsys ^X\cap \ndZ \al _i=\{\al _i,-\al _i\}$ for all $i\in I$.
\item $s_i^X(\rsys ^X)= \rsys ^{r_i(X)}$ for all $i \in I$.
\item $(r_i r_j)^{m_{ij}^X}(X) = X$ for any $i,j \in I$ with $i\not=j$ where $m_{ij}^X= |\rsys^X\cap (\ndN _0\al _i + \ndN _0 \al_j)|$ is finite.
\end{itemize}
\end{defn}
\noindent We say that $\cW(R):=\cW(\cC)$ is the groupoid of $\cR$. As in \cite[Definition 4.3]{c-Heck09b} we say that $\cR$ is \textit{reducible} if there exist non-empty disjoint subsets of $I',I''\subset I$ such that $I=I'\cup I''$ and $a_{i j}=0$ for all $i\in I'$, $j\in I''$ and
  \[
   \rsys ^{X}=\Big(\rsys ^{X}\cap \sum _{i\in I'}\ndZ \al _i\Big)\cup
  \Big(\rsys ^{X}\cap \sum _{j\in I''}\ndZ \al _j\Big)\qquad
  \text{for all}\quad X\in \cX.
  \]
In this case, we write $\cR =\cR |_{I_1}\oplus \cR |_{I_2}$. If $\cR \not=\cR |_{I_1}\oplus \cR |_{I_2}$ for all non-empty disjoint subsets $I_1,I_2\subset I$, then $\cR$ is termed \textit{irreducible}.
\begin{defn}
   Let $\cR=\cR(\cC,(\rsys ^{X})_{X\in \cX})$ be a root system of type $\cC$. We say that $\cR$ is~\textit{finite} if $\rsys ^{X}$ is finite for all $X\in \cX$.
\end{defn}

Let $\cR=\cR(\cC,(\rsys ^{X})_{X\in \cX})$ be a root system of type $\cC$. We recall some properties of $\cR$ from \cite{c-Heck09b} and \cite{c-Heck12a}.
%~\cite[Corollary~2.9]{c-Heck12a}
\begin{lemma}\label{lem:jik}
 Let $X\in \cX$, $k \in \ndZ$, and $i, j\in I$ such that $i\not= j$. Then $\alpha _j + k\alpha_i\in \rersys{X}$ if and only if $0\leq k\leq -a_{ij}^X$.
\end{lemma}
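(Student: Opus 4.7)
My plan is to prove the two implications separately, leveraging the groupoid action of $\cW(\cC)$ on $\ndZ^I$ together with the standard fact that real roots of a semi-Cartan graph lie in $\ndN_0^I \cup (-\ndN_0^I)$.

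For the $\Rightarrow$ direction, suppose $\alpha_j + k\alpha_i = \omega\alpha_l$ for some $\omega \in \Hom(\cW(\cC), X)$ and some $l \in I$. Positivity, together with the fact that the $\alpha_j$-coefficient equals $1 > 0$, forces $\alpha_j + k\alpha_i \in \ndN_0^I$, so $k \ge 0$. For the upper bound, I would post-compose $\omega$ with the reflection $s_i^X$, a morphism $X \to r_i(X)$ in $\cW(\cC)$, to produce $s_i^X\circ\omega \in \Hom(\cW(\cC), r_i(X))$ whose image on $\alpha_l$ is
\[
s_i^X(\alpha_j + k\alpha_i) = \alpha_j - a_{ij}^X\alpha_i - k\alpha_i = \alpha_j + (-a_{ij}^X - k)\alpha_i \in \rersys{r_i(X)}.
\]
Applying positivity again, the $\alpha_j$-coefficient is $1 > 0$, so the $\alpha_i$-coefficient $-a_{ij}^X - k$ must be non-negative, yielding $k \le -a_{ij}^X$.

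For the $\Leftarrow$ direction, the endpoints come for free. The identity $\id_X \in \Hom(X, X)$ gives $\alpha_j = \id_X\alpha_j \in \rersys{X}$, and $s_i^{r_i(X)} \in \Hom(r_i(X), X)$ (using $r_i^2 = \id_{\cX}$) together with $a_{ij}^{r_i(X)} = a_{ij}^X$ (the semi-Cartan graph axiom) gives $\alpha_j - a_{ij}^X\alpha_i = s_i^{r_i(X)}\alpha_j \in \rersys{X}$. For intermediate $k \in \{1, \dots, -a_{ij}^X - 1\}$ I would restrict to the rank-$2$ sub-semi-Cartan graph on $\{i, j\}$ based at $X$ and appeal to the description of the $\alpha_j$-root string in rank $2$ recalled in \cite{c-Heck09b}: the chain $\alpha_j, \alpha_j + \alpha_i, \dots, \alpha_j - a_{ij}^X\alpha_i$ is unbroken, and each entry is realized as the image of a simple root under a specific alternating word in $s_i$ and $s_j$ along the $\{i,j\}$-sub-exchange graph of $X$.

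The main obstacle is precisely the intermediate case. Direct construction only yields the two endpoints, and ruling out \emph{gaps} in the chain requires a genuinely rank-$2$ combinatorial input rather than a one-line reflection computation. Combined with the symmetry $s_i(\alpha_j + k\alpha_i) = \alpha_j + (-a_{ij}^X - k)\alpha_i$, which pairs the chain about its midpoint, the finite rank-$2$ Weyl-groupoid theory of \cite{c-Heck09b} closes the gap and yields the claim.
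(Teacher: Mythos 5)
The paper offers no proof of this lemma at all: it is stated as one of the properties ``recalled from \cite{c-Heck09b} and \cite{c-Heck12a}'', so there is no internal argument to compare yours against, and your proposal in fact supplies more detail than the paper does. Your treatment of the ``only if'' direction and of the two endpoints $k=0$ and $k=-a_{ij}^X$ of the ``if'' direction is correct and complete, with one attribution you should fix: the positivity statement $\rersys{X}\subseteq \ndN_0^I\cup(-\ndN_0^I)$ is \emph{not} a property of semi-Cartan graphs --- it is precisely the first axiom singling out Cartan graphs in Definition~\ref{def.Weylgroupoid}. Here it must instead be derived from the standing hypothesis that a root system $\cR$ of type $\cC$ exists: every generator $s_i^Z$ maps $\rsys^Z$ onto $\rsys^{r_i(Z)}$ and $\alpha_l\in\rsys^Y$, so $\rersys{Y}\subseteq\rsys^Y$ for every point $Y$, and then the axiom $\rsys^Y=(\rsys^Y\cap\ndN_0^I)\cup-(\rsys^Y\cap\ndN_0^I)$ gives positivity. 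With that repair your reflection computation $s_i^X(\alpha_j+k\alpha_i)=\alpha_j+(-a_{ij}^X-k)\alpha_i$ does yield both bounds. For the intermediate values of $k$ you correctly isolate the only genuinely nontrivial point, namely that the root string has no gaps, and you defer it to the rank-two theory of \cite{c-Heck09b}; this is exactly the external input the paper itself relies on, together with the fact (also needed, and worth stating) that $\rsys^X\cap(\ndZ\alpha_i+\ndZ\alpha_j)$ is a finite rank-two root system for the restriction of $\cC$ to $\{i,j\}$, which is guaranteed by the finiteness of $m_{ij}^X$ in the root-system axioms. So your route is the expected one and buys a self-contained proof of everything except the unbroken-string step, which remains a citation --- the same footing as the paper.
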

 Notice that the finiteness of $\cR$ does not mean that $\cW(\cR)$ is also finite, since the set $\cX$ might be infinite.
 %\cite[Lemma~2.11]{c-Heck09b}
\begin{lemma}\label{lem:finite}
   Let $\cC = \cC (I, \cX, r, (A^X)_{X \in \cX})$ be a connected semi-Cartan graph and $\cR=\cR(\cC,(\rsys ^{X})_{X\in \cX})$ is a root system of type $\cC$. Then the following are equivalent.
   \begin{itemize}
   \itemsep=0pt
     \item[$(1)$] $\cR$ is finite.
     \item[$(2)$] $\rsys^{X}$ is finite for some $X\in \cX$.
     \item[$(3)$] $\cC$ is finite.
     \item[$(4)$] $\cW(\cR)$ is finite.
   \end{itemize}
\end{lemma}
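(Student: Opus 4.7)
The plan is to run the cycle $(1) \Rightarrow (2) \Rightarrow (3) \Rightarrow (4) \Rightarrow (1)$. Several of these are routine from the root system axioms, while two steps rely on deeper structural results of Cuntz--Heckenberger \cite{c-Heck09b, c-Heck12a}.

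First I would dispose of the easy implications. The step $(1) \Rightarrow (2)$ is immediate from the definition of a finite root system. For $(4) \Rightarrow (3)$, finiteness of $\cW(\cR)$ makes every hom-set $\Hom(\cW(\cC), X)$ finite; since $|I|<\infty$, the set $\rersys{X} = \{\omega\al_i : \omega \in \Hom(\cW(\cC), X),\ i\in I\}$ is finite at every $X$, so $\cC$ is finite. For $(2) \Rightarrow (1)$ the key is the axiom $s_i^X(\rsys^X) = \rsys^{r_i(X)}$: each reflection is a bijection on root sets, so by connectedness of $\cC$ a composition of reflections along a path in the exchange graph yields a bijection $\rsys^X \to \rsys^Y$ for any two points, and finiteness at one $X$ propagates to all $Y$. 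Iterating the same axiom starting from $\al_j \in \rsys^Z$ along any morphism $\omega \in \Hom(\cW(\cC),Y)$ also gives the containment $\rersys{Y} \subseteq \rsys^Y$, which I record for use below.

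The substantive direction is $(3) \Rightarrow (4)$. I would represent morphisms of $\cW(\cC)$ by reduced words in the generators $s_i^X$, show that the length of a reduced word ending at a given point is bounded by $|\rersys{X}_{\boldsymbol{+}}|$ (each additional reflection produces a new positive real root, in the spirit of Lemma~\ref{lem:jik}), and conclude that only finitely many reduced words exist once the real-root set is finite. Connectedness of $\cC$ then forces $\cX$ itself to be finite, whence $\cW(\cR)$ is a finite groupoid. This is the combinatorial core and is drawn from the theory developed in \cite{c-Heck09b, c-Heck12a}.

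Finally, $(4) \Rightarrow (1)$ invokes the identification $\rsys^X = \rersys{X}$ valid for connected semi-Cartan graphs admitting a root system with a finite Weyl groupoid: every positive root is real. Together with the finiteness of $\rersys{X}$ established in $(4) \Rightarrow (3)$, this yields $\rsys^X$ finite at every $X$, and hence $\cR$ is finite. The anticipated obstacle is precisely this identification $\rsys^X = \rersys{X}$, together with the combinatorial bound on reduced words used in $(3) \Rightarrow (4)$; the remaining arguments are either definitional unpacking or iterated application of the bijectivity of $s_i^X$.
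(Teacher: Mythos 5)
The paper itself does not prove this lemma; it is recalled from \cite{c-Heck09b} and \cite{c-Heck12a}, so the only question is whether your argument is self-contained and correct. The ingredients are the right ones, but the logical wiring does not close. You announce the cycle $(1)\Rightarrow(2)\Rightarrow(3)\Rightarrow(4)\Rightarrow(1)$, yet the implications you actually argue are $(1)\Rightarrow(2)$, $(2)\Rightarrow(1)$, $(3)\Rightarrow(4)$, $(4)\Rightarrow(3)$ and $(4)\Rightarrow(1)$. This makes $\{(1),(2)\}$ and $\{(3),(4)\}$ into two equivalence clusters joined only by the one-way arrow $(4)\Rightarrow(1)$; no implication from $(1)$ or $(2)$ into $(3)$ or $(4)$ is ever established, so the four statements are not shown to be equivalent --- and the missing link is exactly the delicate direction that the paper's remark preceding the lemma draws attention to. The repair is already contained in your text: the containment $\rersys{Y}\subseteq \rsys^{Y}$ that you ``record for use below'' gives $(1)\Rightarrow(3)$ at once, since finiteness of every $\rsys^{Y}$ then forces finiteness of every $\rersys{Y}$, which is the definition of $\cC$ being finite. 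Together with $(2)\Rightarrow(1)$ and $(3)\Rightarrow(4)$ this closes the loop; you need to say so explicitly.

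Two smaller points. First, in $(3)\Rightarrow(4)$ the fact that a reduced word produces pairwise distinct positive real roots is not Lemma~\ref{lem:jik} but Proposition~\ref{prop.allposroots}; citing that makes the bound $\ell(\omega)\le|\rersys{X}_{\boldsymbol{+}}|$ precise, and the rest of your counting (finitely many words of bounded length over the alphabet $I$, connectedness bounding $|\cX|$ by $|\Hom(\cW(\cC),X)|$) is fine. Second, in $(4)\Rightarrow(1)$ you invoke the identification $\rsys^{X}=\rersys{X}$. Beware that the paper's own version of this fact, the final clause of Proposition~\ref{prop.allposroots}, is stated under the hypothesis that $\cR$ is already finite, so quoting it there is circular; you would need the longest-element argument of \cite{Y-Heck08a,c-Heck09b} in the form that assumes only that $\cW(\cR)$ is finite (a maximal-length $\omega$ sends every $\alpha_i$ to a negative root, whence $\rsys^{X}\cap\ndN_0^I=\{\beta_1,\dots,\beta_{\ell(\omega)}\}$). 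Alternatively, once $(1)\Rightarrow(3)$ is inserted as above, the implication $(4)\Rightarrow(1)$ becomes redundant and can simply be dropped.
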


Recall that $\cC$ is a connected semi-Cartan graph. Then we get the following.
%\cite[Proposition 4.6]{c-Heck09b}
\begin{prop}\label{prop.indecom}
  Let $\cR=\cR(\cC,(\rsys ^{X})_{X\in \cX})$ be a root system of type $\cC$. Then the following are equivalent.
  \begin{itemize}
  \itemsep=0pt
    \item[$(1)$] There exists $X\in \cX$ such that $A^X$ is indecomposable
    \item[$(2)$] The semi-Cartan graph $\cC$ is indecomposable.
   \end{itemize}
    If $\cR$ is finite then the semi-Cartan graph $\cC$ is indecomposable if and only if the root system $\cR$ is irreducible.
\end{prop}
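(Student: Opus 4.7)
The plan is to treat the two equivalences separately. For (1)$\Leftrightarrow$(2), since ``$\cC$ indecomposable'' means $A^X$ indecomposable for \emph{every} $X\in\cX$, direction (2)$\Rightarrow$(1) is trivial, and I focus on (1)$\Rightarrow$(2). I would prove a stronger contrapositive: if $A^Y$ is decomposable with partition $I = I_1 \sqcup I_2$, then $A^Z$ is decomposable with the \emph{same} partition for every $Z \in \cX$. Connectedness of the exchange graph of $\cC$ reduces this, by induction on the distance from $Y$, to the single step: decomposability of $A^Y$ under $I_1 \sqcup I_2$ implies decomposability of $A^{r_j(Y)}$ under $I_1 \sqcup I_2$, for every $j \in I$.

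For the induction step, assume WLOG $j \in I_1$ and fix $k \in I_1$, $l \in I_2$; the goal is $a_{kl}^{r_j(Y)} = 0$. If $k = j$, the semi-Cartan axiom $a_{jl}^{r_j(Y)} = a_{jl}^Y$ handles this. Otherwise I would use Lemma~\ref{lem:jik} to reduce to showing $\al _l + \al _k \notin \rersys{r_j(Y)}$, and the identity $\rersys{r_j(Y)} = s_j^Y(\rersys{Y})$ (immediate from the definition of real roots together with $s_j^{r_j(Y)} = s_j^Y$) to reduce further to showing $s_j^Y(\al _l + \al _k) = \al _l + \al _k + m\al _j \notin \rersys{Y}$, where $m = -a_{jk}^Y \geq 0$ and the vanishing $a_{jl}^Y = 0$ was used. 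The trick is to apply a further reflection $s_l^Y$: the symmetry axiom $a_{ij}=0 \Leftrightarrow a_{ji}=0$ together with the decomposability of $A^Y$ yield $a_{lk}^Y = a_{lj}^Y = 0$, so $s_l^Y$ fixes $\al _k$ and $\al _j$ and sends the vector to $-\al _l + \al _k + m\al _j$. Since $l \neq j, k$, this has strictly mixed signs and so cannot belong to $\rsys^{r_l(Y)} \subseteq \ndN _0^I \cup (-\ndN _0^I)$; as $s_l^Y$ preserves real roots, neither does the preimage, completing the induction.

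For the finite-case equivalence, ($\Leftarrow$) is immediate from the definition of reducibility, which forces $a_{ij}^X = 0$ for $i \in I'$, $j \in I''$ and all $X$, so each $A^X$ is decomposable. For ($\Rightarrow$), assume $\cC$ is decomposable with partition $I_1 \sqcup I_2$; the first equivalence gives this decomposition of $A^X$ at every $X$. Consequently each reflection $s_i^X$ preserves the sublattices $\sum_{k\in I_1}\ndZ \al _k$ and $\sum_{l\in I_2}\ndZ \al _l$ (it fixes the generators on the complementary side because the relevant Cartan entries vanish), hence so does every morphism in $\cW(\cC)$, and therefore $\rersys{X} \subseteq \sum_{k\in I_1}\ndZ \al _k \cup \sum_{l\in I_2}\ndZ \al _l$. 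Invoking the standard fact from the Cuntz--Heckenberger theory (see~\cite{c-Heck09b}) that $\rsys^X = \rersys{X}$ whenever the root system is finite, the decomposition transfers to $\rsys^X$, yielding reducibility of $\cR$.

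The main obstacle I expect is verifying that the all-non-negative combination $\al _l + \al _k + m\al _j$ is not a real root at $Y$: the non-negative form blocks any direct sign argument, and the resolution is to bring in a second reflection that exploits the symmetry of the Cartan matrix to flip exactly one coordinate. The other non-routine input is the identification $\rsys^X = \rersys{X}$ under finiteness, which I would quote from the existing literature rather than reprove.
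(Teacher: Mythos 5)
Your proof is correct, and it is worth noting that the paper itself offers no proof of Proposition~\ref{prop.indecom}: like Lemmas~\ref{lem:jik} and~\ref{lem:finite} it is simply recalled from the Cuntz--Heckenberger literature, so there is no in-text argument to compare yours against step by step. Judged on its own, your argument is sound and essentially self-contained relative to what the paper does state. The strengthened induction hypothesis (the \emph{same} partition $I_1\sqcup I_2$ decomposes $A^Z$ at every point $Z$) is exactly what is needed to propagate along the connected exchange graph, and the two-reflection trick --- first $s_j^Y$ to reduce the claim $a_{kl}^{r_j(Y)}=0$ to excluding $\al_l+\al_k+m\al_j$ from $\rersys{Y}$, then $s_l^Y$ to flip only the $\al_l$-coordinate using $a_{lk}^Y=a_{lj}^Y=0$ --- correctly turns the sign axiom $\rsys^Z=(\rsys^Z\cap\ndN_0^I)\cup-(\rsys^Z\cap\ndN_0^I)$ into the required contradiction. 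Two steps you leave implicit and should spell out: (i) the containment $\rersys{Z}\subseteq\rsys^Z$, which is what actually lets you conclude that a mixed-sign vector is not a \emph{real} root (it follows by induction on word length from $\al_i\in\rsys^Z$ and $s_i^Z(\rsys^Z)=\rsys^{r_i(Z)}$); and (ii) in the finiteness part, that it is the \emph{uniform} partition, not merely pointwise decomposability, that makes every generator $s_i^X$ and hence every morphism of $\cW(\cC)$ preserve the two sublattices --- fortunately your strengthened induction supplies exactly this. Both are presentational rather than mathematical gaps. Finally, the identification $\rsys^X=\rersys{X}$ in the finite case need not be quoted from outside: it is stated in the paper as a consequence of Proposition~\ref{prop.allposroots} and the remark following it.
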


\begin{defn}\label{def.Weylgroupoid}
We say $\cC$ is a Cartan graph if the following hold:
\begin{itemize}
\itemsep=0pt
  \item For all $X\in \cX$ the set $\rersys{X}=\rersys{X}_{\boldsymbol{+}}\bigcup \rersys{X}_{\boldsymbol{-}}$.
  \item If $l_{mn}^Y:=|\rersys{Y}\cap (\ndN_0 \alpha_m+\ndN_0 \alpha_n)|$ is finite, then $(r_m r_n)^{l_{mn}^Y}(Y)=Y$, where $m, n\in I$, $Y\in \cX$.
\end{itemize}
In this case, $\cW(\cC)$ is called the Weyl groupoid of $\cC$.
\end{defn}

Let $\cR^{re}:=\cR(\cC,(\rersys{X})_{X\in \cX})$. Then
 $\cC$ is a Cartan graph if and only if $\cR^{re}$ is a root system of type $\cC$. Indeed, we get that $s_i^X(\rersys{X})=\rersys{r_i(X)}$ by Equation (\ref{eq-realroots}). For all $X\in \cX$, we obtain that $\rersys{X}=\rersys{X}_{\boldsymbol{+}}\cup \rersys{X}_{\boldsymbol{-}}$, since $\omega s_i^{r_i(X)}(\al_i)=-\omega(\al_i)$ for any $\omega \in \Hom(\cW(\cC),X)$.

The following proposition implies that if $\cR$ is a finite root system of type $\cC$ then $\cR=\cR^{re}$, namely, all roots are real and $\cR$ is uniquely determined by $\cC$.
 %~\cite[Proposition~2.12]{c-Heck09b}
 \begin{prop} \label{prop.allposroots}
  Let $\cR=\cR(\cC,(\rsys ^{X})_{X\in \cX})$ be a root system of type $\cC$. Let $X\in \cX$, $m\in \ndN _0$, and $i_1,\ldots ,i_m\in I$ such that
  \[\omega =\id_X s _{i_1} s_{i_2}\cdots s_{i_m}\in \Hom(\cW(\cC), X)\] and $\ell (\omega )=m$.
  Then the elements
  \[
    \beta _n=\id_X s_{i_1} s_{i_2} \cdots s_{i_{n-1}}(\alpha_{i_n})\in \rsys^ X\cap \ndN _0^I,
  \]
  are pairwise different, where $n\in \{1,2,\ldots ,m\}$ (and $\beta _1=\alpha _{i_1}$). Here,
  \[\ell(\omega)=\mathrm{min}\{m\in \ndN_0|\omega=\id_X s_{i_1}s_{i_2}\cdots s_{i_m}, i_1, i_2, \ldots, i_m\in I\}\]
   is the length of $\omega\in \Hom(\cW(\cC), X)$.
  In particular, if $\cR$ is finite and
  $\omega \in \Hom (\cW (\cC ))$ is the longest element,
  %see \cite[Cor.\,5]{a-HeckYam08},
  then
  \[
    \{\beta _n\,|\,1\le n\le \ell (\omega )=|\rsys ^{X}|/2\}=\rsys^ X\cap \ndN _0^I.
  \]
\end{prop}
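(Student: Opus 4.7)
The plan is to proceed by induction on $m = \ell(\omega)$, adapting the standard Coxeter-group argument to the groupoid setting. First, each $\beta_n$ lies in $\rsys^X$: starting from the simple root $\alpha_{i_n}$ in the root system at $Y_{n-1} = r_{i_{n-1}}\cdots r_{i_1}(X)$ and applying $s_{i_{n-1}}, \ldots, s_{i_1}$ in turn, the axiom $s_i^Z(\rsys^Z) = \rsys^{r_i(Z)}$ delivers $\beta_n \in \rsys^X$. The axiom $\rsys^X = (\rsys^X \cap \ndN_0^I) \cup -(\rsys^X \cap \ndN_0^I)$ then forces $\beta_n$ to be either positive or negative.

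The main step is to show every $\beta_n$ is positive. Suppose for contradiction that some $\beta_n$ is negative and fix the least such $n$; since $\beta_1 = \alpha_{i_1}$ is positive, $n \ge 2$. Inspect the chain
\[
  \alpha_{i_n}, \; s_{i_{n-1}}(\alpha_{i_n}), \; s_{i_{n-2}}s_{i_{n-1}}(\alpha_{i_n}), \; \ldots, \; \beta_n,
\]
taken in the respective root systems. It begins positive and ends negative, so the sign flips at some step. Because Lemma~\ref{lem:jik} gives $\rsys^{Z} \cap \ndZ\alpha_j = \{\pm\alpha_j\}$ and $s_j^Z$ alters only the $\alpha_j$-coefficient of a root (and non-negatively when applied to any positive root other than $\alpha_j$ itself), the sign can flip only if the intermediate value equals the simple root being reflected. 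Hence there exists $1 \le k \le n-1$ with $\sigma(\alpha_{i_n}) = \alpha_{i_{n-k}}$, where $\sigma := s_{i_{n-k+1}}\cdots s_{i_{n-1}}$. The intertwining relation $\sigma s_{i_n} \sigma^{-1} = s_{i_{n-k}}$ in $\cW(\cC)$ then yields
\[
  s_{i_{n-k}} s_{i_{n-k+1}} \cdots s_{i_n} = s_{i_{n-k+1}} \cdots s_{i_{n-1}},
\]
and substitution into the reduced word for $\omega$ produces an expression of length $m - 2$, contradicting $\ell(\omega) = m$.

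For pairwise distinctness, suppose $\beta_n = \beta_{n'}$ with $n < n'$; rearranging the equation gives $(s_{i_{n+1}}\cdots s_{i_{n'-1}})(\alpha_{i_{n'}}) = -\alpha_{i_n}$, a negative root. But any contiguous subword of a reduced expression is itself reduced (else substituting back would shorten $\omega$), so the positivity just established, applied to the reduced subword $s_{i_{n+1}}\cdots s_{i_{n'}}$, forces $(s_{i_{n+1}}\cdots s_{i_{n'-1}})(\alpha_{i_{n'}})$ to be positive, a contradiction. Finally, if $\cR$ is finite and $\omega$ is the longest morphism in $\Hom(\cW(\cC), X)$, then $\ell(\omega s_i) < \ell(\omega)$ for every $i$, which by the same reduction forces $\omega$ to send every simple root to a negative root; every element of $\rsys^X \cap \ndN_0^I$ must therefore appear as some $\beta_n$, and distinctness then yields $\ell(\omega) = |\rsys^X \cap \ndN_0^I| = |\rsys^X|/2$.

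The main obstacle is the careful bookkeeping of source and target objects when invoking the intertwining identity in $\cW(\cC)$: the relation $\sigma s_{i_n} \sigma^{-1} = s_{i_{n-k}}$ holds verbatim for the underlying linear maps on $\ndZ^I$, but the substitution inside the morphism $\omega$ must be verified position-by-position so that sources and targets compose correctly in the groupoid, and the claim that contiguous subwords of a reduced word are themselves reduced likewise requires the right handling of base points.
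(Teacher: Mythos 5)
The paper does not actually prove Proposition~\ref{prop.allposroots}; it is recalled without proof from the Weyl-groupoid literature (\cite{c-Heck09b}, \cite{c-Heck12a}, going back to \cite{Y-Heck08a}), so your argument can only be measured against the standard proof in those sources. Your skeleton reproduces that proof correctly: positivity of the $\beta_n$ via a deletion-condition argument, distinctness by applying positivity to a contiguous (hence reduced) subword, and the longest-element statement as a corollary.

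There is, however, one genuine gap, and it sits exactly at the point you wave off as bookkeeping. The identity $\sigma s_{i_n}\sigma^{-1}=s_{i_{n-k}}$ does \emph{not} ``hold verbatim for the underlying linear maps on $\ndZ^I$.'' In the classical Coxeter setting the conjugation formula $w s_\al w^{-1}=s_{w(\al)}$ is automatic because $s_\al$ is the reflection in $\al$ with respect to a $W$-invariant bilinear form; here $s_i^Z=\id-\al_i\otimes f_i^Z$ with $f_i^Z(\al_j)=a_{ij}^Z$ is defined purely from the local Cartan matrix $A^Z$, and there is no invariant form. Knowing $\sigma(\al_{i_n})=\al_{i_{n-k}}$ only gives $\sigma s_{i_n}\sigma^{-1}=\id-\al_{i_{n-k}}\otimes(f_{i_n}\circ\sigma^{-1})$; the substantive claim is that the functional $f_{i_n}\circ\sigma^{-1}$ equals $f_{i_{n-k}}$ at the relevant object, i.e.\ that the Cartan entries transform correctly along $\sigma$. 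This is a real lemma about root systems of type $\cC$ (one of the basic lemmas of \cite{Y-Heck08a}); it can be derived from the axioms, e.g.\ by noting that $h:=(\sigma s_{i_n}\sigma^{-1})\circ s_{i_{n-k}}=\id-\al_{i_{n-k}}\otimes\phi$ with $\phi(\al_{i_{n-k}})=0$, that $h^{-1}=\id+\al_{i_{n-k}}\otimes\phi$, and that both $h$ and $h^{-1}$ carry simple roots to roots, forcing $\phi(\al_j)\le 0$ and $\phi(\al_j)\ge 0$ simultaneously, hence $\phi=0$. You must either supply this argument or cite it; without it the length-reduction step, and hence the whole positivity proof, is unsupported. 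Two smaller points: the final paragraph needs $\ell(\omega s_i)\ne\ell(\omega)$, which follows from $\det s_i^Z=-1$ (length has a parity), and the jump from ``$\omega$ sends every simple root to a negative root'' to ``every positive root occurs among the $\beta_n$'' still needs a line --- if $\gamma\in\rsys^X\cap\ndN_0^I$ were missed, then $\omega^{-1}(\gamma)$ would be a nonnegative combination of simple roots, so $\gamma=\omega(\omega^{-1}(\gamma))$ would be a nonnegative combination of negative roots, a contradiction.
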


\begin{rem}%\label{re:irre}
 If $\cC$ is a finite Cartan graph then $\cR$ is finite and hence $$\cR^{re}=\cR^{re}(\cC,(\rersys{X})_{X\in \cX})$$ is the unique root system of type $\cC$ by Proposition~\ref{prop.allposroots}, that is, $\cR$ is uniquely determined by $\cC$.
\end{rem}
%%%%%%%%%%%%%%%%%%%%%%%%%%%%%%%%%%%%%%%%%%%%%%%%%%%%%%%%%%%%%%%%%%%%%%%%%%%%%z
\subsection{Cartan graphs for Nichols algebras of diagonal type}
In this section we attach a semi-Cartan graph to a tuple of finite-dimensional \YD modules under some finiteness conditions.
Let $G$ be an abelian group. Let $\ffg$ be the set of $\theta$-tuples of finite-dimensional irreducible objects in $\ydD$ and $\fiso^G$ be the set of $\theta $-tuples of isomorphism classes of finite-dimensional irreducible objects in $\ydD$. For any $(M_1, \dots, M_{\theta})\in \ffg$, write $[M]:=([M_1], \dots, [M_{\theta}])\in \fiso^G$ the corresponding isomorphism class of $(M_1, \dots, M_{\theta})$.

Assume that $V_M=\oplus_{i\in I}\Bbbk x_i\in \ydD$ is a \YD module of diagonal type over $G$, where $\{x_i|i\in I\}$ is a basis of $V$. Then there exists a matrix $(q_{ij})_{i,j\in I}$ such that $\delta(x_i)=g_i\otimes x_i$ and $g_i. x_j=q_{ij}x_j$ for all $i,j\in I$. We fix that $M=(\Bbbk x_1, \Bbbk x_2, \dots, \Bbbk x_{\theta})\in \ffg$ is a tuple of one-dimensional \YD over $G$ and $[M]\in \fiso^G$. We say that the matrix $(q_{ij})_{i,j\in I}$ is the \textit{braiding matrix of $M$}. Recall that the matrix is independent of the basis $\{x_i|i\in I\}$ up to permutation of $I$. We say $\cB(V_M)=\cB(\oplus_{i=1}^{n}\Bbbk x_i)$ is the Nichols algebra of the tuple $M$, denoted by $\cB(M)$.

Recall that the adjoint representation $\ad$ of a Nichols algebra $\cB(V)$ from \cite{a-AndrSchn98} is the linear map $\ad_c:$ $V\rightarrow \End(\cB(V))$
\[
\ad_{c} x(y)=\mu (\id-c)(x\otimes y)=xy-(x_{(-1)}\lact y)x_{(0)}
\]
for all $x\in V$, $y\in \cB(V)$, where $\mu$ is the multiplication map of $\cB(V)$ and $c$ is defined by Equation (\ref{def.brading}).
In particular, the braided commutator $ad_{c}$ of $\cB(M)$ takes the form
\[ad_{c}x_i(y)=x_i y-(g_i\lact y) x_i~ \textit{for all} ~i\in I,~ y\in \cB(M).\]

In order to construct a semi-Cartan graph to $M$, we recall some finiteness conditions from \cite{inp-AndrSchn02} and \cite{HS10}.
%\cite[Definition~6.4]{HS10}
\begin{defn}
Let $i\in I$. We say that $M$ is \textit{$i$-finite}, if for any $j\in I\setminus \{i\}$, $(\ad_{c} x_i)^m (x_j)=0$ for some $m\in \ndN$.
\end{defn}
%~\cite[Lemma~3.7]{inp-AndrSchn02}
\begin{lemma}\label{le:aijM}
  For any $i,j\in I$ with $i\not=j$,s
  the following are equivalent.
  \begin{itemize}
    \item $(m+1)_{q_{ii}}(q_{ii}^mq_{ij}q_{ji}-1)=0$ and $(k+1)_{q_{ii}}(q_{ii}^kq_{ij}q_{ji}-1)\not=0$ for all $0\leq k<m$.
    \item $(ad_{c}x_i)^{m+1}(x_j)=0$ and $(ad_{c}x_i)^m(x_j)\not=0$ in $\cB (V)$.
  \end{itemize}
  Here $(n)_q:=1+q+\cdots+q^{n-1}$,
  which is $0$ if and only if $q^n=1$ for $q\not=1$ or $p|n$ for $q=1$. Notice that $(1)_q\not=0$ for any $q\in \ndN$.

\end{lemma}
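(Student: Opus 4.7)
The plan is to study the elements $z_n := (\ad_{c} x_i)^n(x_j)$ through their coproducts in the tensor algebra $T(V)$. First, an easy induction using $g_i \lact x_i = q_{ii} x_i$ and $g_i \lact x_j = q_{ij} x_j$ shows that $g_i \lact z_n = q_{ii}^n q_{ij} z_n$, which in turn yields the recursion
\begin{equation*}
z_{n+1} = x_i z_n - q_{ii}^n q_{ij}\, z_n x_i.
\end{equation*}

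The technical core is a computation of $\delta(z_n)$ in $T(V)$, carried out by induction using the recursion above together with the braided multiplicativity of $\delta$ as governed by Equation~(\ref{eq-alg}). The target formula has the shape
\begin{equation*}
\delta(z_n) = z_n \otimes 1 + 1 \otimes z_n + \sum_{k=1}^{n} a_{n,k}\, x_i^{k} \otimes z_{n-k},
\end{equation*}
where one verifies inductively that
\begin{equation*}
a_{n,k} = \binom{n}{k}_{\!q_{ii}} \prod_{\ell=0}^{k-1}\bigl(q_{ii}^{n-1-\ell} q_{ij} q_{ji} - 1\bigr).
\end{equation*}
In particular $a_{n,1} = (n)_{q_{ii}}(q_{ii}^{n-1} q_{ij} q_{ji} - 1)$, which is exactly the scalar appearing in the statement, and every $a_{n,k}$ with $k\ge 1$ contains this same factor (coming from the $\ell=0$ term in the product).

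To conclude, recall from \cite[Proposition~3.2.12]{a-AndrGr99} that $P(\cB(V)) = V$, so any homogeneous element of degree $\ge 2$ in $T(V)$ maps to zero in $\cB(V)$ if and only if its image is primitive. Proceeding by induction on $m$, assume $(k+1)_{q_{ii}}(q_{ii}^k q_{ij} q_{ji} - 1) \ne 0$ for all $0 \le k < m$, so that the induction hypothesis gives $z_k \ne 0$ in $\cB(V)$ for $0 \le k \le m$. Projecting $\delta(z_{m+1})$ to $\cB(V) \otimes \cB(V)$, if the scalar $(m+1)_{q_{ii}}(q_{ii}^m q_{ij} q_{ji} - 1)$ vanishes then every term $a_{m+1,k} x_i^k\otimes z_{m+1-k}$ with $k\ge 1$ vanishes simultaneously, so $z_{m+1}$ becomes primitive in $\cB(V)$ and hence is zero there. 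Conversely, if this scalar is non-zero, then $a_{m+1,1}\, x_i \otimes z_m$ is a non-zero summand of $\delta(z_{m+1})$ in $\cB(V) \otimes \cB(V)$, because $x_i$ and $z_m$ are non-zero and $x_i \otimes z_m$ is independent from the diagonal part $z_{m+1}\otimes 1 + 1\otimes z_{m+1}$; hence $z_{m+1}$ fails to be primitive and is non-zero in $\cB(V)$.

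The main obstacle is the explicit coproduct formula: one needs the precise inductive identity for $a_{n,k}$ so that a single scalar controls the primitivity of $z_n$ in $\cB(V)$. A secondary delicate point is justifying the linear independence of the $x_i^k \otimes z_{m+1-k}$ in $\cB(V) \otimes \cB(V)$ under the standing hypothesis; this requires knowing that $x_i, x_i^2, \ldots, x_i^{m+1}$ are all non-zero in $\cB(V)$, which itself follows inductively from the same non-vanishing conditions applied with $j=i$ or, more directly, from an iteration of the argument for the diagonal entries.
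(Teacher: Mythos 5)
The paper itself states this lemma without proof (it is invoked as a known result, essentially the coproduct computation of \cite{a-AndrSchn00}), so there is no internal proof to compare against. Your overall strategy --- compute $\delta\bigl((\ad_c x_i)^n(x_j)\bigr)$ in $T(V)$ and test primitivity in $\cB(V)$ via $P(\cB(V))=V$ --- is the standard route, and your coefficient formula for $a_{n,k}$ is correct up to an irrelevant sign. But the forward implication has a genuine gap. The scalar $(m+1)_{q_{ii}}(q_{ii}^{m}q_{ij}q_{ji}-1)$ may vanish because $(m+1)_{q_{ii}}=0$ while $q_{ii}^{m}q_{ij}q_{ji}\neq 1$; in that sub-case the $\ell=0$ factor you appeal to does \emph{not} vanish, and in particular $a_{m+1,m+1}=\prod_{\ell=0}^{m}(q_{ii}^{m-\ell}q_{ij}q_{ji}-1)$ is a product of factors that are all nonzero by the minimality hypothesis. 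So the assertion that ``every term $a_{m+1,k}\,x_i^{k}\otimes z_{m+1-k}$ with $k\geq 1$ vanishes simultaneously'' is false at the level of coefficients: already for $q_{ii}=-1$ and $q_{ij}q_{ji}\notin\{1,-1\}$ one has $a_{2,1}=0$ but $a_{2,2}=(q_{ij}q_{ji}-1)(-q_{ij}q_{ji}-1)\neq 0$.

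What actually saves the statement is different. The minimality hypothesis forces $(l)_{q_{ii}}\neq 0$ for $1\leq l\leq m$, so $(m+1)_{q_{ii}}=0$ means $q_{ii}$ is a primitive $(m+1)$-th root of unity (or $q_{ii}=1$ and $p=m+1$); then $\binom{m+1}{k}_{q_{ii}}=0$ for $1\leq k\leq m$ kills the intermediate coefficients, while the surviving term $a_{m+1,m+1}\,x_i^{m+1}\otimes x_j$ dies in $\cB(V)\otimes\cB(V)$ only because $x_i^{m+1}=0$ in $\cB(V)$ (it is primitive of degree $m+1\geq 2$ for the same binomial reason). Note that this is the opposite of your closing remark, which asks for $x_i^{m+1}\neq 0$: that non-vanishing holds, and is relevant, only in the converse direction, where $(m+1)_{q_{ii}}\neq 0$. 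Moreover, the linear-independence issue you flag there is resolved at once by the $\ndN_0^{\theta}\times\ndN_0^{\theta}$-grading of $\cB(V)\otimes\cB(V)$, since the terms $x_i^{k}\otimes z_{n-k}$ lie in pairwise distinct homogeneous components; no further non-vanishing of powers of $x_i$ is needed. With these two repairs the argument is complete.
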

Hence we get the following from Lemma~\ref{le:aijM}.
\begin{lemma}\label{lem:aij}
  Let $i\in I$.
  Then $M=(\Bbbk x_j)_{j\in I}$ is $i$-finite if and only if for any $j\in I\setminus\{i\}$ there is a non-negative integer $m$ satisfying $(m+1)_{q_{ii}}(q_{ii}^mq_{ij}q_{ji}-1)=0$.
\end{lemma}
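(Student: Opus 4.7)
The statement is essentially a repackaging of Lemma~\ref{le:aijM}: the hypothesis of $i$-finiteness is expressed as the existence of some integer killing a power of $\ad_c x_i$, while the conclusion is a purely arithmetic condition on the scalars $q_{ii}$ and $q_{ij}q_{ji}$. Since Lemma~\ref{le:aijM} already links the minimal such integer to the vanishing of $(k+1)_{q_{ii}}(q_{ii}^kq_{ij}q_{ji}-1)$, the plan is simply to pass between ``$(\ad_c x_i)^n(x_j)=0$ for some $n$'' and ``$(m+1)_{q_{ii}}(q_{ii}^mq_{ij}q_{ji}-1)=0$ for some $m\ge 0$'' by choosing the minimal witness on each side.

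For the forward direction, assume $M$ is $i$-finite and fix $j\in I\setminus\{i\}$. Since $x_j\neq 0$, the minimal integer $n\ge 1$ with $(\ad_c x_i)^n(x_j)=0$ satisfies $(\ad_c x_i)^{n-1}(x_j)\neq 0$. Setting $m=n-1\ge 0$ and applying Lemma~\ref{le:aijM} gives $(m+1)_{q_{ii}}(q_{ii}^mq_{ij}q_{ji}-1)=0$, which is the desired condition.

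For the converse, fix $j\in I\setminus\{i\}$ and let $m$ be the smallest nonnegative integer with $(m+1)_{q_{ii}}(q_{ii}^mq_{ij}q_{ji}-1)=0$. By the minimality, $(k+1)_{q_{ii}}(q_{ii}^kq_{ij}q_{ji}-1)\neq 0$ for every $0\le k<m$, so the first bullet of Lemma~\ref{le:aijM} is met with this $m$. The second bullet then yields $(\ad_c x_i)^{m+1}(x_j)=0$, and since $j$ was arbitrary this is exactly $i$-finiteness of $M$.

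There is essentially no obstacle beyond bookkeeping: the only care needed is the index shift between the exponent $n$ appearing in the definition of $i$-finiteness and the parameter $m$ appearing in the arithmetic identity, and the observation that in the forward direction the minimal $n$ is at least $1$ because $x_j\ne 0$, ensuring $m=n-1\in\ndN_0$.
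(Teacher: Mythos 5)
Your proof is correct and follows the same route as the paper, which simply asserts that the lemma follows from Lemma~\ref{le:aijM} without writing out the details; your minimal-witness argument in both directions is exactly the bookkeeping the paper leaves implicit.
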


Let $i\in I$. Assume that $M$ is $i$-finite. Let $(a_{ij}^{M})_{j\in I}\in \ndZ^I$ and $R_i(M)=({R_i(M)}_j)_{j\in I}$, where
\begin{align*}
  a_{ij}^M=&
  \begin{cases}
    2& \text{if $j=i$,}\\
    -\mathrm{max}\{m\in \ndN_0 \,|\,(\ad_c  x_i)^m(x_j)\not=0 \}& \text{if $j\not=i$.}
  \end{cases}
\end{align*}
\begin{equation}\label{eq-ri}
{R_i(M)}_i= \Bbbk y_i,\qquad
{R_i(M)}_j= \Bbbk(\ad_{c}x_i)^{-a_{ij}^M}(x_j),
\end{equation}
where $y_i\in (\Bbbk x_i)^*\setminus \{0\}$. If $M$ is not $i$-finite, then let $R_i(M)=M$.
Then $R_i(M)$ is a $\theta$-tuple of one-dimensional \YD modules over $G$.

Let
\[
\ffg(M)=\{R_{i_1} \cdots R_{i_n}(M)\in \ffg|\, n\in \ndN_0, i_1,\dots, i_n\in I\}
\]
and
\[
\fiso^G(M)=\{[R_{i_1} \cdots R_{i_n}(M)]\in \fiso^G |\,n\in \ndN_0, i_1,\dots, i_n\in I\}.
\]
\begin{defn}\label{defn-admitsallref}
  We say that $M$ \textit{admits all reflections} if $N$ is $i$-finite for all $N\in \ffg(M)$.
\end{defn}
Notice that the reflections depend only on the braiding matrix $(q_{ij})_{i,j\in I}$. We recall the notion of generalized Dynkin diagram for a braided vector spaces of diagonal type~\cite{a-Heck04e}.
\begin{defn} % % \label{def.Dyndia}
Let $V$ be a $\theta$-dimensional braided vector space of diagonal type with the braiding matrix $(q_{ij})_{i,j\in I}$. The $\textit{generalized Dynkin diagram}$ of $V$ is a non-directed graph $\cD$ with the following properties:
\begin{itemize}
\itemsep=0pt
\item there is a bijective map $\phi$ from $I$ to the vertices of $\cD$,
\item for all $i\in I$ the vertex $\phi (i)$ is labeled by $q_{ii}$,
\item  for all $i,j\in I$ with $i\not=j$,
 the number $n_{ij}$ of edges between $\phi (i)$ and $\phi (j)$ is either $0$ or $1$. If $q_{ij}q_{ji}=1$ then $n_{ij}=0$, otherwise $n_{ij}=1$ and the edge is labeled by $q_{ij}q_{ji}.$
\end{itemize}
\end{defn}
We say that the \textit{generalized Dynkin diagram of $M$} is the generalized Dynkin diagram of braided vector space $\oplus_{i\in I}M_i$.
Notice that the generalized Dynkin diagram of $M$ is connected if the braiding matrix of $M$ is indecomposable.

In more details, one can obtain the labels of the generalized Dynkin diagram of $R_i(M)=(R_i(M)_j)_{j\in I}$ by the following lemma.
\begin{lemma}\label{le:Dynkin}
Let $i\in I$.
Assume that $M$ is $i$-finite and let $a_{ij}:=a_{ij}^M$ for all $j\in I$.
Let $(q'_{jk})_{j,k\in I}$ be the braiding matrix of $R_i(M)$ with respect to $(y_j)_{j\in I}$.
Then
%the labels of the Dynkin diagram of $R_i(M)=(R_i(M)_j)_{j\in I}$ are
  \begin{gather*}
q_{jj}'=
 \begin{cases}
 q_{ii} & \text{if $j=i$},\\
 q_{jj}  & \text{if $j\not=i$, $q_{ij}q_{ji}=q_{ii}^{a_{ij}}$},\\
 q_{ii}q_{jj}{(q_{ij}q_{ji})^{-a_{ij}}} & \text{if $j\not=i$, $q_{ii}\in G'_{1-a_{ij}}$},\\
 q_{jj}{(q_{ij}q_{ji})^{-a_{ij}}} & \text{if $j\not=i$, $q_{ii}=1$},
 \end{cases}
 \end{gather*}
 \begin{gather*}
 q_{ij}'q_{ji}'=
 \begin{cases}
 q_{ij}q_{ji} & \text{if $j\not=i$, $q_{ij}q_{ji}=q_{ii}^{a_{ij}}$},\\
 q_{ii}^2(q_{ij}q_{ji})^{-1} & \text{if $j\not=i$, $q_{ii}\in G'_{1-a_{ij}}$},\\
 (q_{ij}q_{ji})^{-1} & \text{if $j\not=i$, $q_{ii}=1$},
 \end{cases}
 \end{gather*}
 and
 \begin{gather*}
q_{jk}'q_{kj}'=
 \begin{cases}
 q_{jk}q_{kj}  & \text{if $q_{ir}q_{ri}=q_{ii}^{a_{ir}}$, $r\in \{j, k\}$},\\
 q_{jk}q_{kj}(q_{ik}q_{ki}q_{ii}^{-1})^{-a_{ij}}& \text{if $q_{ij}q_{ji}=q_{ii}^{a_{ij}}$, $q_{ii}\in G'_{1-a_{ik}}$},\\
 q_{jk}q_{kj}(q_{ij}q_{ji})^{-a_{ik}}(q_{ik}q_{ki})^{-a_{ij}} &  \text{if $q_{ii}=1$,}\\
%                                                               &  \text{$p=1-a_{ik}$, $q_{ii}=1$, $q_{ik}q_{ki}\not=1$},\\
 q_{jk}q_{kj}q_{ii}^{2}(q_{ij}q_{ji}q_{ik}q_{ki})^{-a_{ij}} & \text{if $q_{ii}\in G'_{1-a_{ik}}$, $q_{ii}\in G'_{1-a_{ij}}$}.
  \end{cases}
 \end{gather*}
 for $j, k\not=i$, $j\not=k$.
Here, $G_n'$ denotes the set of primitive $n$-th roots of unity
in $\Bbbk$, that is $G'_n=\{q\in \Bbbk^*|\,\, q^n=1, q^k\not=1~\text{for all}~ 1\leq k < n\}$ for $n\in \ndN$.
 \end{lemma}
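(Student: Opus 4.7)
The plan is a direct calculation inside the category $\ydD$. The key general fact I will use is that, for one-dimensional $G$-homogeneous subobjects $\Bbbk u,\Bbbk v$ of \YD modules in $\ydD$ with $\delta(u)=g_u\ot u$ and $g\lact v=\chi_v(g)v$, the braiding of Equation~(\ref{def.brading}) reduces to $c(u\ot v)=\chi_v(g_u)v\ot u$; the diagonal braiding constant between $u$ and $v$ is thus $\chi_v(g_u)$. So the whole statement is reduced to identifying the $G$-degrees and $G$-characters of the generators of $R_i(M)$ and then evaluating the characters at the appropriate group elements, using $q_{rs}=\chi_s(g_r)$ throughout.

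I will compute these data directly from Equation~(\ref{eq-ri}). For $j=i$ the generator $y_i\in(\Bbbk x_i)^*\setminus\{0\}$ has degree $g_i^{-1}$ and character $\chi_i^{-1}$ by duality. For $j\ne i$, set $z_j:=(\ad_c x_i)^{-a_{ij}}(x_j)$. Since $\ad_c x_i(y)=x_iy-(g_i\lact y)x_i$ sends a $G$-homogeneous element of degree $g$ and character $\chi$ to one of degree $g_ig$ and character $\chi_i\chi$, iterating gives that $z_j$ is $G$-homogeneous of degree $g_i^{-a_{ij}}g_j$ and character $\chi_i^{-a_{ij}}\chi_j$. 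Evaluating characters at group elements now yields the generic formulas
\begin{align*}
q'_{ii}&=\chi_i^{-1}(g_i^{-1})=q_{ii},\qquad q'_{jj}=(\chi_i^{-a_{ij}}\chi_j)(g_i^{-a_{ij}}g_j)=q_{ii}^{a_{ij}^2}(q_{ij}q_{ji})^{-a_{ij}}q_{jj},\\
q'_{ij}q'_{ji}&=(\chi_i^{-a_{ij}}\chi_j)(g_i^{-1})\,\chi_i^{-1}(g_i^{-a_{ij}}g_j)=q_{ii}^{2a_{ij}}(q_{ij}q_{ji})^{-1},\\
q'_{jk}q'_{kj}&=q_{ii}^{2a_{ij}a_{ik}}(q_{ij}q_{ji})^{-a_{ik}}(q_{ik}q_{ki})^{-a_{ij}}q_{jk}q_{kj}\qquad (j,k\ne i,\ j\ne k).
\end{align*}

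The remaining work is to simplify each of these generic expressions under the three conditions identified by Lemma~\ref{le:aijM}: (a) if $q_{ij}q_{ji}=q_{ii}^{a_{ij}}$, substitute $q_{ii}^{a_{ij}}$ for $q_{ij}q_{ji}$ (or conversely) to cancel powers of $q_{ii}$; (b) if $q_{ii}\in G'_{1-a_{ij}}$, use $q_{ii}^{1-a_{ij}}=1$, which forces $q_{ii}^{a_{ij}}=q_{ii}$, $q_{ii}^{a_{ij}^2}=q_{ii}$, $q_{ii}^{2a_{ij}}=q_{ii}^{2}$, and more generally $q_{ii}^{a_{ij}^{\,r}}=q_{ii}$ for every $r\ge 1$; (c) if $q_{ii}=1$, all $q_{ii}$-powers drop out. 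In the fourth row of the $q'_{jk}q'_{kj}$ table the joint hypothesis $q_{ii}\in G'_{1-a_{ij}}\cap G'_{1-a_{ik}}$ forces $1-a_{ij}=1-a_{ik}$, hence $a_{ij}=a_{ik}$, by uniqueness of the order of a root of unity; this is what permits the exponents in the two factors $(q_{ij}q_{ji})$ and $(q_{ik}q_{ki})$ to be written as a common $-a_{ij}$, and it also gives $q_{ii}^{2a_{ij}^2}=q_{ii}^{2}$.

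The principal obstacle is not any single computation but the bookkeeping across the case analysis: each listed formula has to be matched against the generic expression above, and one must verify consistency in overlapping regimes. For instance, when both $q_{ij}q_{ji}=q_{ii}^{a_{ij}}$ and $q_{ii}\in G'_{1-a_{ij}}$ hold, the two prescriptions for $q'_{jj}$ must coincide, which reduces to the identity $q_{ii}^{1-a_{ij}^2}=(q_{ii}^{1-a_{ij}})^{1+a_{ij}}=1$; analogous consistency checks govern the overlapping sub-cases of the $q'_{jk}q'_{kj}$ table. Each individual simplification is elementary, but the collection of them is where care is required.
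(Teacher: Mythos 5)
Your proof is correct and complete. The paper states Lemma~\ref{le:Dynkin} without proof (it is recalled from the literature on reflections of tuples of one-dimensional Yetter--Drinfel'd modules), so there is no in-paper argument to compare against; your route --- identifying the $G$-degree $g_i^{-a_{ij}}g_j$ and character $\chi_i^{-a_{ij}}\chi_j$ of $(\ad_{c}x_i)^{-a_{ij}}(x_j)$ and the dual data $g_i^{-1},\chi_i^{-1}$ for $y_i$, reading off the generic braiding constants, and then specializing through the trichotomy supplied by Lemma~\ref{le:aijM} --- is the standard and expected one. All the simplifications check out, including the observation that $q_{ii}\in G'_{1-a_{ij}}\cap G'_{1-a_{ik}}$ forces $a_{ij}=a_{ik}$ and the consistency of the formulas on overlapping cases.
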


If $M$ admits all reflections, then we are able to construct a semi-Cartan graph $\cC(M)$ of $M$ by the following theorem.
\begin{theorem}\label{theo.regualrcar}
  Assume that $M$ admits all reflections.
  For all $X\in \fiso^G(M)$
  let \[[X]_{\theta}=\{Y\in \fiso^G(M)| \,\text{Y and X have the same generalized Dynkin diagram}\}.\]
  Let $\cY_{\theta}(M)=\{[X]_{\theta} |\, X\in \fiso^G(M)\}$
  and $A^{[X]_{\theta}}=A^X$ for all $X\in \fiso^G(M)$.
  Let $t: I\times \cY_{\theta}(M)\rightarrow \cY_{\theta}(M)$,
  $(i, [X]_{\theta})\mapsto [R_i(X)]_{\theta}$.
  Then the tuple
  \[
  \cC(M)=\{I, \cY_{\theta}(M), t, (A^Y)_{Y\in \cY_{\theta}(M)}\}
  \]
  is a connected semi-Cartan graph. We say that $\cC(M)$ is the semi-Cartan graph attached to $M$.
\end{theorem}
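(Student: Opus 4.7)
The plan is to verify, one by one, the axioms of a connected semi-Cartan graph for the tuple $\cC(M)$: namely that each $A^Y$ is a well-defined generalized Cartan matrix on the class $Y=[X]_\theta$, that $t_i^2=\id_{\cY_\theta(M)}$, that $a^Y_{ij}=a^{t_i(Y)}_{ij}$ for all $i,j\in I$, and that the exchange graph is connected.

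First I would check that the assignment $A^{[X]_\theta}:=A^X$ is well-defined and that $A^X$ is a generalized Cartan matrix. Lemma~\ref{le:aijM} shows that each entry $a^X_{ij}$ is determined purely by $q_{ii}$ and $q_{ij}q_{ji}$, and these are precisely the labels carried by the generalized Dynkin diagram of $X$. Hence $A^X$ is an invariant of the class $[X]_\theta$. The diagonal entries equal $2$ by the definition in Equation~(\ref{eq-ri}); the off-diagonal entries are non-positive by construction (being $-\max\{m\in\ndN_0\mid\dots\}$); and $a^X_{ij}=0$ is, via Lemma~\ref{le:aijM}, equivalent to $q_{ij}q_{ji}=1$, a symmetric condition in $i,j$, which gives $a^X_{ij}=0\iff a^X_{ji}=0$.

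Next I would check that $t\colon I\times\cY_\theta(M)\to\cY_\theta(M)$ is well-defined, i.e.\ that $[R_i(X)]_\theta$ depends only on $[X]_\theta$. Lemma~\ref{le:Dynkin} expresses every label of the generalized Dynkin diagram of $R_i(X)$ as an explicit function of $q_{ii}$, $q_{jj}$, $q_{ij}q_{ji}$, $q_{jk}q_{kj}$ and the Cartan entries $a^X_{ij}$, all of which are Dynkin-invariant by the previous step. Hence the Dynkin class of $R_i(X)$ depends only on that of $X$.

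Now I come to the heart of the argument, the involution property $t_i^2=\id$. The hypothesis that $M$ admits all reflections ensures $R_i$ is defined on the entire orbit $\ffg(M)$. I would apply Lemma~\ref{le:Dynkin} twice to compute the labels of the Dynkin diagram of $R_i(R_i(X))$ and verify they coincide with those of $X$. This is a case analysis following the branches of Lemma~\ref{le:Dynkin} (according to whether $q_{ij}q_{ji}=q_{ii}^{a_{ij}}$, $q_{ii}\in G'_{1-a_{ij}}$, or $q_{ii}=1$); in each branch one checks $q''_{ii}=q_{ii}$, $q''_{jj}=q_{jj}$, $q''_{ij}q''_{ji}=q_{ij}q_{ji}$, and $q''_{jk}q''_{kj}=q_{jk}q_{kj}$. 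In the same vein I would prove the diagonal invariance $a^X_{ij}=a^{t_i(X)}_{ij}$: using $q'_{ii}=q_{ii}$ together with the three cases for $q'_{ij}q'_{ji}$ produced by Lemma~\ref{le:Dynkin}, a direct substitution into the criterion of Lemma~\ref{le:aijM} gives the same integer $a^X_{ij}$. Finally, connectedness is immediate: every element of $\cY_\theta(M)$ is by definition of the form $[R_{i_1}\cdots R_{i_n}(M)]_\theta$, so it is joined to $[M]_\theta$ by the edge-sequence $i_n,\ldots,i_1$ in the exchange graph.

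The main obstacle I expect is the involution property $t_i^2=\id$: the computation is algorithmic but delicate, since one must track which of the three branches of Lemma~\ref{le:Dynkin} applies at each of the two reflections, and verify that every combination of branches collapses the twice-reflected labels back to the original ones. The invariance $a^X_{ij}=a^{t_i(X)}_{ij}$ is essentially a corollary of the same computation restricted to the first row, so once the involutivity is settled the rest of the verification is routine.
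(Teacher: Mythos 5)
Your proposal is correct, but it takes a genuinely different route at the key step. Where you propose to establish $t_i^2=\id$ and the invariance $a_{ij}^X=a_{ij}^{t_i(X)}$ by a direct two-fold application of Lemma~\ref{le:Dynkin} with a case analysis over its branches, the paper simply cites \cite[Theorem~3.12(2)]{a-AHS08} for the facts $R_i^2(N)=N$ and $a_{ij}^N=a_{ij}^{R_i(N)}$ whenever $N$ is $i$-finite, and deduces that $t_i$ is a reflection. The paper's citation buys brevity and in fact gives the stronger statement that $R_i^2(N)$ and $N$ coincide as tuples of \YD modules, not merely that their generalized Dynkin diagrams agree; your computation, if carried out, would be self-contained and would only establish equality of Dynkin diagrams, which is nevertheless exactly what is needed for $t_i^2=\id$ on $\cY_\theta(M)$. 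Note also that the logical order in your plan should be: first derive $a_{ij}^{R_i(X)}=a_{ij}^X$ from the labels of $R_i(X)$ via Lemma~\ref{le:aijM}, and only then apply Lemma~\ref{le:Dynkin} at the point $R_i(X)$, since the reflection formulas depend on the Cartan entries of the point being reflected. On the remaining points (well-definedness of $A^{[X]_\theta}$, the generalized Cartan matrix axioms via Lemma~\ref{lem:aij}, and connectedness from the construction of $\ffg(M)$) your argument matches the paper's; your explicit verification that $[R_i(X)]_\theta$ depends only on $[X]_\theta$ via Lemma~\ref{le:Dynkin} is in fact more careful than the paper, which leaves this implicit in the remark that reflections depend only on the braiding matrix.
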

\begin{proof}
Let $X\in \fiso^G(M)$. Since $M$ admits all reflections, we obtain that all entries of $A^X$ are finite. Hence $A^X$ is well-defined.  Moreover, if $a_{ij}^X=0$ then $a_{ji}^X=0$ by Lemma \ref{lem:aij}. Hence $A^X$ is a well-defined generalized Cartan matrix for all $X\in \fiso^G(M)$. For any $X, Y\in \fiso^G(M)$, if $X$ and $Y$ have the same generalized Dynkin diagram then $A^X=A^Y$ and hence $A^{[X]_{\theta}}=A^{[Y]_{\theta}}$. Then $A^{[X]_{\theta}}$ is well-defined for all $X\in \fiso^G(M)$. Hence $\{A^Y\}_{Y\in \cY_{\theta}(M)}$ is a family of generalized Cartan matrices. Besides, if $N$ is $i$-finite then $a_{ij}^N=a_{ij}^{R_i(N)}$ and $R^2_i(N)=N$ for all $N\in \fiso^G(M)$ by \cite[Theorem 3.12(2)]{a-AHS08}. Hence $t_i$ is a reflection map for all $i\in I$. Then $\cC(M)$ is a well-defined semi-Cartan graph. From the construction of the reflection $R_i$ by Equation (\ref{eq-ri}) we obtain that $\cC(M)$ is connected.
%Then $\{A^Y\}_{Y\in \fiso^G(M)}$ is a family of generalized Cartan matrices.Assume that the braiding matrix of $[X]_{\theta}$ is $(q_{ij})_{i,j\in I}$. If $a_{ij}^{[X]_{\theta}}=0$ then $q_{ij}q_{ji}=1$ by Lemma~\ref{lem:aij} and hence $a_{ji}^{[X]_{\theta}}=0$.
\end{proof}
Furthermore, one can attach a groupoid $\cW(M):=\cW(\cC(M))$ to $M$ if $M$ admits all reflections.

Notice that
Nichols algebra $\cB(M)$ is $\ndN_0^{\theta}$-graded with
$\deg M_i=\al_i $ for all $i\in I$.
Following the terminology in~\cite{HS10},
we say that the
Nichols algebra $\cB(M)$ is \textit{decomposable}
if there exists a totally ordered index set $(L,\le)$ and a sequence
$(W_l)_{l\in L}$ of finite-dimensional irreducible $\ndN _0^\theta $-graded objects in
$\ydD $
such that
\begin{equation}\label{eq-decom}
  \cB (M)\simeq
  \bigotimes _{l\in L}\cB (W_l).
\end{equation}
For each decomposition~(\ref{eq-decom}),
we define the set of~ \textit{positive roots} $\rsys^{[M]}_{+}\subset \ndZ^I$ and the set of~ \textit{roots} $\rsys^{[M]}\subset \ndZ^I$ of $[M]$ by
$$\rsys^{[M]}_{+}=\{\deg(W_l)|\, l\in L\}, \quad
\rsys^{[M]}=\rsys^{[M]}_{+}\cup-\rsys^{[M]}_{+}.$$
By~\cite[Theorem~4.5]{HS10} we obtain that
the set of roots $\rsys^{[M]}$ of $[M]$ does not depend on the choice of the decomposition.

\begin{rem}\label{rem-decom}
 If $\dim M_i=1$ for all $i\in I$,
 then the Nichols algebra $\cB(M)$ is decomposable based on the theorem of V.~Kharchenko~\cite[Theorem~2]{a-Khar99}.The set of roots of Nichols algebra $\cB(M)$ can be always defined and it is denoted by $\rsys^{[M]}$.
 If the set of roots $\rsys^{[M]}$ is finite then we can check that $M$ admits all reflections by \cite[Corollary 6.12]{HS10}.
\end{rem}
%  cor.CMgroupoid---->thm:rootofR_M
If $M$ admits all reflections and $\rsys^{[M]}$ is finite, then we can define a finite root system $\cR(M)(\cC(M),(\rsys^{[N]})_{N\in \ffg(M)})$ of type $\cC(M)$.
\begin{theorem} \label{thm:rootofR_M}
 Assume that $M$ admits all reflections. Then the following are equivalent.
\begin{itemize}
\itemsep=0pt
\item[$(1)$] $\rsys^{[M]}$ is finite.
\item[$(2)$] $\cC(M)$ is a finite Cartan graph.
\item[$(3)$] $\cW(M)$ is finite.
\item[$(4)$] $\cR(M):=\cR(M)(\cC(M),(\rsys^{[N]})_{N\in \ffg(M)})$ is finite.
\end{itemize}
In all cases, $\cR(M)$ is the unique root system of type $\cC(M)$.
\end{theorem}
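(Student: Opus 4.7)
The plan is to identify $\cR(M)$ with a root system of type $\cC(M)$ and then invoke Lemma~\ref{lem:finite} together with Proposition~\ref{prop.allposroots}. Under the hypothesis that $M$ admits all reflections, Theorem~\ref{theo.regualrcar} already furnishes the connected semi-Cartan graph $\cC(M)$; the remaining content of the theorem is to upgrade this to a finite Cartan graph whose (unique) root system is $\cR(M)$, and to match up the four finiteness conditions.

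First I would establish the implication $(1)\Rightarrow(2)$ together with the fact that $(\rsys^{[N]})_{N\in \ffg(M)}$ is a root system of type $\cC(M)$. Assuming $\rsys^{[M]}$ is finite, Remark~\ref{rem-decom} ensures each $N \in \ffg(M)$ is a tuple of one-dimensional \YD modules, so every $\rsys^{[N]}$ is well-defined via the Kharchenko decomposition. The axiom $\rsys^{[N]} = (\rsys^{[N]}\cap \ndN_0^I) \cup -(\rsys^{[N]}\cap \ndN_0^I)$ follows from the $\ndN_0^{\theta}$-grading, and $\rsys^{[N]}\cap \ndZ\alpha_i = \{\al_i,-\al_i\}$ from the fact that each one-dimensional component contributes exactly the simple root $\al_i$. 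The reflection compatibility $s_i^{[N]}(\rsys^{[N]}) = \rsys^{[R_i(N)]}$ is the substantive axiom; it is precisely the content of the structural reflection theorems of~\cite{HS10} describing how the reflections $R_i$ act on root degrees of Nichols algebras. The Coxeter-style axiom $(r_ir_j)^{m_{ij}^{[N]}}([N])=[N]$ with $m_{ij}^{[N]}$ finite then follows from the finiteness of rank-two subsystems. These checks simultaneously verify Definition~\ref{def.Weylgroupoid}, upgrading $\cC(M)$ to a Cartan graph.

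Once $(\rsys^{[N]})_{N\in \ffg(M)}$ is recognized as a root system of type $\cC(M)$, the remaining equivalences are formal: Lemma~\ref{lem:finite} applied to $\cR(M)$ yields at once $(1)\Leftrightarrow(2)\Leftrightarrow(3)\Leftrightarrow(4)$, since $\rsys^{[M]}$ is the root set at the distinguished point $[M]\in \fiso^G(M)$, and finiteness at one point is equivalent to finiteness of the whole Cartan graph and of the Weyl groupoid $\cW(M)=\cW(\cC(M))$. For the uniqueness assertion I would invoke Proposition~\ref{prop.allposroots}: when $\cR(M)$ is finite, every positive root is listed as $\beta_n=\id_{[N]}\, s_{i_1}\cdots s_{i_{n-1}}(\al_{i_n})$ for a reduced expression of the longest element, so every root is real and $\cR(M)=\cR^{re}(\cC(M),(\rersys{[N]})_{[N]\in \cY_\theta(M)})$, which is intrinsically determined by $\cC(M)$.

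The main obstacle is the reflection compatibility $s_i^{[N]}(\rsys^{[N]}) = \rsys^{[R_i(N)]}$; proving it from scratch would require developing a PBW theory of Nichols algebras and tracking how Lusztig-type reflection operators transform PBW generators of $\cB(N)$ into those of $\cB(R_i(N))$. Since this is exactly the content of~\cite[Theorem~4.5 and Corollary~6.12]{HS10}, I would cite these results rather than reprove them, and the remainder of the argument is a bookkeeping verification of the Cartan-graph and root-system axioms followed by direct applications of Lemma~\ref{lem:finite} and Proposition~\ref{prop.allposroots}.
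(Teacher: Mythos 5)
Your proposal is correct and follows essentially the same route as the paper: both arguments reduce the theorem to recognizing $\cR(M)=\cR(M)(\cC(M),(\rsys^{[N]})_{N\in \ffg(M)})$ as a root system of type $\cC(M)$ via the structural results of~\cite{HS10} (the paper cites Theorem~6.11 there, you cite Theorem~4.5 and Corollary~6.12 plus a sketch of the axiom checks), and then obtain the four equivalences from Lemma~\ref{lem:finite}, with uniqueness coming from Proposition~\ref{prop.allposroots}. The only cosmetic difference is that you condition the verification on finiteness of $\rsys^{[M]}$, whereas the decomposability of $\cB(N)$ and the root-system structure hold unconditionally once $M$ admits all reflections; this does not affect the validity of the argument.
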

\begin{proof}
   Since $M$ is a $\theta$-tuple of one-dimensional \YD modules, we obtain that the Nichols algebra $\cB(M)$ generated by $V_M$ is decomposable and hence $\rsys^{[M]}$ is defined. Then $\cR(M)=\cR(M)(\cC(M),(\rsys^{[N]})_{N\in \ffg(M)})$ is the root system of type $\cC(M)$ by \cite[Theorem 6.11]{HS10}. Hence the claim is true by Lemma~\ref{lem:finite}.
 \end{proof}

\section{Finite Cartan graphs of rank 4}\label{se:Rank4Cartan}

 We give the properties of finite connected indecomposable Cartan graphs of rank 4 in Theorem~\ref{Theo:goodnei}, which will be essential for our classification in the next section.

Let $I=\{1,2,3,4\}$ and $\cC = \cC (I, \cX, r, (A^X)_{X \in \cX})$ be a semi-Cartan graph. Recall that a semi-Cartan graph $\cC$ is standard, if $A^X = A^Y$ for all $X, Y \in Ob(\cW(\cC))$.

To classify all finite Cartan graphs, the following fact is necessary~\cite[corollary 5.4]{HS10}. It shows an important property of finite standard Cartan graphs.
\begin{theorem}~\label{thm:CGfinite}
Let $\cC$ be a standard Cartan graph with generalized
Cartan matrix $A = A^N$ for all $N\in Ob(\cW(\cC))$. Let $\cR$ be a root system
of type $\cC$. Then the Cartan graph $\cW(\cR)$ is finite if and only if $A$ is a Cartan matrix of finite type.
\end{theorem}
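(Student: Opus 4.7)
The plan is to identify, under the standard hypothesis, the set of real roots $\rersys{X}$ at every point $X\in\cX$ with the classical root system $\Delta(A)$ of $A$, so that the finiteness of $\cW(\cR)$ --- equivalent by Lemma~\ref{lem:finite} to the finiteness of $\rersys{X}$ --- reduces to the classical Cartan--Killing statement that $\Delta(A)$ is finite if and only if $A$ is of finite type.

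The crucial observation is that, since $A^X=A$ for every $X$, each reflection $s_i^X\in\Aut(\ndZ^I)$ acts by the common formula $\alpha_j\mapsto\alpha_j-a_{ij}\alpha_i$, which is precisely the classical simple reflection $s_i$ generating the Weyl group $W(A)=\langle s_1,\dots,s_\theta\rangle$. For any element $w=s_{i_1}s_{i_2}\cdots s_{i_n}\in W(A)$, setting $Z_0=X$ and $Z_k=r_{i_k}(Z_{k-1})$ for $1\le k\le n$, the composition $\omega=s_{i_1}^{Z_1}\circ s_{i_2}^{Z_2}\circ\cdots\circ s_{i_n}^{Z_n}$ is a morphism in $\Hom(Z_n,X)$ whose underlying linear map on $\ndZ^I$ equals $w$. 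Combined with Equation~(\ref{eq-realroots}), this yields
\[
  \rersys{X}=\{w(\alpha_i)\mid w\in W(A),\,i\in I\}=\Delta(A).
\]

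For the direction $(\Leftarrow)$, if $A$ is of finite type then $\Delta(A)$ is finite, so $\rersys{X}$ is finite for every $X$; hence $\cC$ is finite by definition, and Lemma~\ref{lem:finite} gives the finiteness of $\cW(\cR)$. Conversely, if $\cW(\cR)$ is finite, then Lemma~\ref{lem:finite} forces $\rersys{X}$ to be finite for every $X$, so $\Delta(A)$ is finite, whence $W(A)$ is finite and $A$ is a Cartan matrix of finite type.

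The main obstacle is the verification of the identification $\rersys{X}=\Delta(A)$, and specifically the inclusion $\Delta(A)\subseteq\rersys{X}$: every classical Weyl-group element must be realised as the linear part of some morphism in $\cW(\cC)$ ending at $X$. This is essentially a path-lifting argument in the exchange graph of $\cC$, using that each generator $s_i$ of $W(A)$ corresponds to an edge labelled $i$ at every vertex, so that any word in the $s_i$'s can be followed backwards from $X$ to produce a morphism with the prescribed linear action.
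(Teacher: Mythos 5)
Your argument is correct, but note that the paper does not actually prove Theorem~\ref{thm:CGfinite}: it is quoted verbatim from Heckenberger--Schneider \cite[Corollary 5.4]{HS10}, so there is no in-paper proof to compare against. Your proposal is a legitimate self-contained derivation and it matches the spirit of the cited source. The two pillars are sound: (i) the identification $\rersys{X}=\{w(\alpha_i)\mid w\in W(A),\ i\in I\}$, where the inclusion $\supseteq$ follows from your path-lifting observation (the maps $r_i:\cX\to\cX$ are everywhere defined, so any word in the $s_i$ can be traced backwards from $X$ through the exchange graph, and standardness forces every generator $s_i^{Z}$ to equal the classical reflection $s_i$), and the inclusion $\subseteq$ follows because every morphism of $\cW(\cC)$ is a product of generators whose linear parts lie in $W(A)$; (ii) the reduction of ``$\cW(\cR)$ finite'' to ``$\rersys{X}$ finite'' via Lemma~\ref{lem:finite}, which is applicable because the paper's standing hypothesis in that subsection is that $\cC$ is connected. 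Two small points you should make explicit: the set you produce is the set of \emph{real} roots $\Delta^{\mathrm{re}}(A)$ of the Kac--Moody datum, not $\Delta(A)$ itself (for non-finite type the latter also contains imaginary roots, so writing $\Delta(A)$ is an abuse); and the final step --- $\Delta^{\mathrm{re}}(A)$ finite iff $W(A)$ finite iff $A$ is of finite type --- is an external classical fact (Kac's classification of generalized Cartan matrices), which deserves a citation rather than being treated as immediate. With those caveats the proof is complete.
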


For the general Cartan graphs, the points of $\cC$ could have many different neighborhoods. In this case, we define the following "good neighborhoods" in order to cover all the finite connected indecomposable Cartan graphs in such way that at least one point of $\cC$ has one of the good neighborhoods.

To avoid confusion, let
$A_4=\begin{pmatrix}2&-1&0&0\\  -1&2&-1&0\\  0&-1&2&-1\\  0&0&-1&2  \end{pmatrix}$,
$B_4=\begin{pmatrix}2&-1&0&0\\  -1&2&-1&0\\  0&-1&2&-1\\  0&0&-2&2  \end{pmatrix}$,\\
$C_4=B_4'=\begin{pmatrix}2&-1&0&0\\  -1&2&-1&0\\  0&-1&2&-2\\  0&0&-1&2  \end{pmatrix}$,
$D_4=\begin{pmatrix}2&-1&0&0\\  -1&2&-1&-1\\  0&-1&2&0\\  0&-1&0&2  \end{pmatrix}$,
and
$F_4=\begin{pmatrix}2&-1&0&0\\  -1&2&-2&0\\  0&-1&2&-1\\  0&0&-1&2  \end{pmatrix}$.

\begin{defn}\label{defA4}
  We say that $X$ has a \textbf{good $A_4$ neighborhood} if there exists a permutation of $I$ and an integer sequence $(a,b)\in \ndN^2$ such that\\
  $A^X=A^{r_1(X)}=A_4$,
  $A^{r_2(X)}=\begin{pmatrix}2&-1&0&0\\-1&2&-1&0\\0&-1&2&-a\\0&0&-1&2\end{pmatrix}$,
  $A^{r_3(X)}=\begin{pmatrix}2&-1&0&0\\-1&2&-1&-1\\0&-1&2&-1\\0&-1&-1&2\end{pmatrix}$, and
  $A^{r_4(X)}=\begin{pmatrix}2&-1&0&0\\-1&2&-1&0\\0&-b&2&-1\\0&0&-1&2\end{pmatrix}$,
  where $(a,b)$ satisfies one of the following.
  \begin{enumerate}
    \item[$(1)$] $(a,b)\in \{(2,1),(2,2)\}$.
    \item[$(2)$] $(a,b)=(1,2)$, $a_{24}^{r_1r_3(X)}=-1$.
    \item[$(3)$] $(a,b)=(1,1)$, $a_{14}^{r_2r_3(X)}=a_{41}^{r_2r_3(X)}\in \{0,-1\}$.
   \end{enumerate}
\end{defn}
\begin{defn}\label{defB4}
  We say that $X$ has a \textbf{good $B_4$ neighborhood} if there is a permutation of $I$ with respect to which\\
  $A^X=A^{r_i(X)}=B_4$, for all $i\in I$ and $a^{r_3r_4(X)}_{24}=-1$.
\end{defn}

We get the following property of the finite connected indecomposable Cartan graphs by computer calculations algorithms.
\begin{theorem}\label{Theo:goodnei}
Let $M:=(\Bbbk x_1,\dots, \Bbbk x_4)$ be a tuple of one-dimensional \YD modules over $\Bbbk G$. Assume that $M$ admits all reflections and $\cC(M) = \cC (I, \cX, r, (A^X)_{X \in \cX})$ be the attached indecomposable semi-Cartan graph to $M$.
If $\rsys^{[M]}$ is finite, then one of the following is true.
\begin{enumerate}
\item[$(1)$] The Cartan graph $\cC(M)$ is standard (of type $A_4$, $B_4$, $C_4$, $D_4$, $F_4$).
\item[$(2)$] Up to equivalence, there exists a point $Y\in \cX$ such that $Y$ has one of the good $A_4$, $B_4$ neighborhoods.
\end{enumerate}
\end{theorem}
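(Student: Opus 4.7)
The plan is to combine the finiteness consequence of Theorem~\ref{thm:rootofR_M} with the classification of rank 4 finite connected indecomposable Cartan graphs (equivalently finite Weyl groupoids) due to Cuntz--Heckenberger~\cite{c-Heck14a}. The hypothesis that $\rsys^{[M]}$ is finite together with Theorem~\ref{thm:rootofR_M} guarantees that $\cC(M)$ is a finite Cartan graph, and Proposition~\ref{prop.indecom} yields that the root system $\cR(M)$ is irreducible.

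Case (1) is essentially immediate. If $A^X = A$ for all $X \in \cX$, then $\cC(M)$ is standard, and Theorem~\ref{thm:CGfinite} forces $A$ to be a Cartan matrix of finite type. Since $\cC(M)$ is indecomposable of rank 4, the only possibilities are $A_4, B_4, C_4, D_4, F_4$, which is exactly conclusion~(1).

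For case (2), suppose $\cC(M)$ is non-standard, so there exist $X \in \cX$ and $i \in I$ with $A^{r_i(X)} \neq A^X$. The compatibility rule $a^X_{ij} = a^{r_i(X)}_{ij}$ restricts any change to entries $a_{jk}$ with $j,k \neq i$, and the relations $(r_i r_j)^{m_{ij}^X}(X) = X$ with $m_{ij}^X = |\rsys^X \cap (\ndN_0 \al_i + \ndN_0 \al_j)|$ (determined by the rank 2 sub-configurations) further constrain which transitions are admissible. Using the Cuntz--Heckenberger classification to enumerate the non-standard rank 4 finite connected indecomposable Cartan graphs, I would verify for each one that at some point $Y$, after a suitable permutation of $I$, the quadruple of matrices $A^Y, A^{r_1(Y)}, A^{r_2(Y)}, A^{r_3(Y)}, A^{r_4(Y)}$ matches the shape prescribed in Definition~\ref{defA4} or Definition~\ref{defB4}, together with the required second-neighbor conditions. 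The specific local patterns in those definitions were engineered precisely to cover exactly the non-standard configurations that can arise.

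The main obstacle is the size of the case analysis. Many non-standard rank 4 Cartan graphs must be examined, and for each candidate $Y$ one must check not only the four immediate neighbors but also the second-neighbor matrices $A^{r_1 r_3(Y)}, A^{r_2 r_3(Y)}, A^{r_3 r_4(Y)}$ whose selected entries appear in cases (2), (3) of Definition~\ref{defA4} and in Definition~\ref{defB4}. In practice this enumeration is carried out by a computer algebra algorithm, as the statement of the theorem explicitly anticipates. The conceptual payoff is that the reformulation in terms of good $A_4$ and good $B_4$ neighborhoods isolates a small list of local matrix patterns; this list suffices to drive the classification of braidings in Section~\ref{se:clasi} through Lemma~\ref{le:Dynkin}, and thereby avoids repeating the graph-by-graph analysis in the Nichols algebra setting.
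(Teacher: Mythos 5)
Your proposal is correct and follows essentially the same route as the paper: both reduce the statement to the Cuntz--Heckenberger classification of finite rank-4 Weyl groupoids (the paper invokes \cite[Theorem~4.1]{c-Heck14a} to place some point's positive root set in the list of Appendix~B.2 and then reads off neighbor Cartan matrices via Lemma~\ref{lem:jik}), and both delegate the resulting finite case check of neighbor and second-neighbor matrices to a computer algorithm. Your explicit use of Theorem~\ref{thm:CGfinite} to pin down the five standard types in case (1) is a minor presentational difference, not a different argument.
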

\begin{proof}
  If $\rsys^{[M]}$ is finite, then $\cC(M)$ is a finite Cartan graph and it has a unique finite root system by Theorem \ref{thm:rootofR_M}, say $\cR(M)$.
  Assume that $\cR=\cR(\cC,(\rersys{X})_{X\in \cX})$ is the unique root system, where $\rersys{X}$ is the real roots of $X$.
  Moreover, the root system $\cR(M)$ is irreducible by Proposition \ref{prop.indecom}.
  For any $X\in \cX$,
  let $\rersys{X}_{\boldsymbol{+}}$ be the positive roots of $X$.
  By~\cite[Theorem~4.1]{c-Heck14a} there exists a point $X\in \cX$ satisfying that the set $\rersys{X}_{\boldsymbol{+}}$ is in the list of~\cite[Appendix B.2.]{c-Heck14a} up to a permutation of $I$. There are precisely 11 such possible sets of real roots for the rank 4 case.
  We analyze each set of the real roots in the list.
  For point $Y$, we assume that $\rersys{Y}_{\boldsymbol{+}}$ in the list.
%we calculate all neighbors $\{r_i(Y)$, $i\in I$\} of $Y$.
  Since the reflection $s_i^Y$ maps $\rersys{Y}_{\boldsymbol{+}}\setminus \{\alpha_i\}$ bijectively to $\rersys{r_i(Y)}_{\boldsymbol{+}}\setminus \{\alpha_i\}$ for any $i\in I$ ,
  the Cartan matrices of all neighbors of $Y$ can be obtained from $\rersys{Y}_{\boldsymbol{+}}$ by Lemma~\ref{lem:jik}.
  If the Cartan graph $\cC(M)$ is standard or $Y$ has a good $A_4$ or $B_4$ neighborhood, then the claim is true.
  Otherwise repeat the previous step to the neighbours of $Y$. Since $\cX$ is finite, this algorithm terminates.
  The elementary calculations are done by GAP algorithms and they are skipped here.
\end{proof}

\section{Classification theorem for rank 4 case}\label{se:clasi}
In this section, all rank 4 Nichols algebras of diagonal type with a finite set of roots are determined. We formulate the main result in Theorem~\ref{theo:clasi} and present the corresponding generalized Dynkin diagrams in Table~\ref{tab.1}.

\begin{theorem}\label{theo:clasi}
Let $\Bbbk$ is a field of characteristic $p>0$. Let $I=\{1,2,3,4\}$.
Let $(V,c)$ be a braided vector space of diagonal type over $\Bbbk$ with basis $\{x_k|k\in I\}$ satisfying
\begin{equation*}
  c(x_i \otimes x_j) = q_{ij}x_j \otimes x_i \quad \textit{for some} \quad q_{ij} \in \Bbbk^*.
\end{equation*}
Assume that the $(q_{ij})_{i,j\in I}$ is an indecomposable braiding matrix.
Let $M:=(\Bbbk x_i)_{i\in I}$.
Then the Nichols algebra $\cB(V)$ generated by $(V,c)$ has a finite set of roots ${\roots}^{[M]}$ if and only if
the generalized Dynkin diagram $\cD$ of $V$ appears in Table~\ref{tab.1}.
In this case,
the row of Table~\ref{tab.1} containing $\cD$
consists precisely of the generalized Dynkin diagrams of all the points of $\cC(M)$.
The corresponding row of Table~\ref{tab.2} contains the exchange graph of $\cC(M)$.
\end{theorem}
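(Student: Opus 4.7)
The plan is to exploit Theorem~\ref{Theo:goodnei} as the structural backbone of both implications. For the necessity direction, assume ${\roots}^{[M]}$ is finite. By Theorem~\ref{thm:rootofR_M}, $\cC(M)$ is then a finite connected indecomposable Cartan graph, so Theorem~\ref{Theo:goodnei} provides a point $Y \in \cY_\theta(M)$ which either lies in a standard Cartan graph (of type $A_4, B_4, C_4, D_4$, or $F_4$) or admits a good $A_4$ or good $B_4$ neighborhood. Since every diagram in a given row of Table~\ref{tab.1} is related to the others by iterated reflections, we may replace $M$ by a representative of $Y$ without loss of generality.

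In the standard case, every Cartan matrix $A^X$ equals the fixed Cartan matrix, so Lemma~\ref{le:aijM} translates each entry $a_{ij}$ into a polynomial condition on $q_{ii}$ and on the product $q_{ij}q_{ji}$. The compatibility $A^{r_i(X)} = A^X$, combined with Lemma~\ref{le:Dynkin}, which records how the diagonal entries $q_{jj}$ and the products $q_{jk}q_{kj}$ transform under $R_i$, constrains these invariants strongly. Enumerating the finitely many solutions modulo the arithmetic of the characteristic $p$ base field yields the rows of Table~\ref{tab.1} associated with types $A_4, B_4, C_4, D_4, F_4$.

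For each configuration of a good neighborhood in Definitions~\ref{defA4} and~\ref{defB4}, the argument is analogous but local. First, the Cartan entries $a_{ij}^Y$ give polynomial conditions on the $q_{ij}$ at $Y$ via Lemma~\ref{le:aijM}. Next, Lemma~\ref{le:Dynkin} produces the braiding matrices at the neighbors $r_i(Y)$, and the prescribed shapes $A^{r_i(Y)}$ translate back, via Lemma~\ref{le:aijM}, into further polynomial conditions on the $q_{ij}$. The auxiliary conditions $a_{24}^{r_1 r_3(X)} = -1$ in Definition~\ref{defA4}(2), $a_{14}^{r_2 r_3(X)} = a_{41}^{r_2 r_3(X)} \in \{0,-1\}$ in Definition~\ref{defA4}(3), and $a_{24}^{r_3 r_4(X)} = -1$ in Definition~\ref{defB4} introduce constraints at second-order neighbors, computed by iterating Lemma~\ref{le:Dynkin}. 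Solving the resulting systems in $\Bbbk^*$ case by case, while carefully tracking $p$, produces the remaining rows of Table~\ref{tab.1}.

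For sufficiency, given any diagram $\cD$ listed in Table~\ref{tab.1}, we verify that the associated $M$ admits all reflections and has a finite root system by explicitly closing its reflection orbit: starting from $\cD$, compute each $R_i(M)$ via Lemma~\ref{le:Dynkin}, check that the resulting diagram again appears in the same row of Table~\ref{tab.1}, and iterate until the orbit stabilizes. Finiteness of the orbit implies finiteness of $\cY_\theta(M)$, hence of $\cC(M)$, and therefore of ${\roots}^{[M]}$ by Theorem~\ref{thm:rootofR_M}; the exchange graph so obtained coincides with the row of Table~\ref{tab.2}. The main obstacle is the sheer combinatorial volume: each good-neighborhood type branches according to which roots of unity appear at the diagonal entries and to the ambient characteristic, and each candidate diagram generates a long chain of reflections that must be checked against the table. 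The rank 2 and rank 3 classifications of~\cite{WH-14,W-17} can be invoked repeatedly to identify rank-3 subsystems sitting inside the rank-4 diagrams, which substantially shortens this bookkeeping.
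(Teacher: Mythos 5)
Your necessity argument follows the paper's route essentially verbatim: reduce via Theorem~\ref{Theo:goodnei} to the standard case or to a good $A_4$/$B_4$ neighborhood, translate the prescribed Cartan matrices at $Y$ and its (iterated) neighbors into polynomial conditions on the $q_{ii}$ and $q_{ij}q_{ji}$ through Lemmas~\ref{lem:aij} and~\ref{le:Dynkin}, and solve case by case while tracking $p$, invoking the rank~2 and rank~3 tables of \cite{WH-14,W-17} to prune subdiagrams. That is exactly what the paper does in its subcases $a_1,\dots,a_{16}$, $b_1,\dots,b_{14}$, $c$, $d$, $f$.

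The sufficiency direction, however, contains a genuine gap. You argue: close the reflection orbit of $\cD$ under the $R_i$, observe the orbit is finite, conclude that $\cY_\theta(M)$ is finite, ``hence'' $\cC(M)$ is finite, and therefore ${\roots}^{[M]}$ is finite by Theorem~\ref{thm:rootofR_M}. But finiteness of $\cC(M)$ in the sense required by Theorem~\ref{thm:rootofR_M} means finiteness of the real root sets $\rersys{X}$, not finiteness of the set of points: a connected Cartan graph can have a single object and still have an infinite root system (any standard Cartan graph of affine type does). So ``finite orbit of generalized Dynkin diagrams'' does not imply ``finite root system,'' and the implication as you state it is false. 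The paper avoids this by a different device: it observes that the Cartan graph $\cC(M)$ built from a row of Table~\ref{tab.1} coincides with a Cartan graph arising from the corresponding row of Table~3 in \cite{a-Heck09}, whose arithmetic root system is finite by \cite[Theorem~17]{a-Heck09}; finiteness of ${\roots}^{[M]}$ then follows from Theorem~\ref{thm:rootofR_M}. To repair your version without that citation you would have to verify finiteness of $\rersys{X}$ directly, e.g.\ by exhibiting the full finite set of positive roots (via Proposition~\ref{prop.allposroots} and a longest element of the Weyl groupoid), not merely the finiteness of the set of diagrams.
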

We claim that Theorem~\ref{theo:clasi} is enough to classify finite-dimensional Nichols algebra of diagonal type by~\cite[Corollary~6]{a-Heck04e}.
\begin{cor}\label{coro-cla}
  Assume that $p>0$.
  Then the Nichols algebra $\cB(V)$ is finite dimensional if and only if the generalized Dynkin diagram $\cD$ of $V$ appears in Table~(\ref{tab.1}) and the labels of the vertices of $\cD$ are roots of unity (including $1$).
\end{cor}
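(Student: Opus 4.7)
My plan is to combine Theorem~\ref{theo:clasi} with Kharchenko's PBW decomposition and the one-dimensional case, following the strategy of \cite[Corollary~6]{a-Heck04e}. First, by Theorem~\ref{theo:clasi} the root system $\rsys^{[M]}$ is finite if and only if $\cD$ appears in Table~\ref{tab.1}, so it suffices to prove, under this finiteness assumption, that $\dim_\Bbbk\cB(V)<\infty$ if and only if every vertex label of $\cD$ is a root of unity.

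By Kharchenko's theorem (Remark~\ref{rem-decom}) the Nichols algebra decomposes as
\[\cB(V)\cong \bigotimes_{\beta\in \rsys^{[M]}_+}\cB(W_\beta),\]
where each $W_\beta$ is a one-dimensional \YD module of degree $\beta$ and hence has scalar self-braiding
\[q_{\beta\beta}=\prod_{i,j\in I}q_{ij}^{\beta_i\beta_j}=\prod_i q_{ii}^{\beta_i^2}\prod_{i<j}(q_{ij}q_{ji})^{\beta_i\beta_j},\qquad \beta=\textstyle\sum_i\beta_i\alpha_i.\]
For a one-dimensional braided vector space with scalar $q$, the Nichols algebra equals $\Bbbk[x]/(x^n)$ when $q$ is a primitive $n$th root of unity with $n\ge 2$, equals $\Bbbk[x]/(x^p)$ when $q=1$ (using $p>0$), and is the polynomial algebra $\Bbbk[x]$ otherwise. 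Consequently $\cB(V)$ is finite-dimensional if and only if $\rsys^{[M]}_+$ is finite \emph{and} each $q_{\beta\beta}$ is a root of unity, where $1$ counts as a root of unity in positive characteristic.

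The direction ``$\cB(V)$ finite-dimensional $\Rightarrow$ vertex labels are roots of unity'' is immediate by specializing $\beta=\alpha_i$, so that $q_{\alpha_i\alpha_i}=q_{ii}$. For the converse, the product formula above reduces everything to checking that every edge label $q_{ij}q_{ji}$ of $\cD$ is a root of unity whenever the vertex labels are. Here I would invoke Lemma~\ref{le:aijM}: since $M$ is $i$-finite (as $\rsys^{[M]}$ is finite), setting $m=-a_{ij}^M$ yields either $q_{ij}q_{ji}=q_{ii}^{-m}$ (already a root of unity if $q_{ii}$ is) or $(m+1)_{q_{ii}}=0$; in the latter branch one reads off from Table~\ref{tab.1} that $q_{ij}q_{ji}$ is still displayed as a monomial in the vertex labels $q_{kk}$, so the conclusion persists. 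Combining, ``vertex labels roots of unity'' $\Rightarrow$ ``all edge labels roots of unity'' $\Rightarrow$ ``all $q_{\beta\beta}$ roots of unity'' $\Rightarrow$ $\cB(V)$ finite-dimensional.

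The main obstacle is the last reduction: uniformly verifying that every edge label in every row of Table~\ref{tab.1} is a monomial in the vertex labels, so that the hypothesis about vertices alone controls all of $q_{\beta\beta}$. The cleanest route is a direct case inspection of the classification table, guided by the arithmetic constraint of Lemma~\ref{le:aijM}; the positive-characteristic convention that ``$q=1$ is a root of unity'' must be invoked precisely to handle vertices whose label is $1$, which in characteristic zero would produce an infinite-dimensional factor $\Bbbk[x]$ in the PBW decomposition.
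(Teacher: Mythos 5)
Your overall strategy --- Kharchenko's PBW decomposition into rank-one Nichols algebras $\cB (W_\beta )$, the dichotomy $\Bbbk [x]/(x^n)$ versus $\Bbbk [x]$ according to whether the self-braiding $q_{\beta \beta }$ is a root of unity (with $q=1$ handled by characteristic $p$), and the resulting criterion ``$\rsys ^{[M]}$ finite and every $q_{\beta \beta }$ a root of unity'' --- is exactly the content of the cited result \cite[Corollary~6]{a-Heck04e}, which is all the paper itself offers by way of proof. The genuine gap is in your final reduction. It is not true that every edge label of every diagram in Table~\ref{tab.1} is a monomial in the vertex labels: the sixth diagram of row~10 (the point $\cD _{10,6}$ in the paper's labelling) has all four vertices labelled $-1$ while its edges are labelled $q$, $q^{-1}$, $q$ with $q\in \Bbbk ^*$, $q^2\neq 1$ a free parameter, and the second ``right-of-way'' diagram in row~12 has the same feature. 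For such a diagram the hypothesis ``all vertex labels are roots of unity'' is vacuously satisfied and does not force $q$ to be a root of unity; yet $\alpha _1+\alpha _2$ is a positive root and $q_{\beta \beta }=(-1)(-1)q=q$ for $\beta =\alpha _1+\alpha _2$, so the tensor factor $\cB (W_\beta )\cong \Bbbk [x_\beta ]$ is infinite dimensional. Your fallback via Lemma~\ref{le:aijM} does not repair this: in the branch $(m+1)_{q_{ii}}=0$ the lemma constrains only $q_{ii}$ and says nothing about $q_{ij}q_{ji}$, so the promised ``direct case inspection'' would in fact turn up these counterexamples rather than eliminate them.

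What your computation really shows is that the statement must be read (or restated) with the hypothesis that \emph{all} labels of $\cD $ --- vertices and edges alike --- are roots of unity, equivalently that $q_{\beta \beta }$ is a root of unity for every positive root $\beta $. With that hypothesis your argument closes: $q_{\beta \beta }=\prod _i q_{ii}^{\beta _i^2}\prod _{i<j}(q_{ij}q_{ji})^{\beta _i\beta _j}$ is then automatically a root of unity, and conversely finite dimensionality applied to the roots $\alpha _i$ and $\alpha _i+\alpha _j$ (the latter is a root precisely when vertices $i$ and $j$ are joined by an edge) recovers the condition on both vertex and edge labels. So the correct fix is not a row-by-row inspection of Table~\ref{tab.1} but a correction of which labels the hypothesis must cover; as literally stated, neither the corollary nor your reduction survives the diagrams with all vertices labelled $-1$.
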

\begin{rem}
In the part of the proof, we give the following statement to avoid confusion.
 \begin{itemize}
  \item[$(1)$]
Label the vertices of the generalized Dynkin diagrams from left to right and then top to bottom by $1, \ldots, 4$
(for example: label $D_4$ as in Fig.\ \ref{fig_D4}).
\begin{figure}[h!]
%             x p7
%          p5/
%   p2 p3 p4/
%   x------x
%           \
%         p6 \
%             x p8
%
 \begin{center}
\setlength{\unitlength}{3947sp}
\begingroup\makeatletter\ifx\SetFigFont\undefined
\gdef\SetFigFont#1#2#3#4#5{%
  \reset@font\fontsize{#1}{#2pt}%
  \fontfamily{#3}\fontseries{#4}\fontshape{#5}%
  \selectfont}%
\fi\endgroup
\begin{picture}(3348,928)(500,-705)
\thicklines
\put(700,-300){$\cD:$}
{\put(1250,-211){\line( 1, 0){500}}}%1-2
{\put(1860,-170){\line( 1, 1){400}}}%2-3
{\put(1840,-270){\line( 1, -1){400}}}%2-4
\put(1726,-436){\makebox(0,0)[lb]{\smash{{\SetFigFont{12}{14.4}{\rmdefault}{\mddefault}{\updefault}{$2$}}}}}
\put(2480,-736){\makebox(0,0)[lb]{\smash{{\SetFigFont{12}{14.4}{\rmdefault}{\mddefault}{\updefault}{$4$}}}}}
\put(1126,-436){\makebox(0,0)[lb]{\smash{{\SetFigFont{12}{14.4}{\rmdefault}{\mddefault}{\updefault}{$1$}}}}}
\put(2476,164){\makebox(0,0)[lb]{\smash{{\SetFigFont{12}{14.4}{\rmdefault}{\mddefault}{\updefault}{$3$}}}}}
{\put(1201,-211){\circle{100}}}%1
{\put(1801,-211){\circle{100}}}%2
{\put(2300,230){\circle{100}}} %3
{\put(2300,-701){\circle{100}}}%4
\end{picture}
 \end{center}
\caption{generalized Dynkin diagram of type $D_4$}
\label{fig_D4}
\end{figure}
\item[$(2)$] For a generalized Dynkin diagram $\cD$ and $i,j,k,l\in I$,
write $\tau_{ijkl} \cD$ for the graph $\cD$
where the vertices of $\cD$ change to $i$, $j$, $k$, $l$ respectively.
\end{itemize}
\end{rem}

\begin{proof}
We prove the theorem by the following two steps:
  \begin{enumerate}
 \item[(1)]
  The if part is clear. Indeed, assume that the generalized Dynkin diagram $\cD$ appears in row $r$ of any of Table~\ref{tab.1}.
  From Lemmas~\ref{lem:aij} and~\ref{le:Dynkin} we determine that $M$ admits all reflections.
  The detailed calculations are skipped at this point here.
  Hence the Cartan graph $\cC(M)$ can be defined by theorem~\ref{theo.regualrcar}.
  Notice that $\cC(M)$ is the same with the Cartan graph obtained from the Dykin diagrams listed in row $s$ of Table~3 in~\cite{a-Heck09},
  where $s$ appears in the third column of row $r$ of Table~\ref{tab.2}.
%tab 4 is the exchange graph table!
  Moreover,
  the arithmetic root systems of the above Cartan graphs are finite,
  see~\cite[Theorem~17]{a-Heck09}.
  Hence $\cW(\cC(M))$ is finite.
  Then $\cB(V)$ has a finite set of roots ${\roots}^{[M]}$ by Theorem~\ref{thm:rootofR_M}.

  \item[(2)]
   Next we prove that if $\cB(V)$ has a finite set of roots then the generalized Dynkin diagram of $V$ appears in Table~\ref{tab.1}.
   we assume that $\cB(V)$ has a finite set of roots ${\roots}^{[M]}$. Let $X=[M]^s_4$, $I=\{1,2,3,4\}$, and $A^X:=(a_{ij})_{i,j\in I}$ be the Cartan matrix of $X$.
  Let $(q_{i,j})_{i,j\in I}$ be the braiding matrix of $X$ and $(q_{i,j}^{r_i(X)})_{i,j\in I}$ be the braiding matrix of $r_i(X)$.
  To simply the labels,
  we write $q_{ij}':=q_{ij}q_{ji}$ for $1\leq i,j\leq 4$.
  Since $\cB(V)$ has a finite set of roots ${\roots}^{[M]}$, we obtain that $\cC(M)$ is a finite Cartan graph by Theorem~\ref{thm:rootofR_M}.
  we are free to assume that either $\cC(M)$ is standard or there exists a point $X$ such that $A^X$ has a good $A_4$ or $B_4$ neighborhood by Theorem~\ref{Theo:goodnei}.

  Case a. Assume that either $X$ has a good $A_4$ neighborhood or $\cC(M)$ is standard of type $A_4$.
  Let $(a,b):=(-a_{34}^{r_2(X)}, -a_{32}^{r_4(X)})$.
  From~Lemma~\ref{le:Dynkin} the condition $A^X=A_4$ implies that $(2)_{q_{ii}}(q_{ii}q_{i,i+1}'-1)=(2)_{q_{jj}}(q_{jj}q_{j-1,j}'-1)=0$,
  for all $i\in \{1,2,3\}$ and $j\in \{2,3,4\}$.
  Hence we distinguish the following subcases: $a_1$, $a_2$, $\dots$, $a_{18}$.

  Subcase $a_1$.
  Consider that $q_{ii}q_{i,i+1}'-1=q_{jj}q_{j-1,j}'-1=0$ for all $i\in \{1,2,3\}$ and $j\in \{2,3,4\}$.
  Then $\cC(M)$ is standard and $\cD=\cD_{11}$.

  Subcase $a_2$.
  Consider that %$q_{11}q_{12}'-1=q_{22}q_{12}'-1=q_{22}q_{23}'-1=q_{33}q_{23}'-1=q_{33}q_{34}'-1=q_{44}q_{34}'-1=0$.
  $q_{11}=-1$ and $q_{ii}q_{i,i+1}'-1=q_{jj}q_{j-1,j}'-1=0$ for all $i\in \{2,3\}$ and $j\in \{2,3,4\}$.
  To avoid the repetition we assume that $q_{12}'\not=-1$.
  Otherwise we get the case in $a_1$.
  Then $\cC(M)$ is standard and $\cD=\cD_{61}$.

  Subcase $a_3$.
  Consider that
  $q_{22}=-1$ and $q_{ii}q_{i,i+1}'-1=q_{jj}q_{j-1,j}'-1=0$ for all $i\in \{1,3\}$ and $j\in \{3,4\}$.
  Assume that the condition $\{q_{12}',q_{23}'\}=\{-1\}$ doesn't hold in this case to avoid the repetition.
  Then by~\cite[Lemma 1.4]{W-17} for $a_{13}^{r_2(X)}=0$
  we get that $q_{12}'q_{23}'=1$.
  Then $q_{12}'\not=-1$ and $\cC(M)$ is standard.
  Hence $\cD=\cD_{10,1}$.

  Subcase $a_4$.
  Consider that
  $q_{33}=-1$ and $q_{ii}q_{i,i+1}'-1=q_{jj}q_{j-1,j}'-1=0$ for all $i\in \{1,2\}$ and $j\in \{2,4\}$.
  Assume that the condition $\{q_{23}',q_{34}'\}=\{-1\}$ doesn't hold in this case to avoid repetition.
  From~\cite[Lemma 1.4]{W-17} we get $(a,b)=(1,1)$.
  Let $q:=q_{23}'$ and $r:=q_{34}'$.
  Then the condition $\{q,r\}=\{-1\}$ doesn't hold.
  If $\cC(M)$ is standard, then $qr=1$ and $q\not=-1$ by~\cite[Lemma 1.4]{W-17}.
  Hence $\cD=\tau_{4321}\cD_{10,1}$.
  If $X$ has a good $A_4$ neighborhood,
  then $a_{14}^{r_2r_3(X)}=a_{41}^{r_2r_3(X)}\in \{0,-1\}$ by Definition~\ref{defA4}(3).
  If $a_{14}^{r_2r_3(X)}=0$, then $q^2r=1$, $q\not=-1$, and hence $\cD=\cD_{13,1}$.
  If $a_{14}^{r_2r_3(X)}=a_{41}^{r_2r_3(X)}=-1$, then
  $q^2r\not=1$ and the reflections of $X$
\setlength{\unitlength}{1mm}
\begin{align*}
X:~
\Dchainfour{3}{$q^{-1}$}{$q$}{$q^{-1}$}{$q$}%
{$-1$}{$r$}{$r^{-1}$}
\quad \Rightarrow
r_3(X):~
\Drightofway{m}{$q^{-1}$}{$q$}{$-1$}{$q^{-1}$}{$qr$}%
{$-1$}{$r^{-1}$}{$-1$}
\end{align*}
\begin{align*}
\quad \Rightarrow
r_2r_3(X):~
\tau_{3214} \ \ \Drightofway{}{$q^{-1}$}{$q$}{$-1$}{$q^{-1}$}{$(qr)^{-1}$}%
{$-1$}{$q^2r$}{$qr$}
\end{align*}
imply that $qr=-1$ or $q^3r^2=1$ by~Lemma~\ref{le:Dynkin}.
If $qr=-1$, then $q\not=-1$, $p\not=2$, and $\cD=\cD_{14,1}$.
If $q^3r^2=1$, then $\cD=\cD_{94}$.
%notice the line 9 一列 先排序，然后再是第二列；

  Subcase $a_5$.
  Consider that
  $q_{44}=-1$, $q_{34}'\not=-1$, and $q_{ii}q_{i,i+1}'-1=q_{jj}q_{j-1,j}'-1=0$ for all $i\in \{1,2,3\}$ and $j\in \{2,3\}$.
  Then $\cC(M)$ is standard and $\cD=\tau_{4321}\cD_{61}$.

  Subcase $a_6$.
  Consider that
  $q_{11}=q_{22}=-1$, $q_{12}'\not=-1$, and $q_{33}q_{34}'-1=q_{ii}q_{i-1,i}'-1=0$ for all $i\in \{3,4\}$.
  Then $A^{r_3(X)}=A_4$ and hence $\cC(M)$ is standard by the assumption.
  Then $q_{12}'q_{23}'=1$ by~\cite[Lemma 1.4]{W-17} and hence $\cD=\cD_{62}$.

  Subcase $a_7$.
  Consider that
  $q_{11}=q_{33}=-1$, $q_{12}'\not=-1$, and $q_{22}q_{23}'-1=q_{ii}q_{i-1,j}'-1=0$ for all $i\in \{2,4\}$.
  Then we get $(a,b)=(1,1)$.
  Let $q:=q_{23}'$ and $r:=q_{34}'$.
  Then $q\not=-1$.
  If $\cC(M)$ is standard,
  then $qr=1$ and $\cD=\cD_{10,3}$.
  If $X$ has a good $A_4$ neighborhood,
  then $a_{41}^{r_2r_3(X)}=a_{14}^{r_2r_3(X)}\in \{0,1\}$ from Definition~\ref{defA4}(3).
  If $a_{41}^{r_2r_3(X)}=0$,
  then $q^2r=1$, $q\not=-1$, and hence $\cD=\cD_{12,3}$.
  If $a_{14}^{r_2r_3(X)}=a_{41}^{r_2r_3(X)}=-1$, then $q^2r\not=1$
  and the reflections of $X$
\setlength{\unitlength}{1mm}
\begin{align*}
X:~
\Dchainfour{3}{$-1$}{$q$}{$q^{-1}$}{$q$}%
{$-1$}{$r$}{$r^{-1}$}
\quad \Rightarrow \quad
r_3(X):~
\Drightofway{m}{$-1$}{$q$}{$-1$}{$q^{-1}$}{$qr$}%
{$-1$}{$r^{-1}$}{$-1$}
\end{align*}
\begin{align*}
\quad \Rightarrow
r_2r_3(X):~
\tau_{3214} \ \ \Drightofway{}{$q^{-1}$}{$q$}{$-1$}{$q^{-1}$}{$(qr)^{-1}$}%
{$q$}{$q^2r$}{$qr$}
\end{align*}
show that $q^3r=1$, $qr=-1$, and $p\not=2$.
Hence $\cD=\cD_{22,3}$.

   Subcase $a_8$.
  Consider that
  $q_{11}=q_{44}=-1$ and $q_{ii}q_{i,i+1}'-1=q_{ii}q_{i-1,i}'-1=0$ for all $i\in \{2,3\}$.
  To avoid repetition we assume that $q_{12}', q_{34}'\not=-1$.
  Then $\cC(M)$ is standard and $\cD=\cD_{10,5}$.

  Subcase $a_9$.
  Consider that
  $q_{22}=q_{33}=-1$ and $q_{11}q_{12}'-1=q_{44}q_{34}'-1=0$.
  Set $q:=q_{11}$, $r:=q_{23}'$, and $s:=q_{44}$.
  To avoid repetition we assume that the condition $\{q,r\}=\{-1\}$ doesn't hold.
  We obtain that $b=1$ by~Lemma~\ref{le:Dynkin}.
  If $\cC(M)$ is standard,
  then $q=r=s\not=-1$, and $\cD=\cD_{63}$.
  If $X$ has a good $A_4$ neighborhood,
  then $r=q\not=s$ and $(a,b)\in \{(1,1),(2,1)\}$ by Definition~\ref{defA4}.
  Hence the reflections of $X$
\setlength{\unitlength}{1mm}
\begin{align*}%\label{diaa9}
r_3(X):~
\Drightofway{t}{$r$}{$r^{-1}$}{$r$}{$r^{-1}$}{$rs^{-1}$}%
{$-1$}{$s$}{$-1$} \Rightarrow
X:~
\Dchainfour{2}{$r$}{$r^{-1}$}{$-1$}{$r$}%
{$-1$}{$s^{-1}$}{$s$}
\end{align*}
\begin{align*}
 \quad \Rightarrow
r_2(X):~
\Dchainfour{}{$-1$}{$r$}{$-1$}{$r^{-1}$}%
{$r$}{$s^{-1}$}{$s$}
\end{align*}
imply that $a=2$ by $r\notin \{-1,s\}$ and $s=r^2$ by $a_{24}^{r_3(X)}=-1$.
Then $\cD=\cD_{83}$.

Subcase $a_{10}$.
Consider that %$q_{11}q_{12}'-1=q_{22}q_{12}'-1=q_{22}q_{23}'-1=q_{33}q_{23}'-1=q_{33}q_{34}'-1=q_{44}q_{34}'-1=0$.
  $q_{22}=q_{44}=-1$, $q_{34}'\not=-1$, and $q_{ii}q_{i,i+1}'-1=q_{33}q_{23}'-1=0$ for all $i\in \{1,3\}$.
  To avoid repetition we assume that the condition $\{q_{12}',q_{23}'\}=\{-1\}$ doesn't hold.
Then $\cC(M)$ is standard by the assumption and $A^{r_3(X)}=A_4$.
Hence $q_{12}'q_{23}'=1$ by~\cite[Lemma 1.4]{W-17} and $\cD=\tau_{4321}\cD_{10,3}$.

Subcase $a_{11}$.
Consider that
 $q_{33}=q_{44}=-1$, $q_{34}'\not=-1$, and $q_{ii}q_{i,i+1}'-1=q_{22}q_{12}'-1=0$ for all $i\in \{1,2\}$.
 Let $q:=q_{11}$ and $r:=q_{34}'$.
 Then $r\not=-1$.
 If $\cC(M)$ is standard,
 then $q=r$ by~\cite[Lemma 1.4]{W-17} and hence $\cD=\tau_{4321}\cD_{62}$.
 If $X$ has a good $A_4$ neighborhood,
 then $a=1$, $r\not=q$, and hence $(a,b)\in \{(1,1),(1,2)\}$ by Definition~\ref{defA4}.
 The reflections of $X$
 \setlength{\unitlength}{1mm}
\begin{align*}
r_4(X):~
\Dchainfour{4}{$q$}{$q^{-1}$}{$q$}{$q^{-1}$}%
{$r$}{$r^{-1}$}{$-1$}
\quad \Rightarrow ~
X:~
 \Dchainfour{3}{$q$}{$q^{-1}$}{$q$}{$q^{-1}$}%
{$-1$}{$r$}{$-1$}
\end{align*}
\begin{align*}
\quad \Rightarrow ~
r_3(X):~
\Drightofway{}{$q$}{$q^{-1}$}{$-1$}{$q$}{$rq^{-1}$}%
{$-1$}{$r^{-1}$}{$r$}
\end{align*}
show that $b=2$ by the condition $r\notin \{-1, q\}$.
Then we obtain that $r^2q^{-1}=1$ by $a_{42}^{r_3(X)}=-1$ and $(3)_r(r^2q^{-1}-1)=0$ by $b=2$.
If $r^2q^{-1}=1$ and $r^3\not=1$,
then $\cD=\cD_{92}$.
If $r^2q^{-1}=1$ and $r^3=1$,
then $p\not=3$, $r\in G_3'$, $qr=1$, and $\cD=\cD_{18,2}$.

Subcase $a_{12}$.
Consider that
 $q_{11}=q_{22}=q_{33}=-1$, $q_{12}'\not=-1$, and $q_{44}q_{34}'-1=0$.
 Let $q:=q_{12}'$, $r:=q_{23}'$, and $s:=q_{34}'$.
 Assume that the equations $\{r,s\}=\{-1\}$ doesn't hold.
 Then $A^{r_4(X)}=A_4$ and $b=1$.
 If $\cC$ is standard,
 then $qr=rs=1$ and hence $\cD=\cD_{10,2}$.
 If $X$ has a good $A_4$ neighborhood,
 then $qr=1$, $rs\not=1$ and the reflections of $X$ are
 \setlength{\unitlength}{1mm}
\begin{align*}
r_3(X):~
\Drightofway{t}{$-1$}{$q$}{$q^{-1}$}{$q$}{$sq^{-1}$}%
{$-1$}{$s^{-1}$}{$-1$}
\quad \Rightarrow
X:~
\Dchainfour{2}{$-1$}{$q$}{$-1$}{$q^{-1}$}%
{$-1$}{$s$}{$s^{-1}$}
\end{align*}
\begin{align*}
\quad \Rightarrow
r_2(X):~
\Dchainfour{}{$q$}{$q^{-1}$}{$-1$}{$q$}%
{$q^{-1}$}{$s$}{$s^{-1}$}
\end{align*}
with $q\notin \{-1,s\}$.
Then we obtain $a=2$ by~Lemma~\ref{le:Dynkin} and $s=q^2$ by $a_{24}^{r_{3}(X)}=-1$.
Hence $\cD=\cD_{12,2}$.

Subcase $a_{13}$.
Consider that
 $q_{11}=q_{22}=q_{44}=-1$, $q_{12}'\not=-1$, $q_{34}'\not=-1$, and $q_{33}q_{23}'-1=q_{33}q_{34}'-1=0$.
 Then $\cC$ is standard by $A^{r_3(X)}=A_4$ and the assumption.
 Hence $q_{12}'q_{23}'=1$ and $\cD=\tau_{4321}\cD_{10,4}$.

 Subcase $a_{14}$.
Consider that $q_{11}=q_{33}=q_{44}=-1$, $q_{12}'\not=-1$, $q_{34}'\not=-1$, and $q_{22}q_{12}'-1=q_{22}q_{23}'-1=0$.
Let $q:=q_{12}'$ and $r:=q_{34}'$.
Then $q,r\not=-1$.
Then $a=1$ by~Lemma~\ref{le:Dynkin}.
If $\cC$ is standard,
then $qr=1$ and $\cD=\cD_{10,4}$.
If $X$ has a good $A_4$ neighborhood,
then $qr\not=1$ and the reflections of $X$ are
 \setlength{\unitlength}{1mm}
 \begin{align*}
r_4(X):~
\Dchainfour{4}{$-1$}{$q$}{$q^{-1}$}{$q$}%
{$r$}{$r^{-1}$}{$-1$}
\quad \Rightarrow
X:~
\Dchainfour{3}{$-1$}{$q$}{$q^{-1}$}{$q$}%
{$-1$}{$r$}{$-1$}
\end{align*}
\setlength{\unitlength}{1mm}
 \begin{align}\label{diaa14}
\quad \Rightarrow
r_3(X):~
\Drightofway{l}{$-1$}{$q$}{$-1$}{$q^{-1}$}{$qr$}%
{$-1$}{$r^{-1}$}{$r$}
\quad \Rightarrow
r_1r_3(X):~
\Drightofway{}{$-1$}{$q^{-1}$}{$q$}{$q^{-1}$}{$qr$}%
{$-1$}{$r^{-1}$}{$r$}
\end{align}
with $q, r\not=-1$.
Then $qr^2=1$ by $a_{42}^{r_3(X)}=-1$ and hence $b=2$ by~Lemma~\ref{le:Dynkin}.
Hence $(a,b)=(1,2)$ and $a_{24}^{r_1r_3(X)}=-1$ by Definition~\ref{defA4}(2).
Then $q^2r=1$ by the reflections~(\ref{diaa14}) and~Lemma~\ref{le:Dynkin}.
Hence $r=q\in G_3'$, $p\not=3$, and $\cD=\cD_{20,4}$.

Subcase $a_{15}$.
Consider that $q_{22}=q_{33}=q_{44}=-1$, $q_{34}'\not=-1$, and $q_{11}q_{12}'-1=0$.
Let $q:=q_{12}'$, $r:=q_{23}'$, and $s:=q_{34}'$.
Assume that the condition $\{q, r\}=\{-1\}$ doesn't hold.
If $\cC$ is standard,
then $qr=rs=1$ and $\cD=\tau_{4321}\cD_{10,2}$.
If $X$ has a good $A_4$ neighborhood,
then
$qr=1$, $rs\not=1$, $r\not=-1$, and the reflections of $X$ are
 \setlength{\unitlength}{1mm}
 \begin{align*}
X:~
\Dchainfour{3}{$r$}{$r^{-1}$}{$-1$}{$r$}%
{$-1$}{$s$}{$-1$}
\quad \Rightarrow \quad
r_3(X):~
\Drightofway{}{$r$}{$r^{-1}$}{$r$}{$r^{-1}$}{$rs$}%
{$-1$}{$s^{-1}$}{$s$}
\end{align*}
with $r,s\not=-1$.
We obtain the equation $rs^2=r^2s=1$ by~Lemma~\ref{le:Dynkin} on $a_{24}^{r_3(X)}=a_{42}^{r_3(X)}=-1$.
Then $s=r\in G_3'$, $p\not=3$, and $a=b=2$.
Hence $\cD=\cD_{21,6}$.

Subcase $a_{16}$.
Consider that $q_{11}=q_{22}=q_{33}=q_{44}=-1$, $q_{12}'\not=-1$, and $q_{34}'\not=-1$.
Let $q:=q_{12}'$, $r:=q_{23}'$, and $s:=q_{34}'$.
We assume that $q,s\not=-1$ to avoid repetition.
If $\cC$ is standard,
then $qr=rs=1$, $q\not=-1$, and hence $\cD=\cD_{10,6}$.
If $X$ has a good $A_4$ neighborhood,
then $qr=1$, $sr\not=1$,  and the reflection of $X$ is
 \setlength{\unitlength}{1mm}
 \begin{align*}
X:~
\Dchainfour{3}{$-1$}{$r^{-1}$}{$-1$}{$r$}%
{$-1$}{$s$}{$-1$}
\quad \Rightarrow \quad
r_3(X):~
\Drightofway{}{$-1$}{$r^{-1}$}{$r$}{$r^{-1}$}{$rs$}%
{$-1$}{$s^{-1}$}{$s$}
\end{align*}
with $r, s\not=-1$.
Then we get $rs^2=r^2s=1$ by~Lemma~\ref{le:Dynkin} on $a_{24}^{r_3(X)}=a_{42}^{r_3(X)}=-1$.
Then $s=r\in G_3'$, $p\not=3$, and hence $\cD=\cD_{20,3}$.

  Case (b). Assume that either $X$ has a good $B_4$ neighborhood or $\cC(M)$ is standard of type $B_4$.
  Then by~Lemma~\ref{le:Dynkin} on $A^X=B_4$ we get the equations
  $$(2)_{q_{ii}}(q_{ii}q_{i,i+1}'-1)=(2)_{q_{jj}}(q_{jj}q_{j-1,j}'-1)=(3)_{q_{44}}(q_{44}^2q_{34}'-1)=0,$$
  for all $i\in \{1,2,3\}$ and $j\in \{2,3\}$.
  Then we distinguish the subcases: $a_1$, $a_2$, $\dots$, $a_{14}$ by~Lemma~\ref{le:Dynkin}.

  Subcase $b_1$.
  Consider that $q_{ii}q_{i,i+1}'-1=q_{jj}q_{j-1,j}'-1=q_{44}^2q_{34}'-1=0$ for all $i\in \{1,2,3\}$ and $j\in \{2,3\}$.
  Then $\cC(M)$ is standard and $\cD=\cD_{21}$.

  Subcase $b_2$.
  Consider that $q_{11}=-1$ and $q_{ii}q_{i,i+1}'-1=q_{ii}q_{i-1,i}'-1=q_{44}^2q_{34}'-1=0$ for all $i\in \{2,3\}$.
  In this case we assume that $q_{12}'\not=-1$ to avoid the repetition.
  Then $\cC(M)$ is standard and $\cD=\cD_{71}$.

  Subcase $b_3$.
  Consider that there exists $k\in \{2,3\}$ such that $q_{kk}=-1$ and $q_{ii}q_{i,i+1}'-1=q_{jj}q_{j-1,j}'-1=q_{44}^2q_{34}'-1=0$ for any $i\in \{1,2,3\}\setminus \{k\}$ and $j\in \{2,3\}\setminus \{k\}$.
  We assume that the condition $\{q_{k,k-1}', q_{k,k+1}'\}=\{-1\}$ doesn't hold in this case to avoid repetition.
  We get $a_{13}^{r_k(X)}=0$ by assumption.
  Then $q_{k,k-1}'q_{k,k+1}'=1$ by~\cite[Lemma 1.4]{W-17} and hence $q_{k,k-1}'\not=-1$.
  Then $\cC$ is standard.
  We obtain that if $k=2$ then $\cD=\cD_{11,1}$ and if $k=3$ then  $\cD=\cD_{74}$.

  Subcase $b_4$
  Consider that $(3)_{q_{44}}=0$
  and $q_{ii}q_{i,i+1}'-1=q_{jj}q_{j-1,j}'-1=0$ for all $i\in \{1,2,3\}$ and $j\in \{2,3\}$.
  We assume that $q_{44}^2q_{34}'-1\not=0$ to avoid repetition.
  Set $q:=q_{22}$ and $\zeta:=q_{33}$.
  The reflection of $X$ is
  \setlength{\unitlength}{1mm}
 \begin{align*}
X:
 \Dchainfour{4}{$q$}{$q^{-1}$}{$q$}{$q^{-1}$}%
{$q$}{$q^{-1}$}{$\zeta$}
\quad \Rightarrow \quad
r_4(X):
\Dchainfour{}{$q$}{$q^{-1}$}{$q$}{$q^{-1}$}%
{$\zeta q^{-1}$}{$q\zeta^{-1}$}{$\zeta$}
\end{align*}
with $(3)_{\zeta}=0$ and $q\notin \{\zeta, \zeta^{-1}\}$.
we get that $\zeta q^{-2}=1$ or $\zeta q^{-1}=-1$ by~Lemma~\ref{le:Dynkin} on $A^{r_4(X)}=B_4$.
Hence $q=-\zeta^{-1}$, $p\not=2$ or $q=-\zeta$, $p\not=2$.
If $q=-\zeta^{-1}$ and $p\not=2, 3$,
then $\cC(M)$ is standard and $\cD=\cD_{15,1}$.
If $q=-\zeta$ and $p\not=2,3$,
then $X$ has a good $B_4$ neighborhood and $\cD=\cD_{17,1}$.
If $p=3$,
then we get $q=-1$ for $q=-\zeta^{-1}$ and $q=-\zeta$.
Then $\cC(M)$ is standard and $\cD=\cD_{15',1}$.

Subcase $b_5$.
  Consider that there exists $k\in \{2,3\}$ such that $q_{kk}=q_{11}=-1$ and $q_{44}^2q_{34}'-1=q_{jj}q_{j,j+1}'-1=q_{jj}q_{j-1,j}'-1=0$ for $j\in \{2,3\}\setminus \{k\}$.
  We assume that $q_{12}'\not=-1$.
  Then $q_{k,k-1}'q_{k,k+1}'=1$ by $A^{r_k(X)}=B_4$.
  Hence $\cC$ is standard.
  If $k=2$ then $\cD=\cD_{72}$.
  If $k=3$ then $\cD=\cD_{11,3}$.

  Subcase $b_6$.
  Consider that $q_{11}=-1$, $(3)_{q_{44}}=0$, $q_{12}'\not=-1$, $q_{44}^2q_{34}'-1\not=0$, and $q_{ii}q_{i,i+1}'-1=q_{ii}q_{i-1,i}'-1=0$, for all $i\in \{2,3\}$.
  Let $q:=q_{22}$ and $\zeta:=q_{44}$.
  Then $(3)_{\zeta}=0$ and $q\notin \{-1,\zeta,\zeta^{-1}\}$.
  By~Lemma~\ref{le:Dynkin} on $A^{r_4(X)}=B_4$ the reflections of $X$
  \setlength{\unitlength}{1mm}
 \begin{align*}
 r_1(X):
 \Dchainfour{1}{$-1$}{$q$}{$-1$}{$q^{-1}$}%
{$q$}{$q^{-1}$}{$\zeta$}
 \Rightarrow
 X:
 \Dchainfour{4}{$-1$}{$q^{-1}$}{$q$}{$q^{-1}$}%
{$q$}{$q^{-1}$}{$\zeta$}
\end{align*}
\begin{align*}
\Rightarrow
r_4(X):
\Dchainfour{}{$-1$}{$q^{-1}$}{$q$}{$q^{-1}$}%
{$\zeta q^{-1}$}{$q\zeta^{-1}$}{$\zeta$}
\end{align*}
imply that $\zeta q^{-2}=1$ or $\zeta q^{-1}=-1$.
Then $q=-\zeta^{-1}$, $p\not=2$ or $q=-\zeta$, $p\not=2$.
If $q=-\zeta^{-1}$ and $p\not=2, 3$,
then $\cC(M)$ is standard and $\cD=\cD_{16,1}$.
If $q=-\zeta^{-1}$ and $p=3$,
then $q=-1$ and $\cD=\cD_{15',1}$.
If $q=-\zeta$,
then one of the the generalized Dynkin subdiagram of $r_1(X)$ is~
\Dchainthree{}{$-1$}{$-\zeta^{-1}$}%
{$-\zeta$}{$-\zeta^{-1}$}{$\zeta$}~,
which does not appear in~\cite[Tables 1-3]{W-17}.
Hence we get a contradiction.

Subcase $b_7$.
  Consider that $q_{22}=q_{33}=-1$ and $q_{44}^2q_{34}'-1=q_{11}q_{12}'-1=0$.
  Set $q:=q_{12}'$, $r:=q_{23}'$, and $s:=q_{34}'$.
  We assume that the condition $\{q, r\}=\{-1\}$ doesn't hold and neither does the condition $\{r,s\}=\{-1\}$.
  Then by~\cite[Lemma 1.4]{W-17} for $A^{r_{2}(X)}=A^{r_{3}(X)}=B_4$ we get $qr=rs=1$.
  Hence $\cC$ is standard and $\cD=\cD_{73}$.

Subcase $b_8$.
  Consider that $q_{22}=-1$, $(3)_{q_{44}}=0$, $q_{44}^2q_{34}'-1\not=0$, and $q_{ii}q_{i,i+1}'-1=q_{33}q_{23}'-1=0$ for all $i\in \{1,3\}$.
  Set $q:=q_{12}'$, $r:=q_{23}'$, and $\zeta:=q_{44}$.
  Assume that $\{q,r\}=\{-1\}$ doesn't hold.
  Then $(3)_{\zeta}=0$ and $r\notin \{\zeta,\zeta^{-1}\}$.
  Then by~\cite[Lemma 1.4]{W-17} on $A^{r_2(X)}=B_4$ we get $qr=1$ and $r\not=-1$.
  Then by~Lemma~\ref{le:Dynkin} the reflections of $X$
  \setlength{\unitlength}{1mm}
 \begin{align*}
 X:~
 \Dchainfour{4}{$q^{-1}$}{$q$}{$-1$}{$q^{-1}$}%
{$q$}{$q^{-1}$}{$\zeta$}
\quad \Rightarrow \quad
r_4(X):~
\Dchainfour{}{$q^{-1}$}{$q$}{$-1$}{$q^{-1}$}%
{$\zeta q^{-1}$}{$q\zeta^{-1}$}{$\zeta$}
\end{align*}
imply that $\zeta q^{-2}=1$ or $\zeta q^{-1}=-1$ by $A^{r_4(X)}=B_4$.
Hence $q=-\zeta^{-1}$, $p\not=2$ or $q=-\zeta$, $p\not=2$.
If $q=-\zeta^{-1}$ and $p\not=2,3$,
then $\cC(M)$ is standard and $\cD=\cD_{19,1}$.
If $q=-\zeta^{-1}$ and $p=3$,
then $q=-1$ and $\cD=\cD_{15',1}$.
If $q=-\zeta$,
then one of the the generalized Dynkin subdiagram of $X$ is
\Dchainthree{}{$-1$}{$-\zeta^{-1}$}%
{$-\zeta$}{$-\zeta^{-1}$}{$\zeta$}~,
which does not appear in~\cite[Tables 1-3]{W-17}.
Hence we get a contradiction.

 Subcase $b_{9}$.
 Consider that $q_{33}=-1$, $(3)_{q_{44}}=0$, $q_{44}^2 q_{34}'-1\not=0$, and $q_{ii}q_{i,i+1}'-1=q_{22}q_{12}'-1=0$, for all $i\in \{1,2\}$.
 In this case we assume that the condition $\{q_{23}, q_{34}'\}=\{-1\}$ doesn't hold.
 Set $q:=q_{11}$, $r:=q_{34}'$, and $\zeta:=q_{44}$.
 We obtain that $q=r$ by $A^{r_3(X)}=B_4$.
 Then $(3)_{\zeta}=0$ and $q\notin \{-1, \zeta, \zeta^{-1}\}$.
 Hence the reflection of $X$
 \setlength{\unitlength}{1mm}
 \begin{align*}
X:~
\Dchainfour{4}{$q$}{$q^{-1}$}{$q$}{$q^{-1}$}%
{$-1$}{$q$}{$\zeta$}
\quad \Rightarrow \quad
r_4(X):~
\Dchainfour{}{$q$}{$q^{-1}$}{$q$}{$q^{-1}$}%
{$-\zeta q^2$}{$(\zeta q)^{-1}$}{$\zeta$}
\end{align*}
imply that $\zeta q^2=1$ or $-\zeta q^2 (\zeta q)^{-1}= -\zeta q^2 q^{-1}=1$ by~Lemma~\ref{le:Dynkin} on $A^{r_4(X)}=B_4$.
Then $q=-\zeta$, $p\not=2,3$.
 Hence $\cC$ is standard and $\cD=\cD_{16,4}$.

Subcase $b_{10}$.
 Consider that $q_{11}=q_{22}=q_{33}=-1$, $q_{44}^2 q_{34}'-1=0$, and $q_{12}'\not=-1$.
 Assume that the condition $\{q_{23}',q_{34}'\}=\{-1\}$ doesn't hold in this case.
 Set $q:=q_{12}'$, $r:=q_{23}'$, and $s:=q_{44}$.
 Then we get that $qr=1$, $r=s^2\notin \{1,-1\}$ by $A^{r_2(X)}=A^{r_3(X)}=B_4$.
 Hence $\cD=\cD_{11,2}$.

  Subcase $b_{11}$.
 Consider that $q_{11}=q_{22}=-1$, $(3)_{q_{44}}=0$, $q_{12}'\not=-1$, $q_{44}^2 q_{34}'-1\not=0$, and $q_{33}q_{23}'-1=q_{33}q_{34}'-1=0$.
 Let $q:=q_{12}'$, $r:=q_{23}'$, and $\zeta:=q_{44}$.
 Then $(3)_{\zeta}=0$ and $q\notin \{-1,\zeta,\zeta^{-1}\}$.
 By $A^{r_2(X)}=B_4$ and $q\not=-1$,
 we get that $qr=1$.
 Hence by the assumption and~\cite[lemma 1.4]{W-17} we obtain the reflections of $X$
 \setlength{\unitlength}{1mm}
 \begin{align*}
X:~
\Dchainfour{4}{$-1$}{$q$}{$-1$}{$q^{-1}$}%
{$q$}{$q^{-1}$}{$\zeta$}
\quad \Rightarrow
r_4(X):~
\Dchainfour{3}{$-1$}{$q$}{$-1$}{$q^{-1}$}
{$\zeta q^{-1}$}{$ q\zeta^{-1}$}{$\zeta$}
\end{align*}
\begin{align*}
\quad \Rightarrow
r_3r_4(X):
\Drightofway{}{$-1$}{$q$}{-$\zeta q^{-2}$}{$(\zeta q)^{-1}$}{$\zeta^{-1}$}
{$\zeta q^{-1}$}{$q\zeta^{-1}$}{$\zeta$}
\end{align*}
 with $(3)_{\zeta}=0$ and $q\notin \{-1, \zeta, \zeta^{-1}\}$.
 Hence we get $-\zeta q^{-1}=1$ or $\zeta q^{-2} =1$ by~Lemma~\ref{le:Dynkin} on $A^{r_4(X)}=B_4$.
 Then $q=-\zeta$, $p\not=2,3$ or $q=-\zeta^{-1}$, $p\not=2,3$ by $q\notin \{-1, \zeta, \zeta^{-1}\}$.
 If $q=-\zeta^{-1}$ and $p\not=2,3$,
 then $\cC$ is standard and $\cD=\cD_{16,2}$.
 If $q=-\zeta$ and $p\not=2,3$,
 then $a_{24}^{r_3r_4(X)}=-2$,
 which is a contradiction.

  Subcase $b_{12}$.
 Consider that $q_{11}=q_{33}=-1$, $(3)_{q_{44}}=0$, $q_{12}'\not=-1$, $q_{44}^2 q_{34}'-1\not=0$, and $q_{22}q_{12}'-1=q_{22}q_{23}'-1=0$.
 Let $q:=q_{23}'$, $r:=q_{34}'$, and $\zeta:=q_{44}$.
 Then $q_{12}'=q \not=-1$.
 Then $qr=1$ by $A^{r_3(X)}=B_4$.
 Hence by~\cite[Lemma 1.4]{W-17}the reflection of $X$ is
 \setlength{\unitlength}{1mm}
 \begin{align*}
X:~
\Dchainfour{4}{$-1$}{$q$}{$q^{-1}$}{$q$}%
{$-1$}{$q^{-1}$}{$\zeta$}
\quad \Rightarrow \quad
r_4(X):~
\Dchainfour{}{$-1$}{$q$}{$q^{-1}$}{$q$}%
{$-\zeta q^{-2}$}{$ q\zeta^{-1}$}{$\zeta$}
\end{align*}
 with $(3)_{\zeta}=0$ and $q\notin\{-1, \zeta, \zeta^{-1}\}$.
 We obtain that $\zeta q^{-2}=1$ or $-\zeta q^{-2} \zeta^{-1} q= -\zeta q^{-2} q=1$ by~Lemma~\ref{le:Dynkin}$A^{r_4(X)}=B_4$.
 Then $q=-\zeta^{-1}$ and $p\not=2,3$ by $q\notin\{-1, \zeta, \zeta^{-1}\}$.
 Hence $\cC$ is standard and $\cD=\cD_{19,3}$.

 Subcase $b_{13}$.
 Consider that $q_{22}=q_{33}=-1$, $(3)_{q_{44}}=0$, $q_{44}^2 q_{34}'-1\not=0$, and $q_{11}q_{12}'-1=0$.
 In this case we assume that the condition $\{q_{12}',q_{23}'\}=\{-1\}$ doesn't hold and nor does the condition $\{q_{23},q_{34}'\}=\{-1\}$.
 Set $q:=q_{11}$, $r:=q_{12}'$, $s:=q_{34}'$, and $\zeta:=q_{44}$.
 Then $q=r=s^{-1}q\not=-1$ and $p\not=2$ by~\cite[Lemma 1.4]{W-17} on $A^{r_2(X)}=A^{r_3(X)}=B_4$.
 Then we get that $(3)_{\zeta}=0$ and $q\notin\{\zeta, \zeta^{-1}, -1\}$.
 Hence by~\cite[Lemma 1.4]{W-17} the reflection of $X$
 \setlength{\unitlength}{1mm}
 \begin{align*}
X:
\Dchainfour{4}{$q$}{$q^{-1}$}{$-1$}{$q$}%
{$-1$}{$q^{-1}$}{$\zeta$}
\quad \Rightarrow \quad
r_4(X):
\Dchainfour{}{$q$}{$q^{-1}$}{$-1$}{$q$}%
{$-\zeta q^{-2}$}{$ q\zeta^{-1}$}{$\zeta$}
\end{align*}
 implies that $\zeta q^{-2}=1$ or $-\zeta q^{-2} \zeta^{-1} q= -\zeta q^{-2} q=1$ by $A^{r_4(X)}=B_4$.
 Then $q=-\zeta^{-1}$ and $p\not=2,3$ by $q\notin \{-1,\zeta^{-1}\}$.
 Hence $\cC$ is standard and $\cD=\cD_{16,3}$.

Subcase $b_{14}$.
Consider that $q_{11}=q_{22}=q_{33}=-1$, $(3)_{q_{44}}=0$, $q_{12}'\not=-1$, and $q_{44}^2 q_{34}'-1\not=0$.
Assume that the condition $\{q_{23}',q_{34}'\}=\{-1\}$ doesn't hold in this case.
We get that $q_{12}'q_{23}'=q_{23}'q_{34}'=1$ by~\cite[Lemma 1.4]{W-17} on $A^{r_2(X)}=A^{r_3(X)}=B_4$.
Set $q:=q_{12}'$ and $\zeta:=q_{44}$.
Then $(3)_{\zeta}=0$ and $q\notin \{-1,  \zeta, \zeta^{-1}\}$.
By~\cite[Lemmas 1.3, 1.4]{W-17} and $A^{r_4(X)}=B_4$ the reflection of $X$
 \setlength{\unitlength}{1mm}
 \begin{align*}
X:~
\Dchainfour{4}{$-1$}{$q$}{$-1$}{$q^{-1}$}%
{$-1$}{$q$}{$\zeta$}
\quad \Rightarrow \quad
r_4(X):~
\Dchainfour{}{$-1$}{$q$}{$-1$}{$q^{-1}$}%
{-$\zeta q^{2}$}{$ (q\zeta)^{-1}$}{$\zeta$}
\end{align*}
implies that $q=-\zeta$ and $p\not=2,3$
Hence $\cD=\cD_{19,2}$.

 Case (c). Assume that $\cC(M)$ is standard of type $C_4$.
 By $a_{32}=-1$ and $a_{34}=-2$ we get that $q_{33}q_{23}'-1=(3)_{q_{33}}(q_{33}^2 q_{34}'-1)=0$ by~Lemma~\ref{le:Dynkin}.
 Further, by the assumption we obtain the equations $(2)_{q_{ii}}(q_{ii}q_{i,i+1}'-1)=(2)_{q_{jj}}(q_{jj}q_{j,j-1}'-1)=0$ for all $i\in \{1, 2\}$ and $j\in \{2, 4\}$,
 Here we distinguish two cases: $c_1$ and $c_2$.

 Subcase $c_1$.
 Assume that $$q_{33}^2 q_{34}'-1=q_{33}q_{23}'-1=(2)_{q_{ii}}(q_{ii}q_{i,i+1}'-1)=(2)_{q_{jj}}(q_{jj}q_{j,j-1}'-1)=0$$
 for all $i\in \{1, 2\}$ and $j\in \{2, 4\}$.
 Let $q:=q_{33}$.
 Then $q_{34}'=q^{-2}\not=1$ and $q_{23}'=q^{-1}\not=-1$.
 Hence $q_{22}q_{23}'=1$.
 Otherwise,
 if $q_{22}q_{23}'\not=1$
 then $q_{22}=-1$ and hence $a_{34}^{r_2(X)}=-1$, which is a contradiction.
 Then $q_{22}q_{12}'=1$ by $a_{21}=-1$.
 Hence $q_{11}q_{12}'=1$ and $q_{11}\not=-1$.
 Otherwise,
 if $q_{11}=-1$ then by~Lemma~\ref{le:Dynkin} and \cite[Lemma 1.4]{W-17} we obtain $a_{34}^{r_2 r_1(X)}=-1$,
 which is again a contradiction.
 If $q_{44}q_{34}'-1=0$,
 then $\cD=\cD_{31}$.
 If $q_{44}=-1$ and $q_{34}'\not=-1$,
 then the reflection of $X$
 \setlength{\unitlength}{1mm}
 \begin{align*}
X:~\Dchainfour{4}{$q$}{$q^{-1}$}{$q$}{$q^{-1}$}%
{$q$}{$q^{-2}$}{$-1$}
\quad \Rightarrow \quad
r_4(X):~
\Dchainfour{}{$q$}{$q^{-1}$}{$q$}{$q^{-1}$}%
{$-q^{-1}$}{$q^{2}$}{$-1$}
\end{align*}
 gives that $-q^{-1} q^{-1}=1$ by~Lemma~\ref{le:Dynkin} on $a_{32}^{r_4(X)}=-1$.
 Then $q_{34}'=q^{-2}=-1$, which is a contradiction.

 Subcase $c_2$.
 Assume that $$(3)_{q_{33}}=(2)_{q_{ii}}(q_{ii}q_{i,i+1}'-1)=(2)_{q_{jj}}(q_{jj}q_{j,j-1}'-1)=0$$
 for all $i\in \{1, 2\}$ and $j\in \{2, 4\}$.
 To avoid repetition we assume that the condition $q_{33}^2 q_{34}'-1\not=0$ holds.
 Set $\zeta:=q_{33}$.
 Then $q_{34}'\notin \{\zeta,\zeta^{-1}\}$.
 By $q_{33}q_{23}'-1=0$ we get $q_{23}'=\zeta^{-1}$.
 Since $A^{r_3}(X)=C_4$,
 we get that $q_{23}'q_{34}'=1$ and hence $q_{34}'=\zeta$,
 which is a contradiction.

 Case (d). Assume that $\cC(M)$ is standard of type $D_4$.
 By~Lemma~\ref{le:Dynkin} on $A^X=D_4$ we obtain $(2)_{q_{22}}(q_{22}q_{2i}'-1)=(2)_{q_{ii}}(q_{ii}q_{2i}'-1)=0$ for all $i\in \{1, 3, 4\}$,
 Then we distinguish three cases: $d_1$, $d_2$ and $d_3$.

 Subcase $d_1$.
 Assume that $q_{22}q_{2i}'-1=q_{ii}q_{2i}'-1=0$ for all $i\in \{1, 3, 4\}$.
 Then $\cD=\cD_{51}$.

 Subcase $d_2$.
 Assume that $q_{22}q_{2,i}'-1=0$ for all $i\in \{1, 3, 4\}$ and there exists $l\in \{1,3,4\}$ such that $(2)_{q_{ll}}=(2)_{q_{jj}}(q_{jj}q_{2j}'-1)=0$ for any $j\in \{1, 3, 4\} \setminus \{l\}$.
 Further, we assume that $q_{2l}'\not=-1$ to avoid the repetition.
 Set $q:=q_{22}$, $r:=q_{33}$ and $s:=q_{44}$.
 Then $q\not=-1$.
 By~\cite[Lemma 1.4]{W-17} the reflection of $X$
 \begin{align*}
X:
 \tau_{l2jk}\Dthreefork{l}{$-1$}{$q^{-1}$}{$q$}{$q^{-1}$}{$q^{-1}$}{$r$}{$s$}
 \quad \Rightarrow \quad
 r_l(X):
 \tau_{l2jk} \Dthreefork{}{$-1$}{$q$}{$-1$}{$q^{-1}$}{$q^{-1}$}{$r$}{$s$}
 \end{align*}
 with $j\not=k$ and $j, k\in \{1, 3, 4\}\backslash \{l\}$.
 Since $\cC$ is standard,
 we get that $q^{-2}=1$ by~\cite[Lemma 1.4]{W-17} on $A^{r_2r_l(X)}=D_4$,
 which is a contradiction.

 Subcase $d_3$.
 Assume that $(2)_{q_{22}}=(2)_{q_{ii}}(q_{ii}q_{2i}'-1)=0$ for all $i\in \{1, 3, 4\}$.
 Set $q:=q_{21}'$, $r:=q_{23}'$ and $s:=q_{24}'$.
 Assume that the condition $\{q,r,s\}=\{-1\}$ doesn't hold.
 Since $A^{r_2(X)}=D_4$,
 we get $q=r=s=-1$ or $qr=rs=qs=1$ by~\cite[Lemma 1.4]{W-17}.
 Hence $q=r=s=-1$,
 which is a contradiction.

 Case (f).
 Assume that $\cC(M)$ is standard of type $F_4$.
 By $a_{21}=-1$ and $a_{23}=-2$ we obtain $q_{22}q_{12}'-1=(3)_{q_{22}}(q_{22}^2 q_{23}'-1)=0$ by~Lemma~\ref{le:Dynkin}.
 Further, by~Lemma~\ref{le:Dynkin} on $A^X=F_4$ we get $(2)_{q_{ii}}(q_{ii}q_{i,i+1}'-1)=(2)_{q_{jj}}(q_{jj}q_{j,j-1}'-1)=0$ for all $i\in \{1, 3\}$ and $j\in \{2, 3, 4\}$,
 Then we distinguish two cases $f_1$ and $f_2$.

 Subcase $f_1$.
 Assume that
 $$q_{22}^2 q_{23}'-1=q_{22}q_{12}'-1=(2)_{q_{ii}}(q_{ii}q_{i,i+1}'-1)=(2)_{q_{jj}}(q_{jj}q_{j,j-1}'-1)=0$$
 for all $i\in \{1, 3\}$ and $j\in \{2, 3, 4\}$.
 Let $q:=q_{22}$.
 Then $q_{23}'=q^{-2}\not=1$ and $q_{12}'=q^{-1}\not=-1$ by $q_{22}q_{12}'-1=0$.
 Hence $q_{11}q_{12}'=1$ by $(2)_{q_{11}}(q_{11}q_{12}'-1)=0$.
 Otherwise,
 if $q_{11}=-1$ then by~\cite[Lemmas 1.3 and 1.4]{W-17} we get $a_{23}^{r_1(X)}=-1$,
 which is a contradiction.
 If $q_{33}=-1$,
 then $q_{22}^{r_3(X)}=-q^{-1}$ and $(q_{12}q_{21})^{r_3(X)}=q_{12}'= q^{-1}$ by~\cite[Lemma 1.4]{W-17}.
 Hence $q^2=-1$ by $a_{21}^{r_3(X)}=-1$.
 Then $q_{34}'=q_{44}=-1$ and hence $\cD=\cD_{41}$, where $q^2=-1$.
 If $q_{33}\not=-1$ and $q_{44}=-1$,
 then $q_{33}q_{23}'-1=q_{33}q_{34}'-1=0$ and $q_{33}=q^2\not=-1$.
 Then $q_{22}^{r_3r_4(X)}=-q^{-1}$ and $(q_{12}q_{21})^{r_3r_4(X)}=q^{-1}$ by~\cite[Lemma 1.4]{W-17}.
 Then by $a_{21}^{r_3r_4(X)}=-1$ we obtain that $q^2=-1$,
 which is a contradiction.
 If $q_{33}\not=-1$ and $q_{44}\not=-1$,
 Then $q_{44}=q^2$ and hence $\cD=\cD_{41}$.

 Subcase $f_2$.
 Assume that $$(3)_{q_{22}}=q_{22}q_{12}'-1=(2)_{q_{ii}}(q_{ii}q_{i,i+1}'-1)=(2)_{q_{jj}}(q_{jj}q_{j,j-1}'-1)=0$$
 for all $i\in \{1, 3\}$ and $j\in \{2, 3, 4\}$.
 Assume that the condition $q_{22}^2 q_{23}'-1\not=0$ holds to avoid the repetition.
 Let $\zeta:=q_{22}$.
 Then $q_{23}'=\zeta$.
 Since $A^{r_2(X)}=C_4$,
 which is a contradiction.
  \end{enumerate}
\end{proof}
\newpage

%\section{Connected arithmetic root systems of rank 4}\label{sec-Ars4}
%%%%%%%%%%%%%%%%%%%%%%%%%%%%%%%%%%%%%%%%%%%%%%%%%%%%%%%%%%%%%%%%%%%%%%%%%%%%%%%%%%%%%%%%%%%%%%%%%%%%%%%%%%%%%%%%%%%%%%%%%%%%%%%%%%%%%%%%%%%%%%%%
%%%%%%%%%%%%%%%%%%%%%%%%%%%%%%%%%%%%%%%%%%%%%%%%%%%%%%%%%%%%%%%%%%%%%%%%%%%%%%%%%%%%%%%%%%%%%%%%%%%%%%%%%%%%%%%%%%%%%%%%%%%%%%%%%%%%%%%%%%%%%%%%
%%%%%%%%%%%%%% all p>0  put in one table%%%%%%%%%%%%%%%%%%%%%%%%%%%%%%%%%%%%%%%%%%%%%%%%%%%%%%%%%%%%%%%%%%%%%%%%%%%%%%%%%%%%%%%%%%%%%%%%%%%%%%
%%%%%%%%%%%%%%%%%%%%%%%%%%%%%%%%%%%%%%%%%%%%%%%%%%%%%%%%%%%%%%%%%%%%%%%%%%%%%%%%%%%%%%%%%%%%%%%%%%%%%%%%%%%%%%%%%%%%%%%%%%%%%%%%%%%%%%%%%%%%%%%%

\setlength{\unitlength}{1mm}
\begin{table}
\centering
\begin{tabular}{r|l|l|l|}
row & gener. Dynkin diagrams & \text{fixed parameters} &\text{char} $\Bbbk$ \\
\hline \hline
 1 & \Dchainfour{}{$q$}{$q^{-1}$}{$q$}{$q^{-1}$}{$q$}{$q^{-1}$}{$q$}
& $q\in k^\ast \setminus \{1\}$ & $p>0$\\
\hline
 2 & \Dchainfour{}{$q^2$}{$q^{-2}$}{$q^2$}{$q^{-2}$}{$q^2$}{$q^{-2}$}{$q$}
& $q\in k^\ast $, $q^2\not=1$  & $p>0$\\
\hline
 3 & \Dchainfour{}{$q$}{$q^{-1}$}{$q$}{$q^{-1}$}{$q$}{$q^{-2}$}{$q^2$}
& $q\in k^\ast $, $q^2\not=1$  & $p>0$\\
\hline
 4 & \Dchainfour{}{$q^2$}{$q^{-2}$}{$q^2$}{$q^{-2}$}{$q$}{$q^{-1}$}{$q$}
& $q\in k^\ast $, $q^2\not=1$  & $p>0$\\
\hline
 5 & \Dthreefork{}{$q$}{$q^{-1}$}{$q$}{$q^{-1}$}{$q^{-1}$}{$q$}{$q$}
& $q\in k^\ast \setminus \{1\}$ & $p>0$\\
\hline
6 & \Dchainfour{}{$-1$}{$q^{-1}$}{$q$}{$q^{-1}$}{$q$}{$q^{-1}$}{$q$} &
$q\in k^\ast $, $q^2\not=1$  & $p>0$\\
  & \Dchainfour{}{$-1$}{$q$}{$-1$}{$q^{-1}$}{$q$}{$q^{-1}$}{$q$} & & \\
  & \Dchainfour{}{$q$}{$q^{-1}$}{$-1$}{$q$}{$-1$}{$q^{-1}$}{$q$} & & \\
\hline
7 & \Dchainfour{}{$-1$}{$q^{-2}$}{$q^2$}{$q^{-2}$}{$q^2$}{$q^{-2}$}{$q$} &
$q\in k^\ast$, $q^4\not=1$ & $p>0$\\
  & \Dchainfour{}{$-1$}{$q^2$}{$-1$}{$q^{-2}$}{$q^2$}{$q^{-2}$}{$q$} & & \\
  & \Dchainfour{}{$q^2$}{$q^{-2}$}{$-1$}{$q^2$}{$-1$}{$q^{-2}$}{$q$} & & \\
  & \Dchainfour{}{$q^2$}{$q^{-2}$}{$q^2$}{$q^{-2}$}{$-1$}{$q^2$}{$-q^{-1}$}& & \\
\hline
8 & \Dchainfour{}{$-1$}{$q^{-1}$}{$q$}{$q^{-1}$}{$q$}{$q^{-2}$}{$q^2$} &
$q\in k^\ast $, $q^2\not=1$ & $p>0$\\
  & \Dchainfour{}{$-1$}{$q$}{$-1$}{$q^{-1}$}{$q$}{$q^{-2}$}{$q^2$} & & \\
  & \Dchainfour{}{$q$}{$q^{-1}$}{$-1$}{$q$}{$-1$}{$q^{-2}$}{$q^2$} & & \\
  & \Drightofway{}{$q$}{$q^{-1}$}{$q$}{$q^{-1}$}{$q^{-1}$}{$-1$}{$q^2$}{$-1$} & & \\
\hline
9 & \Dchainfour{}{$q^2$}{$q^{-2}$}{$q^2$}{$q^{-2}$}{$q$}{$q^{-1}$}{$-1$} &
$q\in k^\ast $, $q^2,q^3\not=1$ & $p>0$\\
 & \Dchainfour{}{$q^2$}{$q^{-2}$}{$q^2$}{$q^{-2}$}{$-1$}{$q$}{$-1$} & & \\
& \Drightofway{}{$q^2$}{$q^{-2}$}{$-1$}{$q^2$}{$q^{-1}$}{$-1$}{$q^{-1}$}{$q$}\ \quad \quad
\Drightofway{}{$q^2$}{$q^{-2}$}{$-1$}{$q^2$}{$q$}{$-1$}{$q^{-3}$}{$-1$} & & \\
& \Dchainfour{}{$q^2$}{$q^{-2}$}{$q^2$}{$q^{-2}$}{$-1$}{$q^3$}{$q^{-3}$} & &  \\
& \Dchainfour{}{$q^2$}{$q^{-2}$}{$q$}{$q^{-1}$}{$-1$}{$q^3$}{$q^{-3}$} & &  \\
\hline
\end{tabular}
\end{table}
%\caption{Weyl equivalence for connected rank 4 arithmetic root systems}
%\end{table}

\newpage
\setlength{\unitlength}{1mm}
\begin{table}
\centering
\begin{tabular}{r|l|l|l|}
row & gener. Dynkin diagrams & fixed param. &\text{char} $\Bbbk$ \\
\hline \hline
10 & \Dchainfour{}{$q^{-1}$}{$q$}{$-1$}{$q^{-1}$}{$q$}{$q^{-1}$}{$q$} &
$q\in k^\ast $, $q^2\not=1$  & $p>0$ \\
 & \Dchainfour{}{$-1$}{$q^{-1}$}{$-1$}{$q$}{$-1$}{$q^{-1}$}{$q$} & &  \\
 & \Dchainfour{}{$-1$}{$q$}{$q^{-1}$}{$q$}{$-1$}{$q^{-1}$}{$q$} & &  \\
 & \Dchainfour{}{$-1$}{$q^{-1}$}{$q$}{$q^{-1}$}{$-1$}{$q$}{$-1$} & & \\
 & \Dchainfour{}{$-1$}{$q^{-1}$}{$q$}{$q^{-1}$}{$q$}{$q^{-1}$}{$-1$} & & \\
 & \Dchainfour{}{$-1$}{$q$}{$-1$}{$q^{-1}$}{$-1$}{$q$}{$-1$} & & \\
\hline
11 & \Dchainfour{}{$q^{-2}$}{$q^2$}{$-1$}{$q^{-2}$}{$q^2$}{$q^{-2}$}{$q$} &
$q\in k^\ast $, $q^4\not=1$ & $p>0$\\
 & \Dchainfour{}{$-1$}{$q^{-2}$}{$-1$}{$q^2$}{$-1$}{$q^{-2}$}{$q$} & & \\
 & \Dchainfour{}{$-1$}{$q^2$}{$q^{-2}$}{$q^2$}{$-1$}{$q^{-2}$}{$q$} & & \\
 & \Dchainfour{}{$-1$}{$q^{-2}$}{$q^2$}{$q^{-2}$}{$-1$}{$q^2$}{$-q^{-1}$} & & \\
 & \Dchainfour{}{$-1$}{$q^2$}{$-1$}{$q^{-2}$}{$-1$}{$q^2$}{$-q^{-1}$} & & \\
 & \Dchainfour{}{$q^2$}{$q^{-2}$}{$-1$}{$q^2$}{$q^{-2}$}{$q^2$}{$-q^{-1}$} & &  \\
\hline
12 & \Dchainfour{}{$q^{-1}$}{$q$}{$-1$}{$q^{-1}$}{$q$}{$q^{-2}$}{$q^2$} &
$q\in k^\ast $, $q^2\not=1$  & $p>0$\\
 & \Dchainfour{}{$-1$}{$q^{-1}$}{$-1$}{$q$}{$-1$}{$q^{-2}$}{$q^2$} & & \\
 & \Dchainfour{}{$-1$}{$q$}{$q^{-1}$}{$q$}{$-1$}{$q^{-2}$}{$q^2$} & &  \\
 & \Drightofway{}{$-1$}{$q^{-1}$}{$q$}{$q^{-1}$}{$q^{-1}$}{$-1$}{$q^2$}{$-1$}\  \quad \quad
\Drightofway{}{$-1$}{$q$}{$-1$}{$q^{-1}$}{$q^{-1}$}{$-1$}{$q^2$}{$-1$} & & \\
 & \Dthreefork{}{$q$}{$q^{-1}$}{$-1$}{$q$}{$q$}{$q^{-1}$}{$q^{-1}$} & &  \\
\hline
13 & \Dchainfour{}{$q$}{$q^{-1}$}{$q$}{$q^{-1}$}{$-1$}{$q^2$}{$q^{-2}$} &
$q\in k^\ast $, $q^2\not=1$  & $p>0$\\
 & \Drightofway{}{$q$}{$q^{-1}$}{$-1$}{$q$}{$q$}{$-1$}{$q^{-2}$}{$-1$}\  \quad \quad
\Dthreefork{}{$-1$}{$q$}{$-1$}{$q^{-1}$}{$q^{-1}$}{$q$}{$q$} & & \\
 & \Dthreefork{}{$-1$}{$q^{-1}$}{$q$}{$q^{-1}$}{$q^{-1}$}{$q$}{$q$} & & \\
\hline
\end{tabular}
\end{table}
%\caption{Weyl equivalence for connected rank 4 arithmetic root systems}
%\end{table}

%\newpage
\setlength{\unitlength}{1mm}
\begin{table}
\centering
\begin{tabular}{r|l|l|l|}
row & gener. Dynkin diagrams & fixed param. &\text{char} $\Bbbk$ \\
\hline \hline
14 & \Dchainfour{}{$q$}{$q^{-1}$}{$q$}{$q^{-1}$}{$-1$}{$-q$}{$-q^{-1}$} &
$q\in k^\ast $, $q^2\not=1$ & $p>2$ \\
 & \Drightofway{}{$q$}{$q^{-1}$}{$-1$}{$q$}{$-1$}{$-1$}{$-q^{-1}$}{$-1$}\   \quad \quad
\Drightofway{}{$-q^{-1}$}{$-q$}{$-1$}{$-q^{-1}$}{$-1$}{$-1$}{$q$}{$-1$} & & \\
 & \Dchainfour{}{$q$}{$q^{-1}$}{$-1$}{$-1$}{$-1$}{$-q$}{$-q^{-1}$} & & \\
 & \Dchainfour{}{$-q^{-1}$}{$-q$}{$-q^{-1}$}{$-q$}{$-1$}{$q^{-1}$}{$q$} & & \\
\hline
15 & \Dchainfour{}{$-\zeta ^{-1}$}{$-\zeta $}{$-\zeta ^{-1}$}%
{$-\zeta $}{$-\zeta ^{-1}$}{$-\zeta $}{$\zeta $}
& $\zeta \in G_3'$ & $p>3$\\
\hline
$15'$ & \Dchainfour{}{$-1$}{$-1$}{$-1$}%
{$-1$}{$-1$}{$-1$}{$1$}
&  & $p=3$ \\
\hline
16 & \Dchainfour{}{$-1$}{$-\zeta $}{$-\zeta ^{-1}$}{$-\zeta $}%
{$-\zeta ^{-1}$}{$-\zeta $}{$\zeta $} & $\zeta \in G_3'$& $p>3$ \\
  & \Dchainfour{}{$-1$}{$-\zeta ^{-1}$}{$-1$}{$-\zeta $}%
{$-\zeta ^{-1}$}{$-\zeta $}{$\zeta $} & & \\
  & \Dchainfour{}{$-\zeta ^{-1}$}{$-\zeta $}{$-1$}{$-\zeta ^{-1}$}%
{$-1$}{$-\zeta $}{$\zeta $} & &  \\
  & \Dchainfour{}{$-\zeta ^{-1}$}{$-\zeta $}{$-\zeta ^{-1}$}{$-\zeta $}%
{$-1$}{$-\zeta ^{-1}$}{$\zeta ^{-1}$} & & \\
\hline
17 & \Dchainfour{}{$-\zeta $}{$-\zeta ^{-1}$}{$-\zeta $}{$-\zeta ^{-1}$}%
{$-\zeta $}{$-\zeta ^{-1}$}{$\zeta $} & $\zeta \in G_3'$ & $p>3$\\
  & \Dchainfour{}{$-\zeta $}{$-\zeta ^{-1}$}{$-\zeta $}{$-\zeta ^{-1}$}%
{$-1$}{$-1$}{$\zeta $} & &  \\
  & \Drightofway{}{$-\zeta $}{$-\zeta ^{-1}$}{$-1$}{$-\zeta $}%
{$\zeta ^{-1}$}{$-1$}{$-1$}{$\zeta $} \quad \quad
 \Drightofway{}{$-\zeta $}{$-\zeta ^{-1}$}{$-1$}{$-\zeta $}%
{$\zeta $}{$-1$}{$-\zeta $}{$-1$} & &  \\
  & \Dchainfour{}{$-\zeta $}{$-\zeta ^{-1}$}{$\zeta $}{$\zeta ^{-1}$}%
{$-1$}{$-\zeta ^{-1}$}{$-\zeta $} & & \\
  & \Dchainfour{}{$-\zeta $}{$-\zeta ^{-1}$}{$-\zeta $}{$-\zeta ^{-1}$}%
{$-1$}{$-\zeta ^{-1}$}{$-\zeta $} & & \\
\hline
\end{tabular}
\end{table}
%\caption{Weyl equivalence for connected rank 4 arithmetic root systems}
%\end{table}

%\addtocounter{table}{-1}

%\begin{table}
\begin{table}
\centering
\begin{tabular}{r|l|l|l|}
row & gener. Dynkin diagrams & fixed param. &\text{char} $\Bbbk$ \\
\hline \hline
18 & \Dchainfour{}{$\zeta ^{-1}$}{$\zeta $}{$\zeta ^{-1}$}{$\zeta $}%
{$\zeta $}{$\zeta ^{-1}$}{$-1$} & $\zeta \in G_3'$ & $p\not=3$\\
  & \Dchainfour{}{$\zeta ^{-1}$}{$\zeta $}{$\zeta ^{-1}$}{$\zeta $}%
{$-1$}{$\zeta $}{$-1$} & & \\
  & \Drightofway{}{$\zeta ^{-1}$}{$\zeta $}{$-1$}{$\zeta ^{-1}$}%
{$\zeta ^{-1}$}{$-1$}{$\zeta ^{-1}$}{$\zeta $} \quad \quad
  \Dthreefork{}{$\zeta ^{-1}$}{$\zeta $}{$-1$}{$\zeta ^{-1}$}%
{$\zeta $}{$-1$}{$-1$} & & \\
  & \Dthreefork{}{$\zeta ^{-1}$}{$\zeta $}{$\zeta $}{$\zeta ^{-1}$}%
{$\zeta ^{-1}$}{$-1$}{$-1$} \quad \quad
  \Dthreefork{}{$\zeta ^{-1}$}{$\zeta $}{$\zeta ^{-1}$}{$\zeta $}%
{$\zeta $}{$-1$}{$-1$} & & \\
\hline
19 & \Dchainfour{}{$-\zeta $}{$-\zeta ^{-1}$}{$-1$}{$-\zeta $}{$-\zeta ^{-1}$}%
{$-\zeta $}{$\zeta $} & $\zeta \in G_3'$ &$p>3$\\
  & \Dchainfour{}{$-1$}{$-\zeta $}{$-1$}{$-\zeta ^{-1}$}{$-1$}%
{$-\zeta $}{$\zeta $} & & \\
  & \Dchainfour{}{$-1$}{$-\zeta ^{-1}$}{$-\zeta $}{$-\zeta ^{-1}$}{$-1$}%
{$-\zeta $}{$\zeta $} & & \\
  & \Dchainfour{}{$-1$}{$-\zeta $}{$-\zeta ^{-1}$}{$-\zeta $}{$-1$}%
{$-\zeta ^{-1}$}{$\zeta ^{-1}$} & &  \\
  & \Dchainfour{}{$-1$}{$-\zeta ^{-1}$}{$-1$}{$-\zeta $}{$-1$}%
{$-\zeta ^{-1}$}{$\zeta ^{-1}$} & & \\
  & \Dchainfour{}{$-\zeta ^{-1}$}{$-\zeta $}{$-1$}{$-\zeta ^{-1}$}{$-\zeta $}%
{$-\zeta ^{-1}$}{$\zeta ^{-1}$} & & \\
\hline
20 & \Dchainfour{}{$\zeta ^{-1}$}{$\zeta $}{$-1$}{$\zeta ^{-1}$}{$\zeta $}%
{$\zeta $}{$-1$} & $\zeta \in G_3'$ &$p\not=3$ \\
 & \Dchainfour{}{$\zeta ^{-1}$}{$\zeta $}{$-1$}{$\zeta ^{-1}$}{$-\zeta ^{-1}$}%
{$\zeta ^{-1}$}{$-1$} & &\\
 & \Dchainfour{}{$-1$}{$\zeta ^{-1}$}{$-1$}{$\zeta $}{$-1$}%
{$\zeta $}{$-1$} & &\\
 & \Dchainfour{}{$-1$}{$\zeta $}{$\zeta ^{-1}$}{$\zeta $}{$-1$}%
{$\zeta $}{$-1$} & &\\
 & \Dchainfour{}{$-1$}{$\zeta ^{-1}$}{$-1$}{$\zeta $}{$\zeta $}%
{$\zeta ^{-1}$}{$-1$} & &\\
 & \Dchainfour{}{$-1$}{$\zeta $}{$\zeta ^{-1}$}{$\zeta $}{$\zeta $}%
{$\zeta ^{-1}$}{$-1$} & &\\
 & \Drightofway{}{$-1$}{$\zeta ^{-1}$}{$\zeta $}{$\zeta ^{-1}$}%
{$\zeta ^{-1}$}{$-1$}{$\zeta ^{-1}$}{$\zeta $}\ \quad \quad
 \Drightofway{}{$-1$}{$\zeta $}{$-1$}{$\zeta ^{-1}$}%
{$\zeta ^{-1}$}{$-1$}{$\zeta ^{-1}$}{$\zeta $} & &\\
 & \Dthreefork{}{$\zeta $}{$\zeta ^{-1}$}{$-1$}{$\zeta $}%
{$\zeta $}{$\zeta ^{-1}$}{$-1$}\ \quad \quad
 \Dthreefork{}{$\zeta $}{$\zeta ^{-1}$}{$\zeta $}{$\zeta $}%
{$\zeta ^{-1}$}{$\zeta ^{-1}$}{$-1$} & &\\
\hline
\end{tabular}
\end{table}
%\caption{Weyl equivalence for connected rank 4 arithmetic root systems}
%\end{table}

\begin{table}
\centering
\begin{tabular}{r|l|l|l|}
row & gener. Dynkin diagrams & fixed param.&\text{char} $\Bbbk$ \\
\hline \hline
21  & \Dchainfour{}{$-1$}{$\zeta ^{-1}$}{$\zeta $}{$\zeta ^{-1}$}%
{$\zeta $}{$\zeta $}{$-1$} & $\zeta \in G_3'$ &$p\not=3$ \\
  & \Dchainfour{}{$-1$}{$\zeta ^{-1}$}{$\zeta $}{$\zeta ^{-1}$}%
{$-\zeta ^{-1}$}{$\zeta ^{-1}$}{$-1$} & &\\
  & \Dchainfour{}{$-1$}{$\zeta $}{$-1$}{$\zeta ^{-1}$}{$\zeta $}%
{$\zeta $}{$-1$} & &\\
  & \Dchainfour{}{$-1$}{$\zeta $}{$-1$}{$\zeta ^{-1}$}{$-\zeta ^{-1}$}%
{$\zeta ^{-1}$}{$-1$} & &\\
  & \Dchainfour{}{$\zeta $}{$\zeta ^{-1}$}{$-1$}{$\zeta $}{$\zeta $}%
{$\zeta ^{-1}$}{$-1$} & & \\
  & \Dchainfour{}{$\zeta $}{$\zeta ^{-1}$}{$-1$}{$\zeta $}{$-1$}%
{$\zeta $}{$-1$} & &\\
  & \Drightofway{}{$\zeta $}{$\zeta ^{-1}$}{$\zeta $}{$\zeta ^{-1}$}%
{$\zeta ^{-1}$}{$-1$}{$\zeta ^{-1}$}{$\zeta $} & &\\
\hline
22 & \Dchainfour{}{$-\zeta $}{$\zeta $}{$-1$}{$-\zeta $}{$\zeta $}%
{$\zeta $}{$-\zeta $} & $\zeta \in G_4'$ &$p\not=2$ \\
 & \Dchainfour{}{$-1$}{$-\zeta $}{$-1$}{$\zeta $}{$-1$}{$\zeta $}{$-\zeta $}
& &\\
 & \Dchainfour{}{$-1$}{$\zeta $}{$-\zeta $}{$\zeta $}{$-1$}%
{$\zeta $}{$-\zeta $} & &\\
 & \Drightofway{}{$-1$}{$-\zeta $}{$\zeta $}{$-\zeta $}%
{$-1$}{$-1$}{$-\zeta $}{$-1$}\ \quad \quad
 \Drightofway{}{$-1$}{$\zeta $}{$-1$}{$-\zeta $}%
{$-1$}{$-1$}{$-\zeta $}{$-1$} & &\\
& \Dchainfour{}{$-1$}{$-\zeta $}{$\zeta $}{$-1$}{$-1$}%
{$\zeta $}{$-\zeta $} & &\\
& \Dchainfour{}{$-1$}{$\zeta $}{$-1$}{$-1$}{$-1$}{$\zeta $}{$-\zeta $} & &\\
& \Drightofway{}{$-\zeta $}{$\zeta $}{$-1$}{$-\zeta $}%
{$-1$}{$\zeta $}{$-\zeta $}{$-1$} & & \\
\hline
\end{tabular}
\caption{generalized Dynkin diagrams in all positive characteristic $p>0$}
\label{tab.1}
\end{table}

%%%%%%%%%%%%%%%%%%%%%%%%%%%%%%%%%%%%%%%%%%%%%%%%%%%%%%%%%%%%%%%%%%%%%%%%%%%%%%%%%%%%%%%%%%%%%%%%%%%%%%%%%%%%%%%%%
%%%%%%exchange graph of the small Cartan graph%%%%%%%%%%%%%%
%%%%%%%%%%%%%%%%%%%%%%%%%%%%%%%%%%%%%%%%%%%%%%%%%%%%%%%%%%%%%%%%%%%%%%%%%%%%%%%%%%%%%%%%%%%%%%%%%%%%%%%%%%%%%%%%%
%%%%%%%%%%%%%%%%%%%%%%%%%%%%%%%%%%%%%%%%%%%%%%%%%%%%%%%%%%%%%%%%%%%%%%%%%%%%%%%%%%%%%%%%%%%%%%%%%%%%%%%%%%%%%%%%%
%%%%%%%%%%%%%%%%%%%%%%%%%%%%%%%%%%%%%%%%%%%%%%%%%%%%%%%%%%%%%%%%%%%%%%%%%%%%%%%%%%%%%%%%%%%%%%%%%%%%%%%%%%%%%%%%%
\setlength{\unitlength}{1mm}
\settowidth{\mpb}{$q_0\in k^\ast \setminus \{-1,1\}$,}
\rule[-3\unitlength]{0pt}{8\unitlength}
\begin{table}
\centering
\begin{tabular}{r|p{8.8cm}|l|l|}
 & \text{exchange graph}  &\text{row}&\text{char} $\Bbbk$\\
\hline \hline
 1 &
 \begin{picture}(2,2)
 \put(1,0){\scriptsize{$\cD_{11}$}}
 \end{picture}
&$1$ & $p>0$\\
\hline
 2 &\begin{picture}(2,4)
 \put(1,0){\scriptsize{$\cD_{21}$}}
 \end{picture}
& $2$ &$p>0$  \\
\hline
 3 &\begin{picture}(2,4)
 \put(1,0){\scriptsize{$\cD_{31}$}}
 \end{picture}
&$3$ &  $p>0$\\
\hline
4 &\begin{picture}(2,4)
 \put(1,0){\scriptsize{$\cD_{41}$}}
 \end{picture}
&$4$ &  $p>0$\\
\hline
 5 &\begin{picture}(2,4)
 \put(1,0){\scriptsize{$\cD_{51}$}}
 \end{picture}
&$5$ &  $p>0$\\
\hline
 6 &\begin{picture}(57,4.5)
 \put(1,0){\scriptsize{$\cD_{61}$}}
 \put(6,1){\line(1,0){7}}
 \put(9,2){\scriptsize{$1$}}
 \put(13,0){\scriptsize{$\cD_{62}$}}
 \put(19,1){\line(1,0){7}}
 \put(21,2){\scriptsize{$2$}}
 \put(26,0){\scriptsize{$\cD_{63}$}}
 \put(32,1){\line(1,0){6}}
 \put(34,2){\scriptsize{$3$}}
 \put(38,0){\scriptsize{$\tau_{4321} \cD_{62}$}}
 \put(50,1){\line(1,0){6}}
 \put(52,2){\scriptsize{$4$}}
 \put(56,0){\scriptsize{$\tau_{4321} \cD_{61}$}}
 \end{picture}
 &  $6$ &$p>0$\\
 \hline
 7  &\begin{picture}(60,4.5)
 \put(1,0){\scriptsize{$\cD_{71}$}}
 \put(6,1){\line(1,0){7}}
 \put(9,2){\scriptsize{$1$}}
 \put(13,0){\scriptsize{$\cD_{72}$}}
 \put(19,1){\line(1,0){7}}
 \put(21,2){\scriptsize{$2$}}
 \put(26,0){\scriptsize{$\cD_{73}$}}
 \put(32,1){\line(1,0){7}}
 \put(35,2){\scriptsize{$3$}}
 \put(39,0){\scriptsize{$\cD_{74}$}}
 \end{picture}
 & $7$ &$p>0$ \\
 \hline
 8 &
 \begin{picture}(60,15)
 \put(1,10){\scriptsize{$\cD_{81}$}}
 \put(6,11){\line(1,0){7}}
 \put(9,12){\scriptsize{$1$}}
 \put(13,10){\scriptsize{$\cD_{82}$}}
 \put(18,11){\line(1,0){6}}
 \put(21,12){\scriptsize{$2$}}
 \put(25,10){\scriptsize{$\cD_{83}$}}
 \put(30,11){\line(1,0){6}}
 \put(32,12){\scriptsize{$3$}}
 \put(37,10){\scriptsize{$\cD_{84}$}}
 \put(42,11){\line(1,0){8}}
 \put(46,12){\scriptsize{$4$}}
 \put(51,10){\scriptsize{$\tau_{1243}\cD_{83}$}}
 %\put(35,8){\line(-2,-1){15}}
% \put(25,4){\scriptsize{$1$}}
 \put(51,0){\scriptsize{$\tau_{1243}\cD_{82}$}}
 \put(53,3){\line(0,1){5}}
 \put(55,4.5){\scriptsize{$2$}}
 \put(30,0){\scriptsize{$\tau_{1243}\cD_{81}$}}
 \put(43,1){\line(1,0){7}}
 \put(46,2){\scriptsize{$1$}}
 \end{picture}
 & $8$ &$p>0$\\
 \hline
9 &\begin{picture}(59,15)
 \put(1,10){\scriptsize{$\cD_{91}$}}
 \put(6,11){\line(1,0){7}}
 \put(9,12){\scriptsize{$4$}}
 \put(13,10){\scriptsize{$\cD_{92}$}}
 \put(18,11){\line(1,0){7}}
 \put(21,12){\scriptsize{$3$}}
 \put(26,10){\scriptsize{$\cD_{93}$}}
 \put(31,11){\line(1,0){7}}
 \put(34,12){\scriptsize{$2$}}
 \put(39,10){\scriptsize{$\tau_{3214}\cD_{96}$}}
 \put(51,11){\line(1,0){7}}
 \put(54,12){\scriptsize{$1$}}
 \put(59,10){\scriptsize{$\tau_{3214}\cD_{94}$}}
 \put(39,0){\scriptsize{$\tau_{3241} \cD_{95}$}}
 \put(42,3){\line(0,1){5}}
 \put(44,4.5){\scriptsize{$4$}}
 \end{picture}
 & $9$ &$p>0$\\
 \hline
10  &\begin{picture}(59,35)
 \put(1,30){\scriptsize{$\cD_{10,1}$}}
 \put(8,31){\line(1,0){7}}
 \put(11,32){\scriptsize{$2$}}
 \put(15,30){\scriptsize{$\cD_{10,2}$}}
 \put(22.5,31){\line(1,0){7}}
 \put(25.5,32){\scriptsize{$3$}}
 \put(30,30){\scriptsize{$\cD_{10,4}$}}
 \put(37.5,31){\line(1,0){8}}
 \put(41.5,32){\scriptsize{$4$}}
 \put(47,30){\scriptsize{$\cD_{10,5}$}}
%3%
 \put(15,20){\scriptsize{$\cD_{10,3}$}}
 \put(22.5,21){\line(1,0){7}}
 \put(25.5,22){\scriptsize{$3$}}
 \put(30,20){\scriptsize{$\cD_{10,6}$}}
 \put(37.5,21){\line(1,0){7}}
 \put(40.5,22){\scriptsize{$4$}}
 \put(45.5,20){\scriptsize{$\tau_{4321}\cD_{10,4}$}}
%3 und 4%
 \put(18,23){\line(0,1){5}}
 \put(19.5,24.5){\scriptsize{$1$}}
 \put(32.5,23){\line(0,1){5}}
 \put(33.5,24.5){\scriptsize{$1$}}
 \put(50,23){\line(0,1){5}}
 \put(51,24.5){\scriptsize{$1$}}
%2%
 \put(23,10){\scriptsize{$\tau_{3214}\cD_{10,3}$}}
 \put(37.5,11){\line(1,0){7}}
 \put(40.5,12){\scriptsize{$4$}}
 \put(45.5,10){\scriptsize{$\tau_{4321} \cD_{10,2}$}}
%2 und 3%
 \put(32.5,13){\line(0,1){5}}
 \put(33.5,14.5){\scriptsize{$2$}}
 \put(50,13){\line(0,1){5}}
 \put(51,14.5){\scriptsize{$2$}}
%1%
 \put(45.5,0){\scriptsize{$\tau_{4321} \cD_{10,1}$}}
 \put(50,3){\line(0,1){5}}
 \put(51,4.5){\scriptsize{$3$}}
 \end{picture}
 & $10$ &$p>0$\\
 \hline
11  &\begin{picture}(59,15)
 \put(1,10){\scriptsize{$\cD_{11,1}$}}
 \put(8,11){\line(1,0){7}}
 \put(11,12){\scriptsize{$2$}}
 \put(15,10){\scriptsize{$\cD_{11,2}$}}
 \put(22,11){\line(1,0){7}}
 \put(25,12){\scriptsize{$1$}}
 \put(29,10){\scriptsize{$\cD_{11,3}$}}
 %first layer
 \put(15,0){\scriptsize{$\cD_{11,4}$}}
 \put(17,3){\line(0,1){6}}
 \put(15,5){\scriptsize{$3$}}
 \put(29,0){\scriptsize{$\cD_{11,5}$}}
 \put(32,3){\line(0,1){6}}
 \put(30,5){\scriptsize{$3$}}
 \put(22,1){\line(1,0){7}}
 \put(25,2){\scriptsize{$1$}}
 \put(43,0){\scriptsize{$\cD_{11,6}$}}
 \put(36,1){\line(1,0){7}}
 \put(39,2){\scriptsize{$2$}}
 \end{picture}
 & $11$ &$p>0$\\
 \hline
12 &\begin{picture}(50,25)
 %3%
 \put(10,20){\scriptsize{$\cD_{12,1}$}}
 \put(17.5,21){\line(1,0){7}}
 \put(20.5,22){\scriptsize{$2$}}
 \put(25,20){\scriptsize{$\cD_{12,2}$}}
 \put(33,21){\line(1,0){12}}
 \put(38.5,22){\scriptsize{$1$}}
 \put(47.5,20){\scriptsize{$\cD_{12,3}$}}
%2->3%
 \put(28,13){\line(0,1){5}}
 \put(29,14.5){\scriptsize{$3$}}
 \put(50,13){\line(0,1){5}}
 \put(51,14.5){\scriptsize{$3$}}
 %2%
 \put(25,10){\scriptsize{$\cD_{12,4}$}}
 \put(33,11){\line(1,0){12}}
 \put(38.5,12){\scriptsize{$1$}}
 \put(47.5,10){\scriptsize{$\cD_{12,6}$}}
%1%
 \put(1,0){\scriptsize{$\tau_{1243}\cD_{12,1}$}}
 \put(14.5,1){\line(1,0){7}}
 \put(17.5,2){\scriptsize{$2$}}
 \put(23,0){\scriptsize{$\tau_{1243}\cD_{12,2}$}}
 \put(37.5,1){\line(1,0){7}}
 \put(40.5,2){\scriptsize{$1$}}
 \put(45.5,0){\scriptsize{$\tau_{1243} \cD_{12,3}$}}
%1->2%
 \put(28,3){\line(0,1){5}}
 \put(29,4.5){\scriptsize{$4$}}
 \put(50,3){\line(0,1){5}}
 \put(51,4.5){\scriptsize{$4$}}
 \end{picture}
 & $12$ &$p>0$\\
 \hline
%$12'$ &\begin{picture}(60,4)
% \put(1,0){\scriptsize{$\cD_{12',1}$}}
%\end{picture}
% &$12$ &$p=3$\\
% \hline
13&\begin{picture}(59,15)
 \put(1,10){\scriptsize{$\cD_{13,1}$}}
 \put(8,11){\line(1,0){7}}
 \put(11,12){\scriptsize{$3$}}
 \put(15,10){\scriptsize{$\cD_{13,2}$}}
 \put(22,11){\line(1,0){7}}
 \put(25,12){\scriptsize{$2$}}
 \put(29,10){\scriptsize{$\cD_{13,4}$}}
 \put(36,11){\line(1,0){7}}
 \put(39,12){\scriptsize{$1$}}
 \put(43,10){\scriptsize{$\cD_{13,3}$}}
 %first layer
 \put(10,0){\scriptsize{$\tau_{1243}\cD_{11,1}$}}
 \put(17,3){\line(0,1){6}}
 \put(15,5){\scriptsize{$4$}}
 \end{picture}
 & $13$ &$p>0$\\
 \hline
14 &\begin{picture}(75,15)
 %2%
 \put(10,10){\scriptsize{$\cD_{14,1}$}}
 \put(17.5,11){\line(1,0){7}}
 \put(20.5,12){\scriptsize{$3$}}
 \put(25,10){\scriptsize{$\cD_{14,2}$}}
 \put(33,11){\line(1,0){11}}
 \put(38.5,12){\scriptsize{$4$}}
 \put(45.5,10){\scriptsize{$\tau_{1243}\cD_{14,3}$}} \put(59.5,11){\line(1,0){7}}
 \put(62.5,12){\scriptsize{$2$}}
 \put(67.5,10){\scriptsize{$\tau_{3412}\cD_{14,5}$}}
%1%
 \put(1,0){\scriptsize{$\tau_{3214}\cD_{14,1}$}}
 \put(14.5,1){\line(1,0){7}}
 \put(17.5,2){\scriptsize{$1$}}
 \put(23,0){\scriptsize{$\tau_{3214}\cD_{14,2}$}}
 \put(37.5,1){\line(1,0){7}}
 \put(40.5,2){\scriptsize{$4$}}
 \put(45.5,0){\scriptsize{$\tau_{3241} \cD_{14,3}$}} \put(59.5,1){\line(1,0){7}}
 \put(62.5,2){\scriptsize{$2$}}
 \put(67.5,0){\scriptsize{$\tau_{1432} \cD_{14,5}$}}
%1->2%
 \put(28,3){\line(0,1){5}}
 \put(29,4.5){\scriptsize{$2$}}
 \end{picture}
 & $14$ &$p\not=2$\\
 \hline
15  &\begin{picture}(2,4)
 \put(1,0){\scriptsize{$\cD_{15,1}$}}
 \end{picture}
 & $15$ &$p\not=2,3$\\
 \hline
$15'$  &\begin{picture}(2,4)
 \put(1,0){\scriptsize{$\cD_{15',1}$}}
 \end{picture}
 & $15$ &$p=3$\\
 \hline
16 &
 \begin{picture}(60,5)
 \put(1,0){\scriptsize{$\cD_{16,1}$}}
 \put(9,1){\line(1,0){7}}
 \put(11,2){\scriptsize{$1$}}
 \put(17,0){\scriptsize{$\cD_{16,2}$}}
 \put(25,1){\line(1,0){7}}
 \put(29,2){\scriptsize{$2$}}
 \put(34,0){\scriptsize{$\cD_{16,3}$}}
 \put(42,1){\line(1,0){7}}
 \put(44,2){\scriptsize{$3$}}
 \put(49,0){\scriptsize{$\cD_{16,4}$}}
 \end{picture}
 & $16$ &$p\not=2,3$\\
 \hline
\end{tabular}
%\caption{The exchange graphs of $\cC(M)$ in Theorem~\ref{Theo:clasi}(continued).}
%\label{tab.4}
\end{table}

\setlength{\unitlength}{1mm}
\settowidth{\mpb}{$q_0\in k^\ast \setminus \{-1,1\}$,}
\rule[-3\unitlength]{0pt}{8\unitlength}
\begin{table}
\centering
\begin{tabular}{r|p{10.5cm}|l|l|}
(continued) & \text{exchange graph}  &\text{row}&\text{char} $\Bbbk$\\
\hline \hline
17 &\begin{picture}(50,25)
 %3%
 \put(10,20){\scriptsize{$\cD_{17,2}$}}
 \put(17.5,21){\line(1,0){7}}
 \put(20.5,22){\scriptsize{$3$}}
 \put(25,20){\scriptsize{$\cD_{17,3}$}}
 \put(33,21){\line(1,0){11}}
 \put(38,22){\scriptsize{$2$}}
 \put(45.5,20){\scriptsize{$\tau_{3214}\cD_{17,6}$}}
 \put(60,21){\line(1,0){7}}
 \put(63,22){\scriptsize{$1$}}
 \put(68.5,20){\scriptsize{$\tau_{3214}\cD_{17,5}$}}
%2->3%
 \put(13,13){\line(0,1){5}}
 \put(15,14.5){\scriptsize{$4$}}
 \put(28,13){\line(0,1){5}}
 \put(29,14.5){\scriptsize{$4$}}
 \put(50,13){\line(0,1){5}}
 \put(51,14.5){\scriptsize{$4$}}
 %2%
 \put(10,10){\scriptsize{$\cD_{17,1}$}}
 \put(23,10){\scriptsize{$\tau_{3421}\cD_{17,4}$}}
 \put(45.5,10){\scriptsize{$\tau_{3214}\cD_{17,4}$}}
 \put(68.5,10){\scriptsize{$\tau_{1432}\cD_{17,1}$}}
%1%
 \put(1,0){\scriptsize{$\tau_{3412}\cD_{17,5}$}}
 \put(15,1){\line(1,0){7}}
 \put(17.5,2){\scriptsize{$1$}}
 \put(23,0){\scriptsize{$\tau_{3412}\cD_{17,6}$}}
 \put(37.5,1){\line(1,0){7}}
 \put(40.5,2){\scriptsize{$4$}}
 \put(45.5,0){\scriptsize{$\tau_{1432} \cD_{17,3}$}}
 \put(60,1){\line(1,0){7}}
 \put(63,2){\scriptsize{$3$}}
 \put(68.5,0){\scriptsize{$\tau_{1432} \cD_{17,2}$}}
%1->2%
 \put(28,3){\line(0,1){5}}
 \put(29,4.5){\scriptsize{$2$}}
 \put(50,3){\line(0,1){5}}
 \put(51,4.5){\scriptsize{$2$}}
 \put(73.5,3){\line(0,1){5}}
 \put(75,4.5){\scriptsize{$2$}}
  \end{picture}
 & $17$ &$p\not=2,3$\\
 \hline
18 &\begin{picture}(80,25)
 %3%
 \put(10,20){\scriptsize{$\cD_{18,1}$}}
 \put(17.5,21){\line(1,0){7}}
 \put(20.5,22){\scriptsize{$4$}}
 \put(25,20){\scriptsize{$\cD_{18,2}$}}
 \put(33,21){\line(1,0){14}}
 \put(40,22){\scriptsize{$3$}}
 \put(47.5,20){\scriptsize{$\cD_{18,3}$}}
 \put(55,21){\line(1,0){7}}
 \put(58,22){\scriptsize{$2$}}
 \put(63.5,20){\scriptsize{$\tau_{3214}\cD_{18,5}$}}
%2->3%
 \put(73.5,13){\line(0,1){5}}
 \put(74.5,14.5){\scriptsize{$4$}}
 \put(58,13){\line(2,1){10}}
 \put(65,14){\scriptsize{$1$}}
 %2%
 \put(50,10){\scriptsize{$\tau_{3214}\cD_{18,6}$}}
 \put(68.5,10){\scriptsize{$\tau_{3214}\cD_{18,4}$}}
%1%
 \put(1,0){\scriptsize{$\tau_{3241}\cD_{18,1}$}}
 \put(15.5,1){\line(1,0){7}}
 \put(18.5,1.5){\scriptsize{$1$}}
 \put(23,0){\scriptsize{$\tau_{3241}\cD_{18,2}$}}
 \put(37.5,1){\line(1,0){7}}
 \put(40.5,1.5){\scriptsize{$4$}}
 \put(45.5,0){\scriptsize{$\tau_{3241} \cD_{18,3}$}}
 \put(60,1){\line(1,0){7}}
 \put(63,1.5){\scriptsize{$2$}}
 \put(68.5,0){\scriptsize{$\tau_{3241} \cD_{18,5}$}}
%1->2%
% \put(28,3){\line(0,1){5}}
% \put(29.5,4.2){\scriptsize{$1$}}
 \put(68.5,3.5){\line(-2,1){11}}
 \put(65,5.5){\scriptsize{$4$}}
 \put(73.5,3){\line(0,1){5}}
 \put(75,4.5){\scriptsize{$1$}}
  \end{picture}
 & $18$ &$p\not=3$\\
 \hline
19  &\begin{picture}(59,15)
 \put(1,10){\scriptsize{$\cD_{19,1}$}}
 \put(8,11){\line(1,0){7}}
 \put(11,12){\scriptsize{$2$}}
 \put(15,10){\scriptsize{$\cD_{19,2}$}}
 \put(22,11){\line(1,0){7}}
 \put(25,12){\scriptsize{$3$}}
 \put(29,10){\scriptsize{$\cD_{19,4}$}}
 \put(36,11){\line(1,0){7}}
 \put(39,12){\scriptsize{$2$}}
 \put(43,10){\scriptsize{$\cD_{19,6}$}}
 %first layer
 \put(15,0){\scriptsize{$\cD_{19,3}$}}
 \put(17,3){\line(0,1){6}}
 \put(15,5){\scriptsize{$1$}}
 \put(29,0){\scriptsize{$\cD_{19,5}$}}
 \put(32,3){\line(0,1){6}}
 \put(30,5){\scriptsize{$1$}}
 \put(22,1){\line(1,0){7}}
 \put(25,2){\scriptsize{$3$}}
 \end{picture}
 & $19$ &$p\not=2,3$\\
 \hline
20  &\begin{picture}(59,35)
 \put(1,30){\scriptsize{$\cD_{20,1}$}}
 \put(8,31){\line(1,0){7}}
 \put(11,32){\scriptsize{$4$}}
 \put(15,30){\scriptsize{$\cD_{20,2}$}}
 \put(22.5,31){\line(1,0){7}}
 \put(25.5,32){\scriptsize{$2$}}
 \put(30,30){\scriptsize{$\cD_{20,5}$}}
 \put(37.5,31){\line(1,0){8}}
 \put(41.5,32){\scriptsize{$1$}}
 \put(47,30){\scriptsize{$\cD_{20,6}$}}
%3%
 \put(30,20){\scriptsize{$\cD_{20,3}$}}
 \put(38,21){\line(1,0){8}}
 \put(41.5,22){\scriptsize{$1$}}
 \put(47,20){\scriptsize{$\cD_{20,4}$}}
%3 und 4%
 \put(30,22){\line(-1,1){7}}
 \put(24,23){\scriptsize{$2$}}
 \put(32.5,23){\line(0,1){5}}
 \put(33.5,24.5){\scriptsize{$4$}}
 \put(50,23){\line(0,1){5}}
 \put(51,24.5){\scriptsize{$4$}}
%2%
 \put(30,10){\scriptsize{$\cD_{20,7}$}}
 \put(37.5,11){\line(1,0){8}}
 \put(40.5,12){\scriptsize{$1$}}
 \put(47,10){\scriptsize{$\cD_{20,9}$}}
%2 und 3%
 \put(32.5,13){\line(0,1){5}}
 \put(33.5,14.5){\scriptsize{$3$}}
 \put(50,13){\line(0,1){5}}
 \put(51,14.5){\scriptsize{$3$}}
%1%
 \put(28,0){\scriptsize{$\cD_{20,10}$}}
 \put(37.5,1){\line(1,0){8}}
 \put(40.5,2){\scriptsize{$4$}}
 \put(47,0){\scriptsize{$\cD_{20,8}$}}
 \put(50,3){\line(0,1){5}}
 \put(51,4.5){\scriptsize{$2$}}
 \end{picture}
 & $20$ &$p\not=3$\\
 \hline
21  &\begin{picture}(59,15)
 \put(1,10){\scriptsize{$\cD_{21,1}$}}
 \put(8,11){\line(1,0){7}}
 \put(11,12){\scriptsize{$1$}}
 \put(15,10){\scriptsize{$\cD_{21,3}$}}
 \put(22,11){\line(1,0){7}}
 \put(25,12){\scriptsize{$2$}}
 \put(29,10){\scriptsize{$\cD_{21,6}$}}
 \put(36,11){\line(1,0){7}}
 \put(39,12){\scriptsize{$3$}}
 \put(43,10){\scriptsize{$\cD_{21,7}$}}
 %first layer
 \put(1,0){\scriptsize{$\cD_{21,2}$}}
 \put(8,1){\line(1,0){7}}
 \put(11,2){\scriptsize{$1$}}
 \put(3,3){\line(0,1){6}}
 \put(1,5){\scriptsize{$4$}}
 \put(15,0){\scriptsize{$\cD_{21,4}$}}
 \put(17,3){\line(0,1){6}}
 \put(15,5){\scriptsize{$4$}}
 \put(29,0){\scriptsize{$\cD_{21,5}$}}
 \put(32,3){\line(0,1){6}}
 \put(30,5){\scriptsize{$4$}}
 \put(22,1){\line(1,0){7}}
 \put(25,2){\scriptsize{$2$}}
 \end{picture}
 & $21$ &$p\not=3$\\
 \hline
22  &\begin{picture}(92,45)
%5%
 \put(1,40){\scriptsize{$\cD_{22,1}$}}
 \put(8,41){\line(1,0){7}}
 \put(11,42){\scriptsize{$2$}}
 \put(15,40){\scriptsize{$\cD_{22,2}$}}
 \put(22.5,41){\line(1,0){7}}
 \put(25.5,42){\scriptsize{$3$}}
 \put(30,40){\scriptsize{$\cD_{22,4}$}}
 \put(37.5,41){\line(1,0){7}}
 \put(40.5,42){\scriptsize{$4$}}
 \put(45.5,40){\scriptsize{$\tau_{1243}\cD_{22,5}$}}
%4%
 \put(15,30){\scriptsize{$\cD_{22,3}$}}
 \put(22.5,31){\line(1,0){7}}
 \put(25.5,32){\scriptsize{$3$}}
 \put(30,30){\scriptsize{$\cD_{22,8}$}}
 \put(37.5,31){\line(1,0){7}}
 \put(40.5,32){\scriptsize{$4$}}
 \put(45.5,30){\scriptsize{$\tau_{1243}\cD_{22,6}$}}
%4 und 5%
 \put(18,33){\line(0,1){5}}
 \put(19.5,34.5){\scriptsize{$1$}}
 \put(32.5,33){\line(0,1){5}}
 \put(33.5,34.5){\scriptsize{$1$}}
 \put(50,33){\line(0,1){5}}
 \put(51,34.5){\scriptsize{$1$}}
%3 und 4%
 \put(32.5,23){\line(0,1){5}}
 \put(33.5,24.5){\scriptsize{$2$}}
 \put(50,23){\line(0,1){5}}
 \put(51,24.5){\scriptsize{$2$}}
%3%
 \put(23,20){\scriptsize{$\tau_{3214}\cD_{22,7}$}}
% \put(37.5,21){\line(1,0){7}}
% \put(40.5,22){\scriptsize{$4$}}
 \put(45.5,20){\scriptsize{$\tau_{3412} \cD_{22,7}$}}
%2 und 3%
 \put(32.5,13){\line(0,1){5}}
 \put(33.5,14.5){\scriptsize{$4$}}
 \put(50,13){\line(0,1){5}}
 \put(51,14.5){\scriptsize{$4$}}
%2%
 \put(23,10){\scriptsize{$\tau_{1423}\cD_{22,6}$}}
 \put(37.5,11){\line(1,0){7}}
 \put(40.5,11.5){\scriptsize{$2$}}
 \put(45.5,10){\scriptsize{$\tau_{1432} \cD_{22,8}$}}
 \put(60,11){\line(1,0){7}}
 \put(63,11.5){\scriptsize{$3$}}
 \put(68.5,10){\scriptsize{$\tau_{1432} \cD_{22,3}$}}
%1->2%
 \put(32.5,3){\line(0,1){5}}
 \put(33.5,4.5){\scriptsize{$1$}}
 \put(50,3){\line(0,1){5}}
 \put(51,4.5){\scriptsize{$1$}}
 \put(75,3){\line(0,1){5}}
 \put(76,4.5){\scriptsize{$1$}}
%1%
 \put(23,0){\scriptsize{$\tau_{1423}\cD_{22,5}$}}
 \put(37.5,1){\line(1,0){7}}
 \put(40.5,1.5){\scriptsize{$2$}}
 \put(45.5,0){\scriptsize{$\tau_{1432} \cD_{22,4}$}}
 \put(60,1){\line(1,0){7}}
 \put(63,1.5){\scriptsize{$3$}}
 \put(68.5,0){\scriptsize{$\tau_{1432} \cD_{22,2}$}}
 \put(83,1){\line(1,0){7}}
 \put(86,1.5){\scriptsize{$4$}}
 \put(91,0){\scriptsize{$\tau_{1432} \cD_{22,1}$}}
 \end{picture}
 & $22$ &$p\not=2$\\
 \hline
\end{tabular}
\caption{The exchange graphs of $\cC(M)$ in Theorem~\ref{theo:clasi}.}
\label{tab.2}
\end{table}

\newpage

\end{document}